\documentclass[10pt,a4paper,twoside]{article}

\usepackage{amsmath, amssymb, amsfonts, amsthm}
\usepackage{mathrsfs}

\usepackage{tikz}
\usetikzlibrary{arrows,calc,intersections}
%calc pour le calcul de coordonnees
%patterns pour les hachures
\usetikzlibrary{calc,patterns,matrix}
\usetikzlibrary{decorations}
%\usetikzlibrary{decorations.pathmorphing}
\usetikzlibrary{decorations.pathreplacing}
%\usetikzlibrary{decorations.shapes}
%\usetikzlibrary{decorations.text}
\usetikzlibrary{decorations.markings}
%\usetikzlibrary{decorations.fractals}
%\usetikzlibrary{decorations.footprints}
\usetikzlibrary{plotmarks}
\usetikzlibrary{shapes.misc}
\usetikzlibrary{shapes,snakes}

\usepackage{listings}

%\usepackage{myCommands}
%\usepackage[french]{babel}

%\DeclareFontFamily{U}{wncy}{}
%\DeclareFontShape{U}{wncy}{m}{n}{<->wncyr10}{}
%\DeclareSymbolFont{mcy}{U}{wncy}{m}{n}
%\DeclareMathSymbol{\sha}{\mathord}{mcy}{"58}

\usepackage[plainpages=false,pdfpagelabels,naturalnames=true]{hyperref}
\hypersetup{colorlinks=true,linkcolor=red,pdfpagemode=None,pdfstartview=FitH}
%\hypersetup{colorlinks=true,linkcolor=blue,pdfstartview=FitH}

%\usepackage{makeidx}
%\usepackage{index}
%\makeindex
%\newindex{notations}{notx}{notd}{Notations}

%% Les dimensions de la page %%%%%%%%%%%%%%%%%%%%%%%%%%%%%%%%%%%%%%%%%%%%%%%%
\usepackage{geometry}
\geometry{%
a4paper,
body={173mm,260mm},
left=20mm,top=25mm,
headheight=10mm,headsep=4mm,
marginparsep=0mm,%4mm,
marginparwidth=0mm,%27mm
}

%% Les sections %%%%%%%%%%%%%%%%%%%%%%%%%%%%%%%%%%%%%%%%%%%%%%%%%%%%%%%%%%%%%
%%\renewcommand{\thechapter}{\arabic{chapter}} %{\Roman{chapter}}
%%\renewcommand{\thesection}{\arabic{section}} %{\Roman{section}}
%%\setcounter{chapter}{-1}

%% L'environnement theorem... %%%%%%%%%%%%%%%%%%%%%%%%%%%%%%%%%%%%%%%%%%%%%%%
%Numerotation par section avec un meme compteur pour tous les types d'enonces.
\newtheorem{theo}{Theorem}[section]

\newtheorem{defi}[theo]{Definition}

\newtheorem{prop}[theo]{Proposition}

%\newenvironment{proof}{\medbreak \noindent {\bf Proof~---}}
%                        {\hfill $\square$ \medbreak}

%% Les listes %%%%%%%%%%%%%%%%%%%%%%%%%%%%%%%%%%%%%%%%%%%%%%%%%%%%%%%%%%%%%%%

%%\renewcommand{\labelitemii}{---}
%%\renewcommand{\labelitemiii}{votre-label-pour-le-niveau-iii}
%%\renewcommand{\labelitemiv}{votre-label-pour-le-niveau-iv}

\DeclareMathOperator{\Gal}{Gal}

\DeclareMathOperator{\Hom}{Hom}

\DeclareMathOperator{\Cl}{Cl}

\DeclareMathOperator{\Mat}{Mat}
\DeclareMathOperator{\Diag}{Diag}

\def\FF{\mathbb{F}}

\def\F{\mathbf{F}}
\def\P{\mathbf{P}}

\def\E{\mathbf{E}}
\def\Z{\mathbf{Z}}
\def\A{\mathbf{A}}
\def\D{\mathbf{D}}

\def\Ccal{\mathcal{C}}

\def\EE{\mathcal{E}}
\def\Fcal{\mathcal{F}}
\def\Rcal{\mathcal{R}}
\def\O{\mathcal{O}}
\def\Pcal{\mathcal{P}}
\def\L{\mathcal{L}}

\def\RR{\mathscr{R}}

\def\plongement{\varphi}

\def\Weil{q+1+\lfloor 2\sqrt{q}\rfloor}
\newcommand{\Nqg}[2]{\operatorname{N}_{#1}\left(#2\right)}
\def\contraction{\chi}
\def\eclatement{\pi}

\DeclareMathOperator{\divcartier}{CDiv}
\DeclareMathOperator{\divweil}{WDiv}
\DeclareMathOperator{\cldivcartier}{CaCl}
\DeclareMathOperator{\cldivweil}{Cl}
\newcommand{\dmin}{d_{\text{min}}}

\newcounter{remark}
\newcommand{\remark}{\addtocounter{remark}{1}
                       \par \quad {\bf \arabic{remark}}.\,
                      }

%%Pour les remarques constituees d'une unique remarque !!!

\author{Régis Blache and Emmanuel Hallouin}

\title{\bfseries Construction of good codes from weak Del Pezzo surfaces}
\date{\today}

\begin{document}

\maketitle

\tableofcontents

\section{Introduction}

The geometric construction of error correcting codes goes back to Reed-Solomon and Goppa for curves and to Reed-Muller for affine or
projective spaces. In this work, we focus on evaluation codes from algebraic surfaces whose construction works as follows.
Given~$X$ an algebraic surface over~$\F_q$ and~$D$ a (Cartier) divisor on~$X$, we denote by~$X(\F_q)$ the set of rational points of~$X$
and by~$H^0(X,D)$ the space of global sections of the divisor~$D$. The associated {\em evaluation code} is the code whose codewords are
the evaluations of the functions of~$H^0(X,D)$ at the points of~$X(\F_q)$ (see definition~\ref{defCode}). The length
of such a code is thus~$\#X(\F_q)$, the dimension is~$\dim\left(H^0(X,D)\right)$ (at least if the
evaluation map is injective) and the third invariant, the minimum distance, is more difficult to control. It is related to
the maximum of rational points that can contain a curve in the linear system~$\left|D\right|$
(proposition~\ref{propParameters}). The lower this maximum, the better is the minimum distance. 

The well known Reed-Muller code of degree~$d$ for the projective plane~$\P^2$ over~$\F_q$ is a nice example.
Its codewords are the evaluations of the homogeneous polynomials of degree~$d$ at the rational points of~$\P^2(\F_q)$.
In the geometric setting above, this is the {\em evaluation code}
associated to the algebraic surface~$\P^2$, the divisor~$d\ell$ where~$\ell$ denotes any line, and the whole set
of rational points~$\P^2(\F_q)$. Its parameters are well known when~$q > d$: the length
is the number of rational points~$\#\P^2(\F_q) = q^2+q+1$, the dimension is the dimension of the space of global
section~$\dim\left(H^0(\P^2,d\ell)\right) = \binom{d+2}{2}$, and the minimum distance is~$q(q-d+1)$. 
This minimum distance can be written~$(q^2+q+1) - (1+dq)$
and~$(1+dq)$ is nothing else than the maximal number of rational points of a curve lying in the linear
system~$\left| d\ell\right|$ (i.e. the set of plane curves of degree~$d$).
In fact, this is the number of points of the union of $d$ lines of~$\P^2$ meeting at one point.
The existence of this kind of extremely reducible curve over~$\F_q$ impacts negatively the minimum distance, since they contain
too many rational points.
Among other things, this is why evaluation codes associated to more general algebraic surfaces have been considered.

One can distinguish several strategies in the literature to get rid of reducible curves with many components in the linear system.
The idea of Couvreur \cite{Alain} is to work with sublinear systems of~$\P^2$ by adding constraints that remove the very reducible
sections. In fact by choosing carefully the sublinear system, this kind of sections is no longer defined over the base field,
but only over an extension. The number of rational points of irreducible curves that are not absolutely irreducible may fail drastically.
In the preceding example of the
$d$ intersecting lines, if they are not defined over~$\F_q$ but only conjugate over~$\F_q$, then one can easily convince ourselves that
their union only contains one rational point, their meeting point. Some other examples can be found in Edoukou \cite{Edoukou}
or Couvreur \& Duursma \cite{CouvreurDuursma}.

Following Zarzar \cite{zarzar}, another fairly repeated strategy is to concentrate on surfaces whose (arithmetic) Cartier class group is
free of rank~$1$. Indeed, this is a natural way to overcome the difficulty of the existence of (very) reducible sections in the linear
system~$|D|$.
Little and Schenck \cite{LittleSchenck} have studied anticanonical codes on degree~$3$ and~$4$ del Pezzo surfaces having rank~$1$.
In our previous work \cite{AntiCanonical}, we could say that we fill a gap in the study of algebraic
geometric codes constructed from del Pezzo surfaces of rank~$1$. Let us remark that, even if the elements of the linear
system~$\left|D\right|$ are all irreducible, some of them may be absolutely reducible. As in the example of conjugate lines,
it is expected that these configurations do not contain too many points but this requires a proof.
%In a previous work, the two authors plus some co-authors generalize Couvreur's approach by considering some smooth Del Pezzo surfaces. In these examples, some reducible sections disappear since some of their components are contracted. It is also natural to modify the set of evaluation points by getting rid of some and adding some other.

In this work, we continue the investigation of codes constructed from del Pezzo surfaces.
We do not restrict ourselves to rank one surfaces but above all we consider more general surfaces,
that is {\em non ordinary weak del Pezzo surfaces}. As ordinary del Pezzo surfaces, non ordinary weak del Pezzo surfaces admit a blowing-up description; in the ordinary case, the
points that are blown up are in {\em general position} but in the non ordinary case, they are only in {\em almost general position}
(three points can be colinear and six points can be conconic). The main consequences of these weaker hypotheses on the configuration
of points are twofold. First, the surface contains $-2$-curves (and not only $-1$-curves). Secondly, the anticanonical divisor is not ample
anymore but only big and nef and the anticanonical model is singular with rational double points.

In a concomitant work \cite{Classification}, we have computed explicit models for all the {\em arithmetic types}
of weak del Pezzo surfaces of degree at least~$3$ over a finite field (these types lead to a classification that is coarser than the isomorphism one
but that permits to distinguish the main arithmetic properties of the weak del Pezzo surfaces).
Taking advantage of this knowledge, we select eight types of (non ordinary) weak del Pezzo that are well suited for coding applications.
More precisely, we consider~$X$ a (smooth) weak del Pezzo surface of degree~$d$ over~$\F_q$ and we denote by~$X_s$ its (singular)
anticanonical model; this is the image of the surface~$X$ by the morphism~$\plongement$ associated to~$-K_X$ the anticanonical divisor
of~$X$. Since~$-K_X$ is not ample, the surface~$X_s$ is singular with a finite number of rational double points.
We study the evaluation code associated to the (singular) surface~$X_s$, the Cartier divisor~$-K_{X_s} = \plongement_*(-K_X)$
and the whole set of rational points of~$X_s$ (definition~\ref{defiCode}).
Except for small values of~$q$, this code has length~$n=\#X_s(\F_q)$, dimension~$k=d+1$.
The last invariant, the minimum distance~$\dmin$, is much more subtle to control and requires preparatory calculations.

Before going into details, let us discuss the advantages and disadvantages of considering such weak Del Pezzo surfaces.
In the process of construction of a del Pezzo surface, there are blowing-up and blowing-down. The blowing-up may add rational
points and thus may increase the length. The blowing-down permits to contract some lines and thus decreases the types
of reducible configurations.
Since the anticanonical model is no longer smooth, besides the exceptional curves some other curves, in fact the effective roots, can be
contracted. If these curves are components of the most reducible sections of the anticanonical divisor on the weak del Pezzo,
the parameters of the code could be improved. This is the positive aspect of considering
anticanonical model of weak Del Pezzo surfaces. But we should also mention a negative one:
because of the singularity of~$X_s$, the notions of Cartier and Weil divisors are not equivalent and this makes it difficult
to calculate the minimum distance~$\dmin$ as we will see below.
%Since the surface is not smooth the Cartier class group may differ from the Weil class group.
%For points counting in sections, it is the decomposition in the Weil class group that should be considered. So even if the Cartier class group is of rank one, some decomposition may appear.

The computation of~$\dmin$ reduces to compute the number:
$$
N_q\left(-K_{X_s}\right)
=
\max \left\{\#C(\F_q) \mid C\in \left|-K_{X_s}\right|\right\}.
$$
Since every curve of the linear system~$\left|-K_{X_s}\right|$ is of arithmetic genus~$1$ (adjunction formula),
all its absolutely irreducible curves have a number of rational points which is bounded above by the classic:
$$
N_q(1) = \max \left\{\#C(\F_q) \mid \text{$C$ absolutely irreducible, smooth, genus~$1$, curves  over~$\F_q$}\right\}.
$$
By the Weil-Serre bound, we know that~$N_q(1)\leq q+1 + \lfloor2\sqrt{q}\rfloor$; in fact, except for very special values of~$q$,
the Weil-Serre bound turns to be sharp:
$$
N_q(1)
=
\begin{cases}
q + \lfloor 2\sqrt{q}\rfloor &\text{if~$q= p^e$, $e\geq 5$, $e$ odd and~$p \mid \lfloor 2\sqrt{q}\rfloor$,}\\
\Weil & \text{otherwise}
\end{cases}
$$
(\cite[Chap~2, Th~6.3]{SerreRationalPoints}). Anyway, this bound does not permit
to control the number of rational points of reducible or absolutely reducible curves of~$\left|-K_{X_s}\right|$.
Due to the singularities of~$X_s$ or more specifically to the difference between the Cartier or Weil
divisors or class groups, the expectation that irreducible, but absolutely reducible curves in the linear system do not contain too many
points is more difficult to verify. Even if the Cartier class group~$\cldivcartier(X_s)$ is free of rank~$1$, generated by~$-K_{X_s}$,
this does not mean that the curves of
the linear system~$\left|-K_{X_s}\right|$ are all irreducible since they can decompose in the Weil class group~$\cldivweil(X_s)$,
that is into a sum of Weil irreducible divisors that are not Cartier divisors. To overcome this difficulty, we took fall advantage
of the fact that in the context of weak del Pezzo surfaces, explicit models of all the class groups can be computed.
This permits us to accurately measure the difference between the Cartier and the Weil divisors. This step uses some basic methods
on lattices computations. 
Then to explicitly compute the maximum~$N_q\left(-K_{X_s}\right)$, we list all the kinds of decompositions into irreducible components
that may appear in the linear system~$\left|-K_{X_s}\right|$. In general, this can be a difficult issue but in our context this task is
greatly facilitated by the fact that all the considered surfaces are blowing-up and down of the projective plane: as explained
in Hartshorne's classic \cite[Chap~V, beginning of~\S4 \& Remark~4.8.1]{Hartshorne}, we are brought back to the study of some sub-linear
systems of plane curves.

We choose examples that illustrate the variety of situations that may occur.
In the column~$\cldivcartier(X_s) \hookrightarrow \cldivweil(X)$ in the tabular below, we see that
the Cartier class group~$\cldivcartier(X_s)$ always embeds in
the Weil class group~$\cldivweil(X_s)$, and via this embedding~$\cldivcartier(X_s)$
may be equal to~$\cldivweil(X_s)$, or of finite index into~$\cldivweil(X_s)$, or of positive co-rank into~$\cldivweil(X_s)$.
Note also that the lattice~$\cldivcartier(X_s)$ is always free, whereas~$\cldivweil(X_s)$ may have a torsion subgroup.
In the column~$N_q\left(-K_{X_s}\right)$, one can see that this is not always the absolutely irreducible curves of the linear
system that contains the maximum of rational points. Only a case-by-case proof and a carefully study of all the geometric properties
permits to estimate the three invariants~$[n,k,\dmin]$ that are contained in the last column.
%Taking advantage of our fine knowledge of weak Del Pezzo surfaces, we investigate in this work the anticanonical codes that can be
%constructed using these surfaces. This kind of varieties permit to very precisely and explicitly describe the code since we are able to
%control most of needed invariants,
%such that the rational points, the Picard group and even the Weil class group in the singular case, the anticanonical divisor and 
%the curves in the associated linear system. Therefore for every example, one can give an explicit generator matrix for
%the codes.
%
%
%For each example, our starting point is a blowing-up model of the weak del Pezzo surface, then we study the
%parameters length, dimension, minimum distance ($[n,k,\dmin]_q$) of the associated anticanonical code and last we give a generator matrix
%(or a program to compute it).
$$
{\renewcommand{\arraystretch}{1.5}
\begin{array}{|c|l|l|l|l|l|}
\hline
&\text{Deg.} &\text{Sing.} & \cldivcartier(X_s) \hookrightarrow \cldivweil(X) & N_q\left(-K_{X_s}\right) &[n,k,\dmin]
\\
\hline
\S\ref{sDeg6A1} & 6 & \mathbf{A}_1 & 2\Z\hookrightarrow\Z&2q+1&\left[q^2+1,7,q^2-2q\right]
\\
\hline
\S\ref{sDeg52A1} & 5 & 2\mathbf{A}_1 &\Z\oplus 2\Z\hookrightarrow \Z\oplus\Z &2q+2& \left[q^2+q+1,6,q^2-q-1\right]
\\
\hline
\S\ref{sDeg4A1} & 4 & \mathbf{A}_1 & \Z\simeq\Z &\leq N_q(1)& \left[q^2-q+1,5,\geq q^2-q+1-N_q(1)\right]
\\
\hline
\S\ref{sDeg44A1} & 4 & 4\mathbf{A}_1 &\Z\hookrightarrow\Z\oplus\Z/2\Z &\leq N_q(1)& \left[q^2+1,5,\geq q^2+1-N_q(1)\right]
\\
\hline
\S\ref{sDeg4A2} & 4 & \mathbf{A}_2 &\Z\simeq\Z &\leq N_q(1)& \left[q^2+1,5,\geq q^2+1-N_q(1)\right]
\\
\hline
\S\ref{sDeg4D5} & 4 & \mathbf{D}_5 &4\Z\hookrightarrow\Z &2q+1& \left[q^2+q+1,5,q^2-q\right]
\\
\hline
\S\ref{sDeg3A1} & 3 & \mathbf{A}_1 &\Z\simeq\Z &\leq N_q(1)& \left[q^2+1,4,\geq q^2+1-N_q(1)\right]
\\
\hline
\S\ref{sDeg33A2} & 3 & 3\mathbf{A}_2 &\Z\hookrightarrow\Z\oplus\Z/3\Z  &\leq N_q(1)& \left[q^2+q+1,4,\geq q^2+q+1-N_q(1)\right]
\\
\hline
\end{array}}
$$
In the tabular above, the inequality~$N_q\left(-K_{X_s}\right) \leq N_q(1)$ means that the curves of the linear system~$\left|-K_{X_s}\right|$
that contain the maximum number of points are the absolutely irreducible ones. They are all of arithmetic genus~$1$, but it may happen that
none of these curves is maximal (i.e. has a number of rational points equal to~$N_q(1)$). This is why in these cases, one can only
give an upper bound for~$N_q\left(-K_{X_s}\right)$ and thus a lower bound for~$\dmin$. It turns out that we recover two examples
of Koshelev \cite{Koshelev} (\S\ref{sDeg44A1} and~\ref{sDeg33A2}), where he proved that the linear systems cannot contain a maximal
genus one curve for certain finite fields. This permits to
increase the lower bound of the minimum distance by one over these fields.

All the presented codes can be easily constructed using a mathematics software system. On the second
author's {\tt webpage}, we put a {\tt magma} program that permits to construct all our codes.

\section{Generalities on weak del Pezzo surfaces}

Let~$k$ be a finite field (most of the results remain true on any field), $\overline{k}$ its algebraic closure,
$\Gamma = \Gal(\overline{k}/k)$
its absolute Galois group and let~$\sigma$ be the Frobenius automorphism.

 In this section, we recall the classical properties of del Pezzo surfaces. In particular, we focus on the
specificities of {\em non ordinary weak} del Pezzo surfaces compared to the {\em ordinary ones}. The essential references for the content
of this section are the book of Manin~\cite{Manin} or the more recent one of Dolgachev~\cite[\S8]{Dolgachev}.

\subsection{Ordinary versus non ordinary weak del Pezzo surfaces}

There are several definitions of a del Pezzo surfaces, even in the Dolgachev's classic \cite{Dolgachev}; let us start with
the definition~8.1.18 of this book.

\begin{defi}
A smooth projective surface~$X$ is a {\bfseries weak del Pezzo} surface if its anticanonical divisor~$-K_X$ is:
\begin{enumerate}
\item big, which means that~$K_X^{\cdot 2} > 0$,
\item\label{item_nef} and nef, which means that~$(-K_X) \cdot D \geq 0$ for any effective divisor~$D$ on~$X$.
\end{enumerate}
The self-intersection~$K_X^{\cdot 2}$ is the {\bfseries degree} of the del Pezzo surface~$X$.
\end{defi}

Thanks to the Nakai-Moishezon criterion \cite[Chap~V, Theorem~1.10]{Hartshorne}, these kinds of surfaces are divided into
two cases:
\begin{enumerate}
\item either the inequalities in~$(\ref{item_nef})$ are all strict ($(-K_X) \cdot D > 0$):
the anticanonical divisor is thus {\bfseries ample} and we say that the del Pezzo surface is an {\bfseries ordinary} one;
\item or there exists an effective divisor~$D$ such that~$(-K_X)\cdot D = 0$:
the anticanonical divisor is {\bfseries not ample} and we say that the del Pezzo surface is a {\bfseries non ordinary} one.
\end{enumerate}
These properties have consequences on the {\bfseries negative curves} on~$X$, those whose self-intersection is negative.
Indeed, let~$C$ be an absolutely irreducible curve on~$X$ of
arithmetic genus~$\gamma(C)$. By adjunction formula, we know
that~$C^{\cdot 2} = 2\gamma(C)-2 + C\cdot(-K_X)$ and since~$\gamma(C)\geq 0$ and~$C\cdot(-K_X)\geq 0$, we deduce that~$C^{\cdot 2} \geq -2$.
Thus, negative curves on~$X$ have self-intersection~$-2$ or~$-1$. Moreover~$C^{\cdot 2} = -2$ if and only if~$\gamma(C) = 0$
and~$C\cdot(-K_X)=0$; this means that only non ordinary del Pezzo surfaces can contain $(-2)$-curves. We also prove the same way that
$(-1)$-curves on weak del Pezzo surfaces must have arithmetic genus equal to~$0$.
This motivates the following definition which deals with negative curves but also divisor classes of curves.

\begin{defi}
Let $X$ be a weak del Pezzo surface over a field~$k$, let~$\overline{X} = X\otimes\overline{k}$ be its extension
to the algebraic closure~$\overline{k}$ and let~$\cldivweil(\overline{X})$ denote the divisor class group of~$\overline{X}$.
\begin{enumerate}
\item A divisor class~$D\in \cldivweil(\overline{X})$ is an {\bfseries exceptional class} if~$D^{\cdot 2}=D\cdot K_X=-1$;
an absolutely irreducible curve~$C$ on $X$ whose class is exceptional is an {\bfseries exceptional curve}.
\item A divisor class~$D\in \cldivweil(\overline{X})$ is a {\bfseries root} if~$D^{\cdot 2}=-2$ and $D\cdot K_X=0$;
a curve~$C$ on $X$ whose class is a root is an {\bfseries effective root} and if such a curve is absolutely irreducible then~$C$
is called a \emph{$(-2)$-curve}.
\end{enumerate}
\end{defi}

It is well known that the geometry of weak del Pezzo surfaces depends to a large extent of these {\bfseries negative curves}.
For example,
if~$X$ is a weak ordinary del Pezzo surface then all the exceptional classes are the classes of a (unique) exceptional curve and no root
is effective. On the contrary, if~$X$ is weak non ordinary del Pezzo surface then some exceptional classes may be represented by reducible
curves and some roots are effective. These differences of behaviours appear naturally in the blowup description of the generalized
del Pezzo surfaces.

\subsection{The blow-up model}

%vocabulaire
%english Hartshorne p.29 : the blowing-up (nom l'éclatement), we blow up (verbe on éclate), Dolgachev blow-up model (adjectif éclaté) 

Over~$\overline{k}$, every del Pezzo surface can be obtained by a sequence of blowing ups starting from the projective plane~$\P^2$.
This description makes most of the invariants of the surface very explicit.

Recall that if~$\pi : Y \to X$ is the blowing up of a smooth surface~$X$ at a point~$p$, with exceptional divisor~$E$,
then~$\cldivweil(Y) = \pi^*\cldivweil(X) \oplus \Z E$, the intersection pairing on~$Y$
satisfying~$E^2 = -1$, $\pi^*D \cdot E = 0$ and~$\pi^* D \cdot \pi^* D' = D\cdot D'$ for all divisors~$D$ and~$D'$ of~$X$
(the blowing up is an isometry for the intersections pairings). Moreover~$K_{Y} = \pi^* K_X + E$.

We recall that~$r\leq 8$ points in~$\P^2(\overline{k})$ are said to be
\begin{itemize}
\item in {\em general position} if and only if no three lie on a line, no six lie on a conic, and there is no cubic through seven of them having a singular point at the eighth;
\item in {\em almost general position} if and only if no four lie on a line and no seven lie on a conic.
\end{itemize}
Del Pezzo surfaces can always be described as follows \cite[Th~8.1.15]{Dolgachev}. 

\begin{theo}
Let~$X$ be a generalized del Pezzo surface over $k$ and let~$\overline{X} = X\otimes_k \overline{k}$ its extension to~$\overline{k}$.
If~$X$ is of degree~$d$, with~$3\leq d\leq 6$, then~$\overline{X}$ is the blowing up of~$\P^2$
at $r = 9-d$ points~$p_1,\ldots,p_r$ in almost general position; more precisely~$\overline{X}$ results in $r$ successive
blowups~$\pi_1,\ldots,\pi_r$:
$$
\overline{X} \overset{\pi_r}{\longrightarrow} X_{r} \longrightarrow \cdots \longrightarrow X_2 \overset{\pi_1}{\longrightarrow} X_1 := \P^2_{\overline{k}}
$$
where~$p_i\in X_{i}$ are in almost general position.
\end{theo}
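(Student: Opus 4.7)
The plan is to pass to the algebraic closure $\overline{k}$ (the statement is geometric in nature) and to construct the sequence $\pi_1,\ldots,\pi_r$ inductively, by repeatedly contracting a $(-1)$-curve on a weak del Pezzo surface and showing that the result is again a weak del Pezzo surface of degree one higher, until we land on a surface of degree~$9$ which we then identify with $\mathbf{P}^2$.

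The first step, and the heart of the argument, is to prove that a weak del Pezzo surface $\overline{X}$ of degree $d\leq 8$ over $\overline{k}$ always carries a (geometrically irreducible) $(-1)$-curve. Since $-K_{\overline{X}}$ is big and nef, Riemann--Roch combined with Kawamata--Viehweg vanishing yields $h^0(-K_{\overline{X}})=d+1\geq 4$, and the lattice structure of $\mathrm{Cl}(\overline{X})$ ensures the existence of exceptional classes. In the ordinary case every such class is the class of a unique irreducible curve; in the non-ordinary case one has to work harder, since an exceptional class may a priori be represented only by reducible divisors whose components include $(-2)$-curves. I would fix an exceptional class whose effective representative has smallest possible number of components among all exceptional classes (or equivalently, pick a class in the $W$-orbit of exceptional classes with minimal intersection against the effective roots), and show that such a minimal representative must itself be a $(-1)$-curve by arguing that any decomposition would produce a strictly ``more minimal'' class in its support.

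Given such a $(-1)$-curve $E\subset\overline{X}$, Castelnuovo's criterion supplies a contraction $\pi\colon\overline{X}\to X'$ onto a smooth surface. The formula $K_{\overline{X}}=\pi^*K_{X'}+E$ gives $K_{X'}^{\cdot 2}=d+1$, so bigness of $-K_{X'}$ is automatic. For nefness, I would take an irreducible curve $C'\subset X'$, write $\pi^*C'=\widetilde{C}+m E$ with $m\geq 0$ ($\widetilde{C}$ the strict transform), and compute $(-K_{X'})\cdot C'=(-K_{\overline{X}})\cdot\widetilde{C}\geq 0$ using the projection formula and the nefness of $-K_{\overline{X}}$. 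Hence $X'$ is a weak del Pezzo surface of degree $d+1$, and one may iterate. After $r=9-d$ steps one reaches a smooth surface $Y$ with $K_Y^{\cdot 2}=9$ and $-K_Y$ ample; by the Enriques--Kodaira classification of minimal rational surfaces this forces $Y\simeq\mathbf{P}^2_{\overline{k}}$.

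It remains to check the almost general position of the centers $p_1,\ldots,p_r$. This is a direct consequence of the bound $C^{\cdot 2}\geq -2$ on negative curves established just before the theorem: if four of the $p_i$ were collinear, the strict transform on $\overline{X}$ of the corresponding line would have self-intersection at most $1-4=-3$; if seven lay on a conic, the strict transform of that conic would have self-intersection at most $4-7=-3$; either would contradict the weak del Pezzo property. The genuinely delicate part of the whole proof is the first one---producing an \emph{irreducible} $(-1)$-curve rather than a merely effective exceptional class---once this is in hand, everything else is routine bookkeeping with the intersection pairing and Castelnuovo's theorem.
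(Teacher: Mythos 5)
The paper does not prove this statement at all---it quotes it from Dolgachev (Th.~8.1.15)---so there is no internal proof to compare against; your proposal has to stand on its own. The overall strategy (produce an irreducible $(-1)$-curve, contract by Castelnuovo, iterate, then read off almost general position from the bound $C^{\cdot 2}\geq -2$) is the standard one, but as written it has a genuine gap at the top of the induction. The claim that every weak del Pezzo surface of degree $d\leq 8$ carries a $(-1)$-curve is false: $\P^1\times\P^1$ and the second Hirzebruch surface $\Sigma_2$ are weak del Pezzo surfaces of degree $8$ with no $(-1)$-curves whatsoever (the only negative curve on $\Sigma_2$ is its $(-2)$-section). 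Worse, this stall is actually reachable by your procedure: on the degree-$7$ surface obtained by blowing up two distinct points, the class $E_0-E_1-E_2$ is an irreducible $(-1)$-curve, and contracting it yields $\P^1\times\P^1$; in the infinitely near case $p_2\succ p_1$, contracting $E_0-E_1-E_2$ yields $\Sigma_2$ (the $(-2)$-curve $E_1-E_2$ is disjoint from it and survives). In either case your induction terminates at degree $8$ without reaching $\P^2$. The standard repair is to show that at the degree-$7$ stage one can always \emph{choose} a $(-1)$-curve whose contraction is the blowup of $\P^2$ at one point---equivalently, to re-factor the birational morphism using the fact that blowing up $\P^1\times\P^1$ or $\Sigma_2$ at a suitable point gives the same surface as blowing up $\P^2$ at two (possibly infinitely near) points. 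Without this step the argument does not terminate at $\P^2$.

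Two smaller points. Your key lemma---that on a non-ordinary weak del Pezzo surface some exceptional class is represented by an \emph{irreducible} curve---is only sketched; the clean statement is Demazure's criterion (quoted in the paper from Coray--Tsfasman, Prop.~5.5): an exceptional class is irreducible if and only if it meets every irreducible root non-negatively, and one must actually exhibit such a class, e.g.\ by minimizing intersection with the effective roots over the Weyl orbit. This is fixable but is not yet a proof. Finally, your verification of almost general position matches the paper's definition (no four on a line, no seven on a conic), but for infinitely near points one must also rule out blowing up a point lying on a $(-2)$-curve of the previous stage; this is excluded by the same mechanism you invoke, since it would create an irreducible curve of self-intersection $\leq -3$ on $\overline{X}$.
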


Let~$E_0$ be the class of a line in~$\P^2$ and let~$E_1,\ldots,E_r$ be the exceptional curves at each stage. Then the divisor class group
of~$\overline{X}$ with its intersection pairing can be easily described:
\begin{align*}
&\cldivweil(\overline{X}) = \Z E_0 \oplus \Z E_1 \oplus \cdots \oplus \Z E_r
&
&\Mat\left(\_\cdot\_, (E_i)_{0\leq i \leq r}\right)
=
\Diag(1,-1,\ldots,-1),
\end{align*}
where~$\Diag$ denotes the diagonal matrix. This also gives explicitly the canonical class:
\begin{equation}\label{eqCanonicalClass}
K_X = -3E_0 + \sum_{i=1}^r E_i.
\end{equation}
The negative classes can be expressed in terms of the basis~$E_0,E_1,\ldots,E_r$ \cite[\S8.2]{Dolgachev}.
\begin{center}
\begin{tabular}{|p{0.2\textwidth}|p{0.35\textwidth}|p{0.35\textwidth}|}
\hline
{\bf Name} & {\bf Exceptional classes} $E$ & {\bf Roots} $R$ \\
\hline
{\bf Conditions} & $E^{\cdot 2} = -1$ and $E\cdot (-K_X) = 1$
                  & $R^{\cdot 2} = -2$ and $R\cdot K_X = 0$
                  \\ 
\hline
{\bf Expression}
&
$E_i$,\hfill$i\in\{1,\ldots,r\}$
&
$R_{ij} = E_i-E_j$,\hfill$\{i,j\}\subset\{1,\ldots,r\}$
\\
{\bf in terms of the $E_i$}
&
$E_{ij} = E_0-E_i-E_j$,\hfill$\{i,j\}\subset\{1,\ldots,r\}$
&
$R_{ijk} = E_0-E_i-E_j-E_k$, $-R_{ijk}$
\\
&
$E_{i_1\cdots i_5} = 2E_0 - \sum_{j=1}^5 E_{i_j}$
&
\hfill$\{i,j,k\}\subset \{1,\ldots,r\}$
\\
&
\hfill$\{i_1,i_2,i_3,i_4,i_5\}\subset\{1,\ldots,r\}$
& $R_{i_1\cdots i_6} = 2E_0 - \sum_{j=1}^6 E_{i_j}$, $-R_{i_1\cdots i_6}$
\\
&&\hfill$\{i_1,i_2,i_3,i_4,i_5,i_6\}\subset\{1,\ldots,r\}$
\\
\hline
\end{tabular}
\end{center}

We note that, in the notation~$E_{ij}$, the indices are unordered (which leads to~$\binom{r}{2}$ possibilities), whereas they are ordered
in the notation~$R_{ij}$ since~$R_{ji} = -R_{ij}$ (which leads to~$2\binom{r}{2}$ possibilities).

Not all these divisor classes are effective and the effectiveness of certain of these classes differentiate some types of Del Pezzo surface.
%For example, if~$X$ is smooth then no roots are
%effective while if~$X$ is weak (and not smooth) then at least one of the classes of roots is effective.

$\bullet$ In the ordinary case, each exceptional class of divisor is represented by a unique irreducible curve.
Either it is one exceptional curve~$E_i$ for some~$1\leq i\leq r$ or the strict transform of the line of~$\P^2$ passing
through~$p_i$ and~$p_j$ for the class~$E_{ij}$ or the strict transform
of the (unique) conic of~$\P^2$ passing through
%the~$p_{i_1},p_{i_2},p_{i_3},p_{i_4},p_{i_5}$
the~$p_{i_1},\ldots,p_{i_5}$
for~$E_{i_1i_2i_3i_4i_5}$. Their intersection graph is an important
invariant of the ordinary Del Pezzo surfaces; figures of these graphs for~$3\leq r\leq 5$ can be found in Manin~\cite[\S26.9]{Manin} or
in Dolgachev \cite[\S8.6.3, Figure~8.5]{Dolgachev}. As for the root classes, no one is
effective.

$\bullet$ In the non ordinary cases, where the points are no longer in {\em general} position but only in {\em almost general} position,
the exceptional divisors
are still effective but not necessarily represented by irreducible curves anymore.

For example, if~$p_1,p_2,p_3$ are collinear then the
root~$R_{123}$ becomes effective since it is the class of the strict transform of the line passing through~$p_1,p_2,p_3$ and the four
exceptional classes~$E_{12},E_{13},E_{23},E_{12345}$ are represented by reducible curves since
\begin{align*}
&E_{12} = R_{123} + E_3,
&
&E_{13} = R_{123} + E_2,
&
&E_{23} = R_{123} + E_1,
&
&E_{12345} = R_{123} + E_{45}.
\end{align*}
In this case, all other exceptional divisors are still represented by irreducible curves.

Another simple example: if~$p_2$ is chosen to be on~$E_1$, $p_2 \succ p_1$, then the root~$E_1-E_2$ becomes effective since it is the
strict transform of~$E_1$. The exceptional classes~$E_1$ and~$E_{1j}$ $j\not=1,2$ are no longer represented by irreducible curves.

In general, a result of Demazure states that exceptional divisors that are represented by irreducible curves
are characterized by the fact that they intersect non negatively ($\geq 0$) all the irreducible roots \cite[Proposition~5.5]{CorayTsfasman}.

Another general result states that the set of irreducible roots (the effective classes represented by an irreducible curve)
is necessarily a free family in~$\cldivweil(X\otimes\overline{\FF}_q)$ (see loc. cit.). In particular, there are at most $r$ effective
irreducible roots. The lattice generated by the effective roots plays a crucial role.

\begin{defi}
Let~$X$ be a generalized del Pezzo surface over $k$ and let~$\overline{X} = X\otimes_k \overline{k}$ be its extension to~$\overline{k}$.
We denote by~$\overline{\RR}$ the sub-lattice of~$\cldivweil(\overline{X})$ generated by the effective roots and by~$\RR$
sub-lattice of~$\cldivweil(X)$ defined by~$\RR = \overline{\RR}^\Gamma$.
\end{defi}

Following Coray and Tsfasman (see loc. cit.), an important invariant of a weak Del Pezzo surface is the graph of negative curves, which is an analog of the intersection graph
of the exceptional divisors/curves introduced above in the ordinary case. To take into account the fact that the surface may be non
ordinary, the set of
vertices is modified: the vertices corresponding
to reducible exceptional divisors are cancelled, while vertices corresponding to effective and irreducible roots are added.

\subsection{The anticanonical model~$X_s$}

\subsubsection{The morphism induced by the anticanonical divisor~$-K_X$}

Let~$X$ be a del Pezzo surface of degree~$d$, with~$3\leq d\leq 6$ and whose canonical divisor is denoted by~$K_X$.
In the ordinary case, the anticanonical divisor~$-K_X$ is known to be very ample and it induces a projective embedding
of~$X$ into~$\P^d = \P(H^0(X,-K_X))$ (see~\S\ref{sEvaluationCodes} for a review about the space of global sections of a divisor)
In the non ordinary case, the anticanonical class~$-K_X$ is no longer ample but its linear system remains base point free and gives a morphism from~$X$ to
a projective space:

\begin{defi}
Let~$X$ be a weak del Pezzo surface of degree~$d$, with~$3\leq d\leq 6$. The image~$\plongement(X)$,
where~$\plongement : X \to \P\left(H^0(X,-K_X)\right) = \P^d$ is the projective morphism associated to the anticanonical
divisor~$-K_X$ is called the anticanonical model of~$X$ and is denoted by~$X_s$. We put~$K_{X_s} = \plongement_*(K_X)$.
\end{defi}

This kind of del Pezzo surface corresponds to the definition~8.1.5 in Dolgachev \cite{Dolgachev}.
The fifth talk of Demazure \cite[Expos\'e~V]{Demazure} on del Pezzo surfaces contains all the main properties of this
anticanonical model.

\begin{prop}
The morphism~$\plongement$ satisfies:
\begin{enumerate}
\item it is not a projective embedding (since~$-K_X$ is not ample) but the image~$X_s$ is a normal surface
whose singularities are rational double points;
\item it is the minimal desingularization of~$X_s$, it contracts all the irreducible effective roots on~$X$ into the singular points
and nothing else;
\item the Weil divisor~$K_{X_s}=\plongement_*(K_X)$ is a Cartier divisor  of~$X_s$ which satisfies~$\plongement^*(K_{X_s}) =K_X$;
\end{enumerate}
\end{prop}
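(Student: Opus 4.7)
The plan is to treat the three assertions in order, leaning throughout on the fact that $-K_X$ is big and nef and that $\plongement$ is given by the complete linear system $|-K_X|$, so that the irreducible curves it contracts are exactly those orthogonal to $-K_X$. For assertion~(i), I would first invoke base-point-freeness of $|-K_X|$, which is standard for a weak del Pezzo of degree~$d\geq 3$ (classical, as in the reference to Demazure cited just above the proposition) and produces the morphism $\plongement : X \to \P^d$. Bigness forces $\plongement$ to be birational onto $X_s$, while the hypothesis that $-K_X$ is not ample rules out a closed immersion. Normality of $X_s$ and rationality of its singularities follow by applying Kawamata--Viehweg vanishing to $-K_X$ and Stein factorization to obtain $\plongement_*\O_X=\O_{X_s}$ and $R^i\plongement_*\O_X=0$ for $i\geq 1$. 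The exceptional locus of $\plongement$ consists of curves $C$ with $(-K_X)\cdot C=0$, whose intersection form is negative definite by the Hodge index theorem on~$X$; the adjunction argument recalled in the excerpt identifies each irreducible component as a smooth rational curve of self-intersection $-2$. The ADE classification of negative-definite configurations of such $(-2)$-curves then identifies the singularities of $X_s$ as Du Val, i.e.\ rational double points.

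Assertion~(ii) follows almost immediately. Since $\plongement$ is defined by global sections of $-K_X$, an irreducible curve $C\subset X$ is contracted iff $(-K_X)\cdot C=0$, iff $C$ is an irreducible effective root by the same adjunction computation. Away from the union of these curves $\plongement$ is quasi-finite and birational between normal surfaces, hence an isomorphism by Zariski's main theorem; this is exactly the statement that $\plongement$ is the minimal desingularization of $X_s$. For~(iii), Du Val singularities being Gorenstein, $K_{X_s}$ is automatically Cartier. To establish the crepancy formula $\plongement^*(K_{X_s})=K_X$, I would write
$$
K_X - \plongement^*(K_{X_s}) = \sum_i a_i R_i,
$$
where $R_1,\ldots,R_m$ are the irreducible effective roots, the difference being supported on the exceptional locus since both sides agree off it. Intersecting with $R_j$ yields $\sum_i a_i (R_i\cdot R_j)=0$ for every $j$: indeed $K_X\cdot R_j=0$ by the very definition of a root, and $\plongement^*(K_{X_s})\cdot R_j = K_{X_s}\cdot \plongement_* R_j=0$ since $R_j$ is contracted to a point. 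Negative definiteness of the intersection matrix $(R_i\cdot R_j)$, guaranteed by Mumford/Artin for exceptional configurations over rational singularities, then forces all $a_i=0$.

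The main obstacle is really concentrated in~(i): the combination of vanishing, normality and the Du Val classification of the singularities of $X_s$ is the place where substantive surface singularity theory enters, and I would rely on the references to Demazure and Dolgachev already present in the text rather than reprove it from scratch. Once this structural input is in hand, (ii) amounts to reading off which irreducible curves $-K_X$ pairs trivially with, and (iii) reduces to the short negative-definite linear algebra above.
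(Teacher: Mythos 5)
The paper does not prove this proposition at all: it defers entirely to Demazure's Expos\'e~V and Dolgachev \S8.1, so there is no in-text argument to compare against. Your sketch is essentially the standard proof found in those references, and the overall architecture (base-point-freeness, contraction of exactly the curves orthogonal to $-K_X$, Zariski's main theorem off the roots, Gorenstein-ness of Du Val points, and the crepancy computation via negative definiteness) is correct. Two points deserve care, though. First, bigness of $-K_X$ alone does \emph{not} force $\plongement$ to be birational onto its image: a weak del Pezzo surface of degree~$2$ has $-K_X$ big and nef, yet the anticanonical map is generically $2:1$ onto $\P^2$. Birationality genuinely uses the hypothesis $d\geq 3$ (one shows $|-K_X|$ separates points and tangent vectors away from the effective roots), not just bigness, so you should not present it as a consequence of bigness. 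Second, the paper works over a finite field, and Kawamata--Viehweg vanishing is a characteristic-zero tool; the vanishing you need ($R^i\plongement_*\O_X=0$ for $i\geq 1$, hence rationality of the singularities) must instead be obtained from the characteristic-free arguments for rational surfaces as in Demazure (e.g.\ $H^1(X,\O_X)=0$ for a blow-up of $\P^2$ together with the explicit analysis of the contracted $(-2)$-configurations). With those two substitutions your outline is a faithful reconstruction of the cited proof.
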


For each singularity, the exceptional divisor of its minimal resolution is a sum of irreducible effective roots $(R_i)$ with $R_i\cdot R_j\in\{0,1\}$ for any $i\neq j$. As usual in the $\A\D\E$ classification of rational double points, we describe the type of a singularity by its dual graph: its vertices correspond to the above roots, and there is an edge between the two vertices when the corresponding roots meet. 
In the examples below, the types of rational double points that appear correspond to the graphs:
$$
\begin{tikzpicture}[baseline=0]
\draw (0,0) node {$\bullet$} node[below] {$\mathbf{A}_1$};
\draw[thick] (2,0) node {$\bullet$} -- (3,0) node {$\bullet$} node[below,midway] {$\mathbf{A}_2$};
\draw[thick] (5,0) node {$\bullet$} -- (6,0) node {$\bullet$} node[below,midway] {$\mathbf{D}_5$} -- (7,0) node {$\bullet$} -- (8,0) node {$\bullet$};
\draw[thick] (7,0) -- (7,-1) node {$\bullet$};
%\draw[thick] (-1.5,0)  node[left] {$\ell_{123}$} -- (1.5,0) ;
%\draw (-1,0) node {$\bullet$} node[below] {$p_1$};
%\draw (0,0) node {$\bullet$} node[below] {$p_2$};
%\draw (1,0) node {$\bullet$} node[below] {$p_3$};
\end{tikzpicture}
$$
As mentioned in the last item, since the anticanonical model of a non ordinary weak del Pezzo surface is not smooth but only normal,
a Weil divisor may not be Cartier and the class groups of Cartier or Weil divisor may differ.

\subsubsection{Cartier versus Weil divisors and class groups}\label{sCartierWeil}

Let~$X$ be a normal surface; let~$k(X)$ be its field of rational functions and~$\O_X$ its structural sheaf.
We need to review some general facts about divisors in
such surfaces (see Liu \cite[\S7.1 \& 7.2]{Liu} for more details).

$\bullet$ A {\em prime Weil divisor}~$X$ is a prime closed sub-variety of codimension~$1$ and the
group of {\em Weil divisors~$\divweil(X)$} is the free abelian group generated by prime Weil divisors.
A Weil divisor~$D$ can be written~$\sum_i n_i C_i$ where the~$C_i$'s are irreducible curves on~$X$ and where the~$n_i$ are integers of which only a finite number are non zero. Such a divisor is said {\em effective} if~$n_i\geq 0$ for all~$i$.
Since~$X$ is normal, it is regular in codimension~$1$ and to each rational function~$f\in k(X)$, one can associate a Weil
divisor~$(f)$ which is called {\em principal}. The set of principal divisors is a sub-group of~$\divweil(X)$.

$\bullet$ A {\em Cartier divisor}, or a {\em locally principal divisor}~$D$ is a global section of the
sheaf~$k(X)^\times/\O_X^\times$; it consists in a
collection~$(U_i,f_i)_{i\in I}$ where~$(U_i)_{i\in I}$ is an open covering of~$X$ and where the~$f_i$'s are rational functions
such that the quotients~$f_i/f_j$ have neither zeroes nor poles on~$U_i \cap U_j$, i.e. such that~$f_i/f_j\in\O_X^\times(U_i \cap U_j)$.
Two collections~$(U_i, f_i)_{i\in I}$ and~$(V_j, g_j)_{j\in I}$ represent the same Cartier divisor if on~$U_i\cap V_j$, the functions~$f_i$
and~$g_j$ differ by a multiplicative factor in~$\O_X^\times(U_i\cap V_j)$ for every~$i,j$. The set of Cartier divisors can be
turned into an abelian group which we denote by~$\divcartier(X)$. A Cartier divisor is called {\em effective} if
it can be represented by a collection~$(U_i,f_i)$ with~$f_i\in\O_X(U_i)$ for every~$i$. A principal Cartier divisor is
represented by a collection~$(X,f)$, where~$f\in k(X)^\times$. The set of principal divisors is also a sub-group of~$\divcartier(X)$.

$\bullet$ To each Cartier divisor~$D$ one can associate a Weil divisor and this correspondence induces a
group homorphism~$\divcartier(X)\to\divweil(X)$; since~$X$ is supposed to be normal, this morphism is injective
\cite[Chap~7, Prop~2.14]{Liu} and it sends an effective Cartier divisor to an effective Weil one.

$\bullet$ The quotients of the divisor groups~$\divcartier(X)$ and~$\divweil(X)$ by the principal divisors
are denoted~$\cldivcartier(X)$ and~$\cldivweil(X)$. The previous correspondence induces an injective
homorphism~$\cldivcartier(X)\to\cldivweil(X)$.

\medbreak

These are general facts, but in the context of weak del Pezzo surfaces, we are able to be much more explicit. 
In particular, one can relate the two groups~$\cldivcartier(X_s)$ and~$\cldivweil(X_s)$ to the
group~$\cldivweil(X)=\cldivcartier(X)$.

\begin{prop}\label{propCaClCl}
Let~$X$ be a weak del Pezzo surface over~$k$ and let~$X_s$ be its anticanonical model. Over~$\overline{k}$, one has the two exact sequences:
\begin{align}
&0 \longrightarrow \overline{\RR} \longrightarrow \cldivweil(\overline{X}) \longrightarrow \cldivweil(\overline{X}_s) \longrightarrow 0
&
&\Longrightarrow
&
&\cldivweil(\overline{X}_s) = \cldivweil(\overline{X})/\overline{\RR},\label{eqCl}\\
&0 \longrightarrow \cldivcartier(\overline{X}_s) \longrightarrow \cldivweil(\overline{X}) \longrightarrow \Hom\left(\overline{\RR},\Z\right)
&
&\Longrightarrow
&
&\cldivcartier(\overline{X}_s) = \overline{\RR}^\perp,\label{eqPic}
\end{align}
where the arrow~$\cldivweil(\overline{X}) \to \Hom\left(\overline{\RR},\Z\right)$
is given by~$D \mapsto [R \mapsto D\cdot R]$, and where~$\overline{\RR}^\perp = \{D\in \cldivweil(\overline{X}) \mid D\cdot R = 0,\; \forall R\in\overline{\RR}\}$. Over~$k$, one has~$\cldivweil(X)=\cldivcartier(X)=\cldivcartier(\overline{X})^\Gamma=\cldivweil(\overline{X})^\Gamma$ for $X$, and for its anticanonical model:
\begin{align}
&0 \longrightarrow \RR \longrightarrow \cldivweil(X) \longrightarrow \cldivweil(X_s) \longrightarrow 0
&
&\Longrightarrow
&
&\cldivweil(X_s) = \cldivweil(X)/\RR,\label{eqCl_k}\\
&0 \longrightarrow \cldivcartier(X_s) \longrightarrow \cldivweil(X) \longrightarrow \Hom\left(\overline{\RR},\Z\right)^\Gamma
&
&\Longrightarrow
&
&\cldivcartier(X_s) = \RR^\perp\label{eqPic_k}
\end{align}
Moreover we have an isomorphism $\cldivweil(X_s)\simeq \cldivweil(\overline{X}_s)^\Gamma$.
\end{prop}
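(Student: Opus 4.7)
The plan is to establish the two short exact sequences over $\overline{k}$ first, then descend to $k$ by taking $\Gamma$-invariants. Throughout, let $\plongement : \overline{X} \to \overline{X}_s$ denote the minimal desingularization; its exceptional locus consists exactly of the irreducible effective roots, organized into $\A\D\E$ clusters lying above the rational double points of $\overline{X}_s$.

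For the Weil sequence (\ref{eqCl}), I would define $\plongement_*$ on prime divisors: an irreducible effective root is sent to $0$, while every other prime divisor goes to its image in $\overline{X}_s$, which is again a prime divisor since $\plongement$ is an isomorphism off the exceptional locus. Surjectivity is immediate via strict transforms, and $\plongement_*$ preserves principal divisors since $\plongement$ is birational; the induced map on class groups is therefore a surjection with kernel plainly $\overline{\RR}$. For the Cartier sequence (\ref{eqPic}), the pullback $\plongement^* : \cldivcartier(\overline{X}_s) \to \cldivcartier(\overline{X}) = \cldivweil(\overline{X})$ is well-defined and injective because $\plongement_* \plongement^* = \mathrm{id}$, and the projection formula $\plongement^* D' \cdot R = D' \cdot \plongement_* R = 0$ for every irreducible effective root $R$ forces the image to lie inside $\overline{\RR}^\perp$.

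For the reverse inclusion $\overline{\RR}^\perp \subseteq \cldivcartier(\overline{X}_s)$, take $D \in \overline{\RR}^\perp$. Each cluster of irreducible effective roots over a singular point has intersection matrix equal to minus the corresponding Cartan matrix, hence non-degenerate, so there exist unique $a_i \in \Q$ with $\plongement^* \plongement_* D = D + \sum a_i R_i$; the hypothesis $D \cdot R_j = 0$ then forces every $a_i$ to vanish, so $\plongement^*$ identifies $D$ with $\plongement_* D$. The genuinely delicate step that remains is to show that $\plongement_* D$ is Cartier rather than merely Weil on $\overline{X}_s$: this is a local assertion at each rational double point, where the local class group is a finite group described as the cokernel of the $\A\D\E$ intersection matrix, and where the class of $\plongement_* D$ vanishes precisely because $D$ is orthogonal to every exceptional component above that point. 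This is the main obstacle of the proof, and it is where the hypothesis of rational double point singularities is essential.

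For the descent to $k$, the equalities $\cldivweil(X) = \cldivcartier(X) = \cldivweil(\overline{X})^\Gamma$ are standard (smoothness of $X$ plus Hilbert~90). Taking $\Gamma$-invariants of (\ref{eqCl}) and (\ref{eqPic}) yields the four-term sequences (\ref{eqCl_k}) and (\ref{eqPic_k}); the only non-formal point is surjectivity of $\cldivweil(X) \to \cldivweil(X_s)$ in (\ref{eqCl_k}), which is handled by lifting each prime Weil divisor of $X_s$ to its strict transform on $X$ (defined over $k$ since the divisor is). The identification $\overline{\RR}^\perp \cap \cldivweil(\overline{X})^\Gamma = \RR^\perp$ holds because $\overline{\RR}$ is $\Gamma$-stable. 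The final isomorphism $\cldivweil(X_s) \simeq \cldivweil(\overline{X}_s)^\Gamma$ follows by comparing (\ref{eqCl}) and (\ref{eqCl_k}): both exhibit $\cldivweil(X_s)$ as $\cldivweil(\overline{X})^\Gamma / \RR$, which canonically injects into $\cldivweil(\overline{X}_s)^\Gamma$, and surjectivity of the comparison map is again the strict-transform argument applied to $\Gamma$-invariant Weil classes.
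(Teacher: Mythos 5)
Your proposal is correct in outline and reaches all the stated conclusions, but it takes a genuinely different route on the central point. For the Weil sequences \eqref{eqCl} and \eqref{eqCl_k} you argue via the pushforward $\plongement_*$ on prime divisors, whereas the paper invokes the excision sequence $0\to\RR\to\Cl(X)\to\Cl(U)\to 0$ for $U=X\setminus R$ together with $\Cl(X_s)\simeq\Cl(U_s)$; these are two phrasings of the same localization argument and both are fine. The real divergence is \eqref{eqPic}: the paper simply cites Bright's Proposition~1, while you reconstruct the proof by hand --- injectivity of $\plongement^*$ from $\plongement_*\plongement^*=\mathrm{id}$, the inclusion of the image in $\overline{\RR}^\perp$ from the projection formula, and the reverse inclusion from the Mumford pullback plus negative-definiteness of each $\A\D\E$ block. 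This is the more instructive route, but the step you yourself flag as ``the main obstacle'' --- that a Weil divisor orthogonal to every exceptional component of the resolution is Cartier at a rational double point because the local class group is the cokernel of the intersection matrix --- is asserted rather than proved. That assertion is precisely Lipman's description of the local class group of a rational surface singularity (and is the content of Bright's proposition); without citing it or proving it, your argument has a hole exactly where the paper chose to lean on a reference.

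On the descent, two smaller points differ. First, you pass from $\cldivcartier(\overline{X}_s)^\Gamma$ to $\cldivcartier(X_s)$ without comment; the paper justifies this identification via \cite[Tag 0CDS]{stacks}, and some such descent statement (using that $k$ is finite, so $\Br(k)=0$) is needed. Second, for the isomorphism $\cldivweil(X_s)\simeq\cldivweil(\overline{X}_s)^\Gamma$ the paper uses that $\overline{\RR}$ is an induced Galois module, hence $H^1(\Gamma,\overline{\RR})=0$, and takes invariants of \eqref{eqCl}; your alternative --- representing a $\Gamma$-invariant class by a $\Gamma$-invariant divisor and taking strict transforms --- does work over a finite field, but only because $H^1(\Gamma,\overline{k}(X_s)^\times/\overline{k}^\times)=0$ (Hilbert~90 plus $\Br(k)=0$), which you should say explicitly since a $\Gamma$-invariant \emph{class} is not a priori a $\Gamma$-invariant \emph{divisor}. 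The paper's cohomological route is cleaner here and avoids that subtlety.
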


\begin{proof}
Let~$R$ be the union of effective roots in~$X$ and let~$U  = X \setminus R$ be the open complementary. By a result of
Hartshorne \cite[Chap~II, Prop~6.5]{Hartshorne}, we have the exact sequence:
$$
0 \longrightarrow \RR \longrightarrow \Cl(X) \longrightarrow \Cl(U) \longrightarrow 0.
$$
Let~$U_s$ be the smooth locus of~$X_s$. This open set is of codimension~$2$ in~$X_s$ and
thus~$\Cl(X_s) \simeq \Cl(U_s)$ (see loc. cit.). Since the anticanonical map~$\plongement_{-K_X}$ induces an isomorphism from~$U$ to~$U_s$
one has~$\Cl(U_s)\simeq\Cl(U)$ and the sequence~\eqref{eqCl_k} follows. Sequence~\eqref{eqCl} also follows by extending scalars
to~$\overline{k}$.
%
%
%
%$$
%0 \longrightarrow \overline{\RR} \longrightarrow \Cl(\overline{X}) \longrightarrow \Cl(U) \longrightarrow 0.
%$$
%
%
%The previous proof also works over~$k$ since~$\RR$ is generated by the irreducible components of $R$ over $k$, and the exact sequence in~\eqref{eqCl_k} is true.

On the other hand, we note that the module~$\overline{\RR}$ being induced \cite[Chap~IV,\S29]{Manin}, we know
that~$H^1(\Gamma,\overline{\RR}) = 0$; thus taking the Galois invariants of~\eqref{eqCl} leads to:
$$
0 \longrightarrow \RR \longrightarrow \cldivweil(\overline{X})^\Gamma \longrightarrow \cldivweil(\overline{X}_s)^\Gamma \longrightarrow
0
$$
Now $X$ is smooth and we have~$\cldivweil(X)=\cldivweil(\overline{X})^\Gamma$ \cite[Tag 0CDS]{stacks}; we deduce the last isomorphism $\cldivweil(X_s)\simeq\cldivweil(\overline{X}_s)^\Gamma$.

The exact sequence~\eqref{eqPic} comes from Bright \cite[Prop~1]{Bright}.
%and from \cite[8.1 Proposition 4]{BoschLutkebohmertRaynaud}.
We deduce the equality $\cldivcartier(\overline{X}_s) = \overline{\RR}^\perp$, and from \cite[Tag 0CDS]{stacks},
 we deduce
that~$\cldivcartier(X_s)=\cldivcartier(\overline{X}_s)^\Gamma=(\overline{\RR}^\perp)^\Gamma$.

Finally, taking the Galois invariants in the sequence~\eqref{eqPic}
gives the exact sequence in~\eqref{eqPic_k}. Now since the intersection product is invariant under the Galois action, a divisor in $\cldivweil(X)$ is orthogonal to $\overline{\RR}$ if and only if it is orthogonal to $\RR$, and we get the isomorphism in~\eqref{eqPic_k}.
\end{proof}

\subsubsection{Lattice computations} \label{sToolsLattices}

One of the key step of the study of codes from weak del Pezzo surfaces is the explicit computation of the divisor class groups
as in~\eqref{eqCl} and~\eqref{eqPic}. Such computations take place in the group~$\cldivweil(\overline{X})$, which is known to be a free
$\Z$-module of finite type endowed with the (non degenerate) intersection bilinear form
and involve the root lattices~$\overline{\RR}$, which is given by some explicit generators and which
satisfies~$\overline{\RR}\cap\overline{\RR}^\perp = \{0\}$ (the orthogonal is relative to the intersection pairing).

This is a general issue and let us consider~$C$ (for the ``class group'') a free $\Z$-module of finite rank with a non degenerate
symmetric bilinear form~$(x,y)\mapsto x\cdot y$ (for the intersection product).
Recall that a submodule~$M$ of~$C$ is a {\em direct summand} (or {\em is complemented})
if there exists a submodule~$N$ of~$C$ such that~$C = M\oplus N$; in this
case, the submodules~$M$ and~$N$ are called {\em complementary submodules} of~$C$ \cite[\S3.8,\S6.1]{Adkins_Weintraub}.
Let~$\RR$
be a submodule of~$C$ such that~$\RR \cap \RR^\perp = \{0\}$ (for the root lattice). 

In the context of modules over a principal ideal domain,
contrary to what is happening in vector
spaces over a field, even if~$\RR\cap\RR^\perp = \{0\}$, the orthogonal submodules~$\RR$ and~$\RR^\perp$ may not be complementary submodules. There are at least two different
kinds of obstructions for this.
Either the submodule~$\RR$ is not a direct summand or both submodules~$\RR$ and~$\RR^\perp$ are direct summands but they are not
complementary submodules.

In any case the smallest submodule containing~$\RR$ which is a direct summand
is called the {\em hull} of~$\RR$ and is denoted~$\RR^\sharp$. As for the submodule~$\RR^\perp$, since it is the kernel of a morphism
of free modules, it is always a direct summand. In the same way, even though~$\RR^\sharp$ and~$\RR^\perp$ are direct summand,
they may or may not be complementary submodules. These phenomenes make the description of the exact sequence:
$$
0 \longrightarrow \RR^\perp \longrightarrow C/\RR \longrightarrow C/\RR\oplus\RR^\perp \longrightarrow 0
$$
a little bit tricky  (ie the comparison between the groups of Weil classes and Cartier classes).
The main tool for the explicit computation of this sequence is the {\em Invariant factor theorem for submodules} that will be used twice
(see~\cite[Theorem~6.23]{Adkins_Weintraub}).

$\bullet$ First, we apply this result to the submodule~$\RR\subset C$: there exists a $\Z$-basis~$e_1,\ldots,e_n$ of~$C$
and~$\alpha_1\mid\cdots\mid \alpha_r$ ($r\leq n$) a sequence of
positive integers, called invariant factors, such that~$\RR = \Z \alpha_1 e_1 \oplus \cdots\oplus \Z \alpha_r e_r$
and~$\RR^\sharp = \Z e_1 \oplus \cdots\oplus \Z e_r$. The submodule~$\RR$ is a direct summand of~$C$ if and only
if~$\RR^\sharp = \RR$, if and only if the invariant factors~$\alpha_1,\ldots,\alpha_r$ are all equal to~$1$.

 Put~$M = \Z e_{r+1} \oplus \cdots\oplus \Z e_n$
then~$\RR^\sharp$ and~$M$ are complementary submodules, $C = \RR^\sharp \oplus M$, and the projections~$\iota_{\text{tors}}$
and~$\iota$ onto each factors lead to an isomorphism
\begin{align*}
\begin{array}{rcrcl}
C/\RR & \overset{\simeq}{\longrightarrow} & \RR^\sharp/\RR & \oplus & M\\
x \bmod{\RR} & \longmapsto & \iota_{\text{tors}}(x) \bmod{\RR}&+&\iota(x)    
\end{array}
\end{align*}
In other words, the projection~$\iota_{\text{tors}}$ gives an isomorphism from the torsion submodule of~$C/\RR$
to the quotient module~$\RR^\sharp/\RR$ which is isomorphic to~$\Z/\alpha_1\Z\times\cdots\times\Z/\alpha_r\Z$;
the projection~$\iota$ gives an isomorphism from the torsion-free submodule of~$C/\RR$ to the submodule~$M$ of~$C$.

%The projection onto~$M = \Z e_{r+1}\oplus\cdots\oplus\Z e_n$ defines an isomorphism between the free part of~$\cldivweil(X)/\RR$.

$\bullet$ Since~$\RR \cap \RR^\perp = \{0\}$, we know that~$\RR^\perp$ canonically embeds in the quotient~$C/\RR$; being free of torsion it is a
submodule of the torsion-free submodule of~$C/\RR$. Via~$\iota$ it thus embeds
in~$M$. Using the Invariant factor theorem again, one can choose the basis~$e_{r+1},\ldots,e_n$ of~$M$, in
such a way that there exists~$\beta_{r+1}\mid\cdots\mid \beta_n$ such that~$\iota(\RR^\perp) = \Z \beta_{r+1}e_{r+1} \oplus\cdots\oplus\Z \beta_ne_n$.
The cokernel~$C/\RR\oplus\RR^\perp$ is then isomorphic to~$\Z/\beta_{r+1}\Z\times\cdots\times\Z/\beta_n\Z$. In particular,
the canonical embedding of~$\RR^\perp$ inside~$M$ induces an isomorphism if and only if~$\beta_{r+1},\ldots,\beta_n$ are all equal to~$1$.

In the sequel, we do not give names to the projection morphisms~$\iota_{\text{tors}},\iota$.

\section{Codes from surfaces: construction and tools for their study} \label{sTools}

In this section~$k$ is a finite field~$\F_q$.
%The book of Liu \cite[\S7.1 \& 7.2]{Liu} is a good alternative to Hartshorne's classic for classical algebraic geometry results

%\subsection{Cartier divisor, global section, complete linear system}\label{sCDivH0}

\subsection{Evaluation codes from surfaces} \label{sEvaluationCodes}

Cartier divisors, their spaces of global sections, and
the associated complete linear systems are the main ingredients to define and to characterize the parameters of the {\em evaluation codes}
from an algebraic surfaces. Let us recall the definitions and the basic facts concerning these objects. We
consider~$X$ a (not necessarily smooth, but in fact at least normal here) irreducible surface over~$k$. We denote by~$k(X)$ its function
field and by~$\O_X$ its structural
sheaf. Let~$D = (U_i, f_i)_{i\in I}$ be a Cartier divisor on this surface~$X$.

A {\em global section} of~$D$ is a function~$s\in k(X)$ such that for every~$i\in I$, the product~$sf_i$ is regular on~$U_i$,
that is~$s f_i\in\O_X(U_i)$ for all~$i\in I$. We denote by~$H^0(X,D)$ the set of these sections;  this is a vector space
which is known to have finite dimension.

By definition, if~$s\in H^0(X, D)$ is a global section of~$D$ then
the Cartier divisor~$(U_i, s f_i)_{i\in I}$ is {\em effective}. It can be shown that two global sections of~$D$ lead to the same effective
Cartier divisor if and only if they differ by a non zero constant. This means that there is a one-to-one correspondence between the
projective
space~$\P(H^0(X, D))$ and the set of effective Cartier divisors linearly equivalent to~$D$. This last set is called the {\em complete linear
system} associated to the divisor~$D$ and is currently denoted by~$|D|$, so we have~$|D| = \P(H^0(X, D))$. An important
invariant of the divisor (or linear system) for our purpose is the maximum of rational points that can contain a
curve of~$|D|$; we put:
\begin{equation}\label{eqNqD}
N_q(D) = \max\{\#C(\F_q) \mid C\in|D|\}.
\end{equation}

\begin{defi}\label{defCode}
Let~$X$ be a (not necessarily smooth) irreducible surface over~$k$, let~$D$ be a Cartier divisor of~$X$ and
let~$\Pcal = \{p_1,\ldots,p_n\}$ be a set of rational points of~$X$. The {\bfseries evaluation code}~$\Ccal_X(D,\Pcal)$ is the image of
the evaluation map
$$
\begin{array}{cccc}
H^0(X, D) & \longrightarrow & k^n\\
s & \longmapsto & (sf_{i_p})(p)
\end{array}
$$
where for each point~$p$, the index~$i_p$ is chosen in such a way that~$p \in U_{i_p}$.
\end{defi}

In the preceding definition, the choice of~$i_p$ may be not unique but different choices~$i_p,j_p$ of these indices lead to homothetic
codes since the quotients~$f_{i_p}/f_{j_p}$ are non vanishing regular functions on~$U_{i_p}\cap U_{j_p}$.

The usual parameters
of the evaluation code are related with some invariants of the surface.

\begin{prop} \label{propParameters}
If the evaluation map is injective, a~$\Ccal_X(D,X(\F_q))$ has
\begin{enumerate}
\item {\bfseries length} equal to~$\#X(\F_q)$ the number of rational points of~$X$,
\item {\bfseries dimension} equal to the dimension of the space~$H^0(X,D)$ of global sections,
\item {\bfseries minimum distance} bounded below by~$n - N_q(D)$, where~$N_q(D)$ is defined in~\eqref{eqNqD}.
\end{enumerate}
\end{prop}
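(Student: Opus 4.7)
The plan is to verify the three items in turn, with the minimum distance bound being the only one requiring real content.

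For item (i), the evaluation map takes values in $k^n$ with $n=\#X(\F_q)$ by construction, so the length of the code, as a subspace of $k^n$, is automatically $\#X(\F_q)$. For item (ii), injectivity of the evaluation map is exactly the hypothesis, so the image $\Ccal_X(D,X(\F_q))$ has dimension equal to $\dim H^0(X,D)$.

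The main point is item (iii). First I would fix a nonzero global section $s\in H^0(X,D)$ and analyze the weight of the codeword $c=\mathrm{ev}(s)$. By definition of the weight, one has
\[
w(c)=\#X(\F_q)-\#\{p\in X(\F_q)\mid (sf_{i_p})(p)=0\}.
\]
The key observation is that the vanishing locus of the local expression $sf_{i_p}$ on $U_{i_p}$ is independent of the chart chosen: if $p\in U_{i_p}\cap U_{j_p}$, then $f_{i_p}/f_{j_p}\in\O_X^\times(U_{i_p}\cap U_{j_p})$ is a unit, so $(sf_{i_p})(p)=0$ if and only if $(sf_{j_p})(p)=0$. Consequently, the set $\{p\in X(\F_q)\mid (sf_{i_p})(p)=0\}$ is precisely the set of rational points of the effective Cartier divisor $C_s:=(U_i,sf_i)_{i\in I}$ discussed just before Definition~\ref{defCode}. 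Since $C_s$ is effective and linearly equivalent to $D$, it defines an element of the complete linear system $|D|$.

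It follows that
\[
w(c)=\#X(\F_q)-\#C_s(\F_q)\geq n-N_q(D),
\]
the last inequality being the definition~\eqref{eqNqD} of $N_q(D)$ as the maximum of $\#C(\F_q)$ over $C\in|D|$. Taking the minimum of $w(c)$ over all nonzero codewords yields the announced lower bound on $\dmin$. The only mild subtlety is the chart-independence check above; once that is observed, the identification of the zero set of a global section with the rational points of an element of $|D|$ makes the bound immediate, and no further calculation is needed.
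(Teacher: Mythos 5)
Your proof is correct; the paper states Proposition~\ref{propParameters} without proof, and your argument (length and dimension immediate from the definition and the injectivity hypothesis, weight of a nonzero codeword equal to $n-\#C_s(\F_q)$ for the effective divisor $C_s=(U_i,sf_i)_{i\in I}\in|D|$, hence bounded below by $n-N_q(D)$) is exactly the standard reasoning the authors implicitly rely on. The chart-independence check via $f_{i_p}/f_{j_p}\in\O_X^\times(U_{i_p}\cap U_{j_p})$ is the right point to make explicit, and nothing further is needed.
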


%\begin{proof}
%Let~$D$ be a Cartier divisor of a surface~$X$ and~$s$ a global section of~$H^0(X,D)$
%\end{proof}

Thanks to this proposition, it is worth noticing that bounding below the minimum distance of an evaluation code~$\Ccal_X(D,X(k))$ from a
surface~$X$ reduces to bounding above~$N_q(D)$ the number of points of the curves of the linear system~$|D|$ associated to the divisor~$D$.
The fewer the number of rational points of the curves in the linear system~$|D|$, the higher the minimum distance.

\subsection{Blowing up, divisors and (non) complete linear systems}  \label{sToolsVirtual}
%\subsection{Irreducible decompositions in linear systems}

One of the key tools of the construction of the codes from Del Pezzo surfaces are the blowing-up or the blowing down depending
on the sense of the arrows.

Let~$\pi : Y \to X$ be a sequence of blowing ups where all the surfaces involved are supposed to be smooth, except the last one~$X$ which
is only supposed to be normal. Such a morphism leads to two natural maps involving different kinds of divisors and divisor class groups.

$\bullet$ First, starting from a Cartier divisor of~$X$, the pullback~$\pi^* D$ is the Cartier divisor on~$Y$ defined locally
by~$(\pi^{-1}U_i,f_i\circ\pi)$. This lead to a morphism~$\pi^*: \divcartier(X) \longrightarrow \divcartier(Y)$.

$\bullet$ Secondly, it can be shown that if~$C$ irreducible effective Weil divisor of~$Y$, then~$\pi(C)$ is either a point
or an irreducible effective Weil divisor of~$X$. Then the map~$\pi_*: \divweil(Y) \longrightarrow \divweil(X)$
defined by~$\pi_*(C) = 0$ if~$\pi(C)$ is a point and~$\pi_*(C) = \pi(C)$ otherwise extends to a group homorphism
\cite[Chap~9, Lem~2.10]{Liu}.

$\bullet$ Moreover, for every Cartier divisor~$D$ of~$X$, one has~$\pi_*\left(\pi^* D\right) = D$
\cite[Chap~9, Prop~2.11]{Liu} where~$\pi_*$ is applied to the Weil divisor of~$Y$ associated to the Cartier divisor~$\pi^* D$.

$\bullet$ These two maps induce two homomorphisms~$\pi^* : \cldivcartier(X) \to \cldivcartier(Y)$
\cite[Chap~7, Def~1.34]{Liu} and~$\pi_* : \cldivweil(Y) \to \cldivweil(X)$ \cite[Chap~1, Th~1.4]{FultonIT}.

$\bullet$ At the level of global sections and linear systems, the map~$\pi^*$ also induces isomorphisms:
\begin{align*}
&
\begin{array}{ccc}
H^0(X,D) & \overset{\simeq}{\longrightarrow}&H^0(Y,\pi^*D)\\
s & \longmapsto & s\circ\pi
\end{array}
&
\begin{array}{ccc}
\left|D\right|_X & \overset{\simeq}{\longrightarrow}& \left|\pi^*D\right|_Y\\
C & \longmapsto & \pi^*C
\end{array}
\end{align*}
where~$D$ is a Cartier divisor on~$X$ (\cite[Exposé V, Cor~2]{Demazure}). Since~$\pi_*\left(\pi^* C\right) = C$, the inverse
of the right isomorphism is nothing else than~$\pi_*  \left|\pi^*D\right|_Y \longrightarrow \left|D\right|_X$.

\medbreak

We will need to describe the one-to-one correspondence~$\left|D\right|_X \longrightarrow \left|\pi^*D\right|_Y$, when the right divisor
is replaced by a divisor of the form~$\pi^*D - E$. Some natural sublinear systems appear.
Let us go step by step.

$\bullet$ Let~$\pi : Y \to X$ be the blowing-up of a smooth surface~$X$ at a point~$p\in X$ and let~$E$ be its exceptional divisor
on~$Y$. Since the surfaces are supposed to be smooth, we do not have to distinguish Cartier and Weil divisors.
We mainly focus on effective divisors and we call them {\em curves}.
Given~$C$ a curve on~$X$, then the pullback~$\pi^* C$ is called the {\em total transform} of~$C$,
the closure in~$Y$ of~$\pi^{-1}(C\setminus \{p\})$, denoted~$\widetilde{C}$, is called the {\em strict transform} of~$C$.
These two curves on~$Y$ are related by the relation:
\begin{align*}
\pi^*C = \widetilde{C} + m_p(C) E,
\end{align*}
where~$m_p(C)$ denote the multiplicity of~$C$ at the point~$p$. In particular, for any~$n\geq 0$, the divisor~$\pi^* C - n E$
is effective if and only if~$m_p(C) \geq n$.

This permits to relate the complete linear system~$|\pi^*D - nE|$ on~$Y$
to an uncomplete one on~$X$, that is~$|D-np|$ the space of curves of~$|D|$ which pass through~$p$ with multiplicity at least~$n$.
In fact this shows that the map~$C \mapsto \pi^*C - nE$ leads to a one-to-one correspondence
from~$|D-np|$ to~$|\pi^*D - nE|$
(the other way around, it says that the blowing-up permits to turn uncomplete linear systems into complete ones).

$\bullet$ The same is true if we blow up several points.
Let~$\pi : Y \to X$ be the blowing-up of a smooth surface~$X$ at some points~$p_1,\ldots,p_r$, and let~$E_1,\ldots,E_r$ be the
exceptional divisors. For~$D$ a divisor on~$X$. Let~$\left|D - n_1 p_1 - \cdots - n_r p_r\right|$ denotes the sub-linear system of the
complete
linear system~$|D|$ consisting of curves of~$|D|$ which pass through~$p_1,\ldots,p_r$ with multiplicities at least~$n_1,\ldots,n_r$.
The blowing-up permits to turn this incomplete linear system into a complete one: there is a one-to-one
correspondence between (\cite[loc. cit.]{Hartshorne}, \cite{Casas}),
\begin{align}\label{eqVirtualTransform}
&\begin{array}{ccc}
\left|D - n_1 p_1 - \cdots - n_r p_r\right| & \longrightarrow & \left|\pi^*D - n_1 E_1 - \cdots - n_r E_r\right|\\
C & \longmapsto & C^\sharp
\end{array}
&
&\text{where}
&
&C^\sharp \overset{\text{def.}}{=} \pi^* C - n_1 E_1 - \cdots - n_r E_r.
\end{align}
This curve~$C^\sharp$ is sometime called the {\em virtual transform} of~$C$. The total, strict and virtual transforms
are thus related by:
\begin{align*}
&\pi^* C = \widetilde{C} + \sum_{i=1}^r m_{p_i}(C)E_i = C^\sharp + \sum_{i=1}^r n_iE_i
&
&\Longrightarrow
&
&C^\sharp = \widetilde{C} + \sum_{i=1}^r\left(m_{p_i}(C)-n_i\right)E_i.
\end{align*}
In particular the virtual and the strict transforms coincide when~$m_{p_i}(C) = n_i$ for all~$i$.

%In terms of divisor class group, one has an isomorphism:
%\begin{align*}
%\begin{array}{ccc}
% \Pic(X) \oplus \Z & \overset{\simeq}{\longrightarrow} & \Pic(\widetilde{X})\\
% (D, n) & \longmapsto & \pi^*D - nE
%\end{array}
%\end{align*}
%Blowing up~$X$ at~$p$ permits to turn some linear systems with~$p$ as a base point into complete linear systems. More precisely,
%let~$D$ a divisor of~$X$ and let~$\left|D\right|$ the associated complete linear system. As in Hartshorne~\cite[Chap~V,\S4]{Hartshorne},
%we denote by~$|D-np|$ the sublinear system consisting of divisors~$C\in\left|D\right|$ which pass through the point~$p\in X$ with
%multiplicity at least~$n$:
%$$
%\left|D-n_1p_1\right|
%=
%\left\{C\in\left|D\right|, m_{p_1}(C) \geq n_1\right\}
%$$
%where~$m_p(C)$ denotes the multiplicity of the curve~$C$ at the point~$p$.
%Then there is a natural one-to-one correspondence
%\begin{align*}
%&\begin{array}{ccc}
%  \left|D-np\right|_X & \longrightarrow & \left|\pi^*D - nE\right|_{\widetilde{X}}\\
%  C                & \longmapsto     & C^\sharp
%\end{array}
%&
%&\text{where}
%&
%&C^\sharp \overset{\text{def.}}{=} \pi^*C - nE.
%\end{align*}
%One can easily generalize these definitions with several points each one with a given multiplicity. 

$\bullet$ This one-to-one correspondence is still true if some points in the sequence of blowing ups are infinitely near points,
that is when
some~$p_j$ lies on the exceptional divisor of the blow up of another point~$p_i$. In order to describe this, we need to carefully define the sub-linear system associated to
a family of infinitely near points. Let us start with only two points: if~$p_1 \prec p_2$, that is if~$p_2$ lies on the exceptional
curve~$E_1$ above~$p_1$, then for~$n_1,n_2>0$, the sub-linear system of curves passing through~$p_1$ and~$p_2$ with multiplicities
at least~$n_1$ and~$n_2$ is defined by:
\begin{align*}
\left|D-n_1p_1-n_2p_2\right|
\overset{\text{def.}}{=}
\left\{C\in\left|D-n_1p_1\right|, m_{p_2}(\pi^*(C)-n_1E_1) \geq n_2\right\}
=
\left\{C\in\left|D-n_1p_1\right|, m_{p_2}(C^\sharp) \geq n_2\right\}.
\end{align*}
In particular the sub-system~$\left|D-p_1-p_2\right|$ contains all the curves of~$|D|$ that pass through~$p_1$ with tangent line at~$p_1$
equal to~$p_2$ union all the curves of~$|D|$ singular at~$p_1$; indeed, in the last
case~$C^\sharp = \pi^*(C) - E_1 = \widetilde{C} + \left(m_{p_1}(C)-1\right)E_1$ has~$E_1$ as a component and thus passes through~$p_2$
(one can check that the conditions~$p_1\in C$ and~$p_2 \in \widetilde{C}$ are not linear, which is why we choose~$p_2\in C^\sharp$ instead).

In the same way, if~$p_1 \prec p_2 \prec \cdots \prec p_r$, one can define recursively, the sub-linear
system~$\left|D-n_1p_1-\cdots-n_rp_r\right|$. With this definition, the one-to-one correspondence $\eqref{eqVirtualTransform}$
is still true.

\medbreak

Let us end by an example: the case~$X = \P^2$. If~$\ell$ denotes the class a line, and if~$E_0$ is the pullback of~$\ell$ in~$Y$,
then the curves of the
complete linear~$|d E_0 - n_1 E_1 - \cdots- n_r E_r|_Y$ on~$Y$ corresponds bijectively
to~$|d\ell - n_1 p_1 - \cdots - n_r p_r|$ the (projective) vector space consisting of plane curves of degree~$d$
passing through~$p_1,\ldots,p_r$ with multiplicities at least~$n_1,\ldots,n_r$. For small degrees, it turns out that
the irreducible decompositions of such curves can be easily described.

%\begin{lem}
%Let~$\pi : Y \to \P^2$ be the blowing-up of (possibly infinitely near)~$p_1,\ldots,p_r$. Then
%the classes~$d E_0 - \sum_{i=1}^r m_i E_i \in \Pic(\overline{Y})$ that are represented by irreducible curves are:
%$$
%\begin{array}{|r|l|l|l|}
%\hline
%d & \text{class} & \text{irred rep.} & \text{condition}\\
%\hline
%0 & E_{i_1} & E_{i_1} &\\
%  & E_{i_1} - E_{i_2} & \widetilde{E}_{i_1} & p_{i_2} \succ p_{i_1}\\
%\hline
%1 & E_0 & \widetilde{\ell}&\\
%  & E_0-E_{i_1} & \widetilde{\ell}_{i_1} &\\
%  & E_0-E_{i_1}-E_{i_2} & \widetilde{\ell}_{i_1i_2} &\\
%  & E_0-E_{i_1}-E_{i_2}-E_{i_3} & \widetilde{\ell}_{i_1i_2i_3} & \text{$p_{i_1},p_{i_2},p_{i_3}$ colinear}\\
%\hline
%2 & 2E_0 & \widetilde{q} &\\
%  & 2E_0 - \sum_{j=1}^r E_{i_j} \; 1\leq r\leq 5 & \widetilde{q}_{i_1,\ldots,i_r} &\\
%  & 2E_0 - \sum_{j=1}^6 E_{i_j} & \widetilde{q}_{i_1,\ldots,i_6} &\text{$p_{i_1},\ldots,p_{i_6}$ conconic}\\
%\hline
%\end{array}
%\qquad
%\begin{array}{l}
%\text{for all } j, \; i_j \geq 1,
%\\
%\text{if } j \not= j', \; i_j \not= i_{j'}.
%\end{array}
%$$
%\end{lem}

\subsection{Blowing up and evaluation codes} \label{sCodeBlowUpDown}

Let us return to codes and compare the evaluation codes~$\Ccal_X(D,X(k))$ and~$\Ccal_Y(\pi^*D,Y(k))$.

\begin{prop}\label{propCodeBlowingup}
Let~$X$ be a normal surface, let~$p$ be a point of~$X$ and let~$\pi : Y \to X$
be the blowing-up of~$X$ at~$p$. We denote by~$\EE$ the divisor sum of the exceptional curves.
\begin{enumerate}
\item\label{itemCodesBlowingupNR} If~$p$ is of degree~$> 1$, then the codes~$\Ccal_X(D,X(k))$ and~$\Ccal_Y(\pi^*D,Y(k))$ are equivalent; moreover
the code~$\Ccal_Y(\pi^*D - n\EE,Y(k))$ can be identified with the sub-code of~$\Ccal_X(D,X(k))$ where only the global section having
multiplicity at least~$n$ at~$p$ are evaluated.
\item\label{itemCodesBlowingupR} If~$p$ is rational, then the code~$\Ccal_Y(\pi^*D - n\EE,Y(k))$ can be identified with the sub-code
of~$\Ccal_X(D,X(k)\setminus\{p\})$ where only the global sections having multiplicity at least~$n$ at~$p$ are evaluated and to which we add
the following $(q+1)$ coordinates: the evaluations at rational points of~$\P^1$ of the homogeneous component of degree~$n$
of the local equation at~$p$ of the section.
\end{enumerate}
\end{prop}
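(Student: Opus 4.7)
The strategy for both parts is to exploit the isomorphism $\pi^*\colon H^0(X,D)\xrightarrow{\sim} H^0(Y,\pi^*D)$ recalled in~\S\ref{sToolsVirtual}, together with the identification of $H^0(Y,\pi^*D-n\EE)$ with the subspace of $H^0(X,D)$ consisting of sections of multiplicity $\geq n$ at $p$. This latter identification comes directly from the fact that, in a chart $V$ of $Y$ on which $\pi^*D$ is given by $f\circ\pi$ and $\EE$ by $u$, a section $s$ of $\pi^*D-n\EE$ is exactly a section of $\pi^*D$ such that $s\cdot(f\circ\pi)$ vanishes to order $\geq n$ along $\{u=0\}$; via $\pi^*$ this is equivalent to the pulled-back section $t$ satisfying $\operatorname{mult}_p(tf)\geq n$. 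Once this set-theoretic identification is in place, it remains to compare the evaluation coordinates on $Y(k)$ and on $X(k)$.

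For~(\ref{itemCodesBlowingupNR}), when $p$ has degree $>1$, the geometric exceptional divisor $\EE\otimes\overline{k}$ is the disjoint union of the $\deg(p)$ Galois conjugate $\P^1$'s lying above the conjugates of $p$, and none of these components is defined over $k$; hence $\EE(k)=\emptyset$. Since $\pi$ restricts to an isomorphism $Y\setminus\EE\xrightarrow{\sim} X\setminus\{p\}$ and $p\notin X(k)$ either, the map $\pi$ induces a bijection $Y(k)\xrightarrow{\sim} X(k)$. Evaluating corresponding sections at corresponding points then yields equivalent codes (the possible rescalings coming from the choices of local equations $f_{i_p}$ are harmless, as noted after Definition~\ref{defCode}). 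The sub-code assertion is immediate from the preceding identification of $H^0(Y,\pi^*D-n\EE)$.

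For~(\ref{itemCodesBlowingupR}), the set $Y(k)$ decomposes as the disjoint union $\bigl(X(k)\setminus\{p\}\bigr)\sqcup \EE(k)$ where $\EE\simeq\P^1_k$ contributes exactly $q+1$ rational points. The coordinates at points of the first summand coincide with the evaluations at corresponding points of $X$, as in~(\ref{itemCodesBlowingupNR}). The heart of the argument is the computation of the $q+1$ remaining coordinates. Choosing local coordinates $(x,y)$ on $X$ centred at $p$, expand $tf=\sum_{k\geq n}h_k(x,y)$ with $h_k$ homogeneous of degree $k$, and work in the affine chart of $Y$ with coordinates $(u,v)$ and $\pi(u,v)=(u,uv)$, in which $\EE=\{u=0\}$ and a local equation for $\pi^*D-n\EE$ is $(f\circ\pi)/u^n$. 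Then the regular function
\[
s\cdot\frac{f\circ\pi}{u^n}=\frac{(tf)\circ\pi}{u^n}=\sum_{k\geq n}u^{k-n}\,h_k(1,v)
\]
restricts along $\EE$ to $h_n(1,v)$. Covering $\EE\simeq\P^1$ with the two standard affine charts of the blowup (the other giving the symmetric expression $h_n(u',1)$) shows that the evaluations of $s$ on $\EE(k)$ recover exactly the $q+1$ values of the homogeneous polynomial $h_n$ at the rational points of $\P^1$, which is the stated description.

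The main obstacle is thus the local-coordinate computation in~(\ref{itemCodesBlowingupR}) and the verification that the two affine charts glue coherently to produce the evaluations of $h_n$ on the full $\P^1=\EE$; everything else reduces to bookkeeping on $Y(k)$ and to unpacking the isomorphism $\pi^*$.
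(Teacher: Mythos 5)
Your proposal is correct and follows essentially the same route as the paper: the pullback isomorphism on global sections identifying $H^0(Y,\pi^*D-n\EE)$ with the multiplicity-$\geq n$ subspace of $H^0(X,D)$, the bijection $Y(k)\simeq X(k)$ when $p$ is non-rational, and the two-chart local computation on the blowup showing that the restriction of the section to $\EE\simeq\P^1$ is the degree-$n$ homogeneous component of the local equation. The only differences are cosmetic (a different but equivalent choice of affine charts for the blowup, and a minor notational clash in reusing $k$ as both the base field and a summation index).
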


\begin{proof}
\eqref{itemCodesBlowingupNR} The map~$s \mapsto s \circ \pi$ is a one-to-one correspondence from the
spaces of functions~$H^0(X,D)$ to~$H^0(Y,\pi^*D)$. Since the blown points are not rational, the map~$\pi$ induces a one-to-one
correspondence from~$Y(k)$ to~$X(k)$. Thus the codes~$\Ccal_X(D,X(k))$ and~$\Ccal_Y(\pi^*D,Y(k))$ must be equivalent.
By the previous correspondence the global sections of~$H^0(Y,\pi^*D - n\EE)$ are in bijection with the global sections
of~$H^0(X,D)$ that pass through~$p$ with multiplicity at least~$n$ and the last statement follows.

$\eqref{itemCodesBlowingupR}$ The set~$Y(k)$ is in one-to-one correspondence
with~$\left(X(k) \setminus \{p\}\right) \cup \EE(k)$ and we only have to compute the evaluations at the points of~$E(k)$.
We choose an open neighbourhood~$U\subset \A^2_{(x,y)}$ of~$p$ in which~$p = (0,0)$ and~$D$ has local equation~$f(x,y)=0$.
Then~$\pi^{-1}(U) \subset U \times \P^1_{(u:v)}$ with equation~$xv=yu$; there are two affine charts,~$\pi^{-1}(U) = V_1\cup V_2$,
with~$V_1\subset\A^2_{(y,u)}$ (resp.~$V_2 \subset \A^2_{(x,v)}$) with~$\pi(y,u) = (yu,y)$ (resp.~$\pi(x,v) = (x,xv)$). On~$V_1$,
the divisor~$\pi^*D-n\EE$ has local equation~$\frac{f\circ\pi}{y^n} = \frac{f(yu,y)}{y^n}$. Let~$s\circ\pi\in H^0(Y,\pi^*D-n\EE)$
then~$sf\in \O_X(U)$ has multiplicity at least~$n$ at~$p$, that is~$sf(x,y) = p_n(x,y) + p_{n+1}(x,y) + \cdots$, where~$p_n$
is homogeneous of degree~$n$. Thus~$\frac{sf\circ\pi}{y^n} = \frac{p_n(yu,y) + p_{n+1}(yu,y) + \cdots}{y^n}
= p_n(u,1) + y q(u,y)$. Evaluating at the point~$(0,u)\in \EE\cap V_1$, the section- has value~$p_n(u,1)$. The same is true on~$V_2$.
\end{proof}

The examples below provide many examples of this blowing-up operation, especially the one in section~\ref{sDeg4D5}.

%\subsection{Elementary properties}
%
%\begin{enumerate}
%\item If a quadric contains three align (maybe infinitely near) points then it contains the line passing through these points.
%\item An union of conjugate lines of~$\P^2$ contains at most one rational points (since such a point must be at the intersection of all
%these lines).
%\item The union of three rational lines in~$\P^2$ contains either~$3q+1$ points if the three lines meet at a (single) point, or~$3q$ points is they do not meet (each line contains $(q+1)$~rational points, if they meet the meeting point is counted three times and the number of points is thus~$3(q+1) - 2$, if they do not meet, the three $2\times 2$ meeting points are counted twice and the number of points is
%thus~$3(q+1) - 3$).
%\end{enumerate}

\section{Anticanonical codes from weak del Pezzo surfaces}\label{sAnticanonicalCodes}

In this section we describe some evaluation codes from weak del Pezzo surfaces, we compute their parameters and for some of them
a generator matrix. The base field is a finite field~$\F_q$ without any other hypothesis excepts sporadically not being too small ($\F_2$
or~$\F_3$).

 In the first subsection, the general construction is given. We also fix many notations that will be used until the end
of the paper.

\subsection{General description of the codes and of the main steps of their studies}

The evaluation codes (definition~\ref{defCode}) studied in the sequel are the ones corresponding to the following choices.

\begin{defi}\label{defiCode}
Let~$X$ be a weak del Pezzo surface over~$\F_q$. We call {\bfseries anticanonical code associated to~$X$} the
evaluation code~$\Ccal_{X_s}\left(-K_{X_s}, X_s(\F_q)\right)$, where~$X_s$ is the anticanonical model of~$X$,~$-K_{X_s}$ is the
anticanonical (Cartier) divisor on~$X_s$, and where~$X_s(\F_q)$ denotes the set of rational points of~$X_s$.
\end{defi}
%\begin{itemize}
%\item the surface is the singular anticanonical model~$X_s$ of a weak del Pezzo surface~$X$ of degree~$d$ over~$\F_q$,
%\item the Cartier divisor is the anticanoncial divisor~$-K_{X_s}$ (the opposite of the canonical divisor),
%\item the evaluation points are all the rational points~$X_s(\F_q)$.
%\end{itemize}

Note that we could have considered the evaluation codes~$\Ccal_X(-K_X,X(\F_q))$ with the same del Pezzo surfaces, but this leads to
worth codes.

In a concomitant work \cite{Classification}, we have computed explicit models for all the {\em arithmetic types}
of del Pezzo surfaces over a finite field (these types lead to a classification that is coarser than the isomorphism one
but that permit to distinguish the main arithmetic
properties of the weak del Pezzo surfaces).
Taking advantage of this knowledge, we select eight types of weak del Pezzo that are well suited for coding applications.
For each example, our starting point is a blowing-up model of the weak del Pezzo surface, then we study the
parameters length, dimension, minimum distance ($[n,k,\dmin]_q$) of the associated anticanonical code and last we give a generator matrix
(or a program to compute it).

\paragraph{Configuration to blow-up.~---} The explicit description of the surfaces~$X$ and~$X_s$ always starts from the projective
plane~$\P^2$: we first blow up a family of
(possibly infinitely near) points~$p_1,\ldots,p_r$ to obtain a smooth surface~$Y$; then we may blow down a family of (non intersecting)
exceptional curves on~$Y$ to obtain the smooth surface~$X$. Last~$X$ is mapped to a projective space corresponding to the anticanoncial
divisor to lead to the singular surface~$X_s$. To sum up, we have the following diagram:
\begin{equation}\label{eqP2YXXs}
\begin{tikzpicture}[>=latex,baseline=(M.center)]
\matrix (M) [matrix of math nodes,row sep=0.5cm,column sep=0.7cm]
{
|(Y)| Y         &         & \\
                & |(X)| X &  \\
|(P2)| \P^2     &         & |(Xs)| X_s\subset\P^{\deg(X)} \\
};
\draw[->] (Y) -- (P2) node[midway,left] {$\eclatement$} ;
\draw[->] (Y) -- (X) node[midway,above right] {$\contraction$} ;
\draw[->] (X) -- (Xs.north west) node[midway,above right] {$\plongement$} ;
%\draw[->] (Y) to[bend left=50] ($(Xs.north west) + (1em,0)$) node[above,yshift=2em,midway] {$\YtoXs$};
\draw[dashed,->] (P2) -- (Xs) node[midway, below] {$\varepsilon$};
\end{tikzpicture}
\qquad
\begin{array}{rl}
\eclatement & \text{is a sequence of blowing ups at points~$p_1,\ldots,p_r$,}\\
\contraction & \text{is a sequence of contractions of}\\
             & \text{$(-1)$-curves~$F_1,\ldots,F_s$,}\\
\plongement & \text{is the morphism~$\plongement_{-K_X}$ associated to}\\
                                & \text{the anticanonical divisor~$-K_X$ of~$X$,}\\
%\YtoXs & = \plongement\circ\contraction\\
\deg(X) & \text{is the degree of the del Pezzo surface~$X$, i.e.~$K_X^{\cdot 2}$.}
\end{array}
\end{equation}
All the surfaces and maps are defined over the base field~$\F_q$.
The solid arrows~$\eclatement,\contraction,\plongement$ denote maps that are morphisms whereas the dashed arrow~$\varepsilon$ denotes
a map which is a rational one. The need to introduce the auxiliary surface~$Y$ is due to the fact that some times, the surface~$X$ we
want to work with cannot be constructed directly by blowing up the plane at some points. Some contractions may be necessary
in order to work with applications that are defined over~$\F_q$ (and not only over~$\overline{\F}_q$); however this detour is not always
useful and in some examples, one has~$X=Y$ and the map~$\contraction$ is
only the identity.

\medbreak

Two of the parameters~$[n,k,\dmin]_q$ of the associated anticanonical code are easy to compute.
\begin{itemize}
\item The length is nothing else than~$\#X_s(\F_q)$. Following the process of blowing ups and down above, it is not difficult to
 compute this number since blowing up a point adds $q$ rational points or does not change the number of rational points depending on whether
the point is rational or not.
\item The dimension is nothing else than~$d+1$, where~$d$ is the degree of the del
Pezzo surface~$X$, unless the evaluation map is not injective. This can only occur
if~$\#X_s(\F_q) \leq N_q\left(-K_{X_s}\right)$ and we compute last number to estimate the minimum distance.
It turns out that the evaluation map is always injective except if the base field is~$\F_2$ or~$\F_3$ in some cases that are excluded.
\end{itemize}
As usual, the last parameter, the minimum distance, requires much more preparatory works.

\paragraph{Computation of the divisor class groups.~---} For these computations, the general ambient space is the geometric
divisor class group of~$Y$, which is known to be equal to~$\cldivweil(\overline{Y}) = \Z E_0 \oplus \Z E_1 \oplus \cdots \oplus \Z E_r$,
where, as usual,~$E_i$ denotes the exceptional curve above~$p_i$ in the sequence of blowing ups~$\pi$. In this lattice, one can
easily identify the effective roots in~$Y$, but also in~$X$ and we are able to give a basis of the
sub-lattice~$\overline{\RR}$ generated by the effective roots of~$X$ over~$\overline{\F}_q$. The other (geometric) Cartier and Weil divisor
class groups are then given by:
\begin{align*}
&\cldivweil(\overline{X}) = \left(\Z F_1 \oplus \cdots \oplus\Z F_r\right)^\perp,
&
&\cldivcartier(\overline{X}_s) = \overline{\RR}^\perp,
&
&\cldivweil(\overline{X}_s) = \cldivweil(\overline{X})/\overline{\RR}.
\end{align*}
(the left orthogonal is computed in the whole~$\cldivweil(\overline{Y})$, the middle one in the sub-lattice~$\cldivweil(\overline{X})$).
Using tools of section~\ref{sToolsLattices}, explicit bases and canonical embeddings of these geometric divisor class groups
can be computed. Taking into account the Galois action, one can also give bases and explicit canonical embedding bases of all
the arithmetic divisor class groups.
Depending on the examples, the computations are carried out in the geometric groups~$\cldivweil(\overline{X})$
and the Galois invariants are taken in the last step to return in~$\cldivweil(X)$ or we start to compute the Galois invariants and then
perform all the computations in~$\cldivweil(X)$. Thanks to Proposition~\ref{propCaClCl}, these two ways lead to the same results.

\paragraph{Types of decomposition into irreducible components in~$\left|-K_{X_s}\right|$.~---}
The minimum distance is related to the maximum number of rational points that can contain a (effective) curve in the linear
system~$\left|-K_{X_s}\right|$. To bound above this number of rational points, one way is to study how the curves in this linear
system decompose into irreducible components and use the exact number of points if known or the Weil bound if not on each components.
Thanks to section~\ref{sToolsVirtual}, and since~$\plongement^*K_{X_s} = K_X$,
we have the following one-to-one correspondences:
$$
\begin{array}{ccccc}
\left|-\chi^*K_X\right|_Y
&\overset{\simeq}{\longrightarrow}&
\left|-K_X\right|_X
&\overset{\simeq}{\longrightarrow}&
\left|-K_{X_s}\right|_{X_s}\\
C
&\longmapsto&
\contraction_{*}(C)
&\longmapsto&
\plongement_{*}\left(\contraction_{*}(C)\right)
\end{array}
$$
The first arrow consists in contracting the family of non-meeting exceptional curves~$F_i$, $1\leq i\leq s$, the second in contracting
the effective roots of~$X$. Thus we are reduced
to study the types of decompositions into irreducible components on the smooth surface~$Y$, which is easier. Indeed,
we know that~$-\chi^*K_X = dE_0 - \sum_{i=1}^r n_i E_i$ for some explicit~$d$ and~$n_i$'s; in fact, in all examples,~$d \in \{3,4\}$
and~$n_i \in \{1,2\}$. Since~$Y$ is the blowing up of~$\P^2$ at a family of points, thanks to section~\ref{sToolsVirtual},
curves of~$\left|-K_Y\right|$ are in one-to-one correspondence to the plane curves of a well specified (non complete) linear system
of~$\P^2$:
$$
\begin{array}{ccc}
\left|d\ell- n_1 p_1 - \cdots - n_r p_r\right| & \overset{\simeq}{\longrightarrow} & \left|-\chi^*K_X\right|\\
C&\longmapsto & C^\sharp
\end{array}
$$
We are thus
reduced to list all the types of decompositions into irreducible components of the plane curves of degree~$d$
passing through~$p_i$ with multiplicity~$n_i$. Since~$d\leq 4$, these absolutely irreducible components must be plane lines, conics,
cubics or quartics and an enumeration case by case can be done. More specifically, we follow the steps:
\begin{itemize}
\item degree by degree, we list all the possible absolutely irreducible curves that pass through some of the points~$p_i$;
\item we compute their Galois-orbits since if an absolutely irreducible component not defined
over~$\F_q$ appears in the decomposition with multiplicity~$m$, then the same holds for all its conjugates (this permits to get rid of
many curves because of their too high degree);
\item we combine all these irreducible curves to obtain plane curves in the expected sub-linear system.
\end{itemize}
In order to make easier
this step, we adopt the following notations and conventions. The letters~$\ell,q,c,t$ respectively denote plane lines,
quadrics (or conics), cubics and quartics. The indices below these letters are the numbers of the points through which the curve passes.
For example,~$\ell_{1}$ denotes a line that passes through~$p_1$ (but not through any other point),~$\ell_{123}$ a line that passes through~$p_1,p_2,p_3$
(if it exists), $q_{123456}$ a conic passing through the six points~$p_1,\ldots,p_6$ and~$\ell$ or~$q$ a line or quadric that do not
pass through any~$p_i$. The goal is then to combine all these irreducible plane curves to obtain a curve in the expected linear system.
%The Galois action on the irreducible components that are not defined over~$\F_q$ may help: if a irreducible component not defined
%over~$\F_q$ appears in the decomposition with multiplicity~$m$, then the same holds for all its conjugates.

At the end of this step, we are able to compute the maximum~$N_q\left(-K_{X_s}\right)$ to which the minimum distance is related
(proposition~\ref{propParameters}). Comparing with the number~$\#X_s(\F_q)$, this also permits us to exclude some too small values of~$q$
for which the evaluation map may fail to be injective.

\paragraph{Computation of the global sections from~$\P^2$.~---} Last, if we want to explicitly compute a generator matrix of the code,
we need to exhibit a basis of the sub linear system~$\left|d\ell-n_1p_1-\cdots-n_rp_r\right|$. Then, by construction we know
to which points of~$\P^2$ these functions have to be evaluated; in some cases we also need to add some extra evaluation points
corresponding to points on some exceptional curves. In any cases, one can compute a generator matrix. This last (concrete) description
turns the code into a code close to a Reed-Muller one: the space of polynomials to be evaluated has been restricted, some
of the evaluation points have been deleted, some others have been added.

If some readers want to use our code, we put on the second author's {\tt webpage}, a {\tt magma} program that permits to construct all the codes presented below.

\subsection{Degree~$6$, singularity of type~$\mathbf{A}_1$}\label{sDeg6A1}

This example corresponds to the type number~$3$ in degree~$6$ \cite{Classification}.

\paragraph{Configuration to blow-up.~---}
We blow up~$\P^2$ at three collinear points that are conjugate over~$\F_q$.
\begin{align*}
&\begin{tikzpicture}[baseline=0]
\draw[thick] (-1.5,0)  node[left] {$\ell_{123}$} -- (1.5,0) ;
\draw (-1,0) node {$\bullet$} node[below] {$p_1$};
\draw (0,0) node {$\bullet$} node[below] {$p_2$};
\draw (1,0) node {$\bullet$} node[below] {$p_3$};
\end{tikzpicture}
&
&p_2 = p_1^{\sigma},\quad p_3 = p_1^{\sigma^2}
\end{align*}
The resulting surface is a weak del Pezzo surface~$X$ whose anticanonical model is denoted~$X_s$. It has a unique singular point
of type~$\mathbf{A}_1$ which is necessarily rational.

\paragraph{Computation of the divisor class groups.~---} Over~$\overline{\F}_q$, one
has
\begin{align*}
&\cldivweil(\overline{X}) = \Z E_0 \oplus \Z E_1 \oplus \Z E_2 \oplus\Z E_3
&
&\text{and}
&
&-K_X = 3E_0 - E_1 - E_2 - E_3.
\end{align*}
There is a unique effective root, the strict transform
of the line~$\ell_{123}$ passing through the three points~$p_1,p_2,p_3$, and its class is~$E_0 - E_1 - E_2 - E_3$.
Then
\begin{align*}
\overline{\RR} &= \Z(E_0 - E_1 - E_2 - E_3)
&
\overline{\RR}^\perp &= \left\{a_0E_0 + a_1 E_1 + a_2 E_2 + a_3 E_3 \mid a_0+a_1+a_2+a_3 = 0\right\}\\
&& &=\Z(E_0-E_1) \oplus \Z(E_0-E_2) \oplus \Z(E_0-E_3)
\end{align*}
Both~$\overline{\RR}$ and~$\overline{\RR}^\perp$ are direct summand but~$\overline{\RR}$ and~$\overline{\RR}^\perp$ are not complementary submodules since~$\overline{\RR}\oplus\overline{\RR}^\perp$ is of
index~$2$ in~$\cldivweil(\overline{X})$. For a submodule complement to~$\overline{\RR}$, one can choose:
$$
\begin{array}{rcrcl}
\cldivweil(\overline{X}) &= &\overline{\RR} &\oplus &\left(\Z E_1 \oplus \Z E_2 \oplus \Z E_3\right)\\
a_0E_0+a_1 E_1 + a_2 E_2 + a_3 E_3 &= & a_0(E_0 - E_1 - E_2 - E_3) &+&(a_1+a_0)E_1+(a_2+a_0)E_2+(a_3+a_0)E_3
\end{array}
$$
This leads to the following isomorphism:
$$
\begin{array}{rcl}
  \cldivweil(\overline{X})/\overline{\RR} & \overset{\simeq}{\longrightarrow} & \Z E_1 \oplus \Z E_2 \oplus \Z E_3\\
  a_0E_0+a_1 E_1 + a_2 E_2 + a_3 E_3\bmod\overline{\RR} & \longmapsto & (a_0+a_1)E_1 + (a_0+a_2)E_2 + (a_0+a_3)E_3.
\end{array}
$$
Via this isomorphism, the submodule~$\cldivcartier(\overline{X}_s) = \overline{\RR}^\perp$ identifies with~$\Z(E_1+E_2)\oplus\Z(E_2+E_3)\oplus\Z(E_1+E_3)$
of invariant factors~$1,1,2$ in~$\Z E_1 \oplus \Z E_2 \oplus \Z E_3$.

Over~$\F_q$, to recover the class groups~$\cldivweil(X_s)$ and~$\cldivcartier(X_s)$, we only need to take the invariants under the Galois
action~$(E_0)(E_1E_2E_3)$, what is easy here. One has
\begin{align*}
&\cldivcartier(X_s) = \cldivcartier(\overline{X}_s)^\Gamma = \left(\overline{\RR}^\perp\right)^\Gamma \simeq \Z(3E_0-E_1-E_2-E_3) = \Z(-K_X),
\\
&\cldivweil(X_s) = \cldivweil(\overline{X}_s)^\Gamma = \left(\cldivweil(\overline{X})/\overline{\RR}\right)^\Gamma \simeq \Z(E_1+E_2+E_3).
\end{align*}
With these identifications, the canonical embedding of~$\cldivcartier(X_s)$ into~$\cldivweil(X_s)$ becomes:
$$
\begin{array}{rcccc}
 0&\longrightarrow& \cldivcartier(X_s) & \longrightarrow & \cldivweil(X_s)\\
  &               &-K_X & \longmapsto & 2(E_1 + E_2 + E_3)
\end{array}
$$
Thus both~$\cldivcartier(X_s)$ and~$\cldivweil(X_s)$ are free of rank~$1$, but~$\cldivcartier(X_s)$ is of index~$2$ into~$\cldivweil(X_s)$. This index has the
following consequence: even if~$\cldivcartier(X_s)$ is free of rank one generated by~$-K_{X_s}$, a Cartier divisor may decompose into a sum of equivalent Weil irreducible divisors. This explains why we need to investigate how elements of~$\left|-K_X\right|$ can decompose into
irreducible components and how the non ordinary weak del Pezzo surfaces we consider here differ from ordinary ones (compare
with~\cite{AntiCanonical}).

\paragraph{Types of decomposition into irreducible components in~$\left|-K_{X_s}\right|$.~---} In this example, there is no need to
introduce an auxiliary surface~$Y$ (one has~$Y=X$ and~$\chi$ is the identity with the notation of the beginning of this section).
Since~$-K_X = 3E_0-E_1-E_2-E_3$, the virtual transform composed with the push forward lead to a one-to-one correspondence:
$$
\begin{array}{ccccc}
\left|3\ell - p_1 - p_2 - p_3\right| & \longrightarrow & \left|3E_0 - E_1 - E_2 - E_3\right| & \longrightarrow & \left|-K_{X_s}\right|\\
C & \longmapsto & C^\sharp & \longmapsto & \plongement_*(C^\sharp)
\end{array}
$$
(the left linear system is on~$\P^2$, the middle one on~$X$ and the right one on~$X_s$).
%Starting from the left linear system, one can list all the kinds of decompositions into irreducible components
%that can arise; indeed the curves in this linear system are nothing else than the cubics in~$\P^2$ passing through the three
%points~$p,p^\sigma,p^{\sigma^2}$. Then we just follow these decompositions in the blowing up surface~$X$ and last in its
%anticanonical model~$X_s$. At every steps, we count the maximum of rational points that the decomposition can contain.
%We use the following notations: the letters~$\ell_{\_},q,c$ denote absolutely irreducible lines, quadrics (or conics) or
%cubics on~$\P^2$ respectively and~$\ell$ (without index) denotes the unique line passing through~$p,p^\sigma,p^{\sigma^2}$.
Then we are reduced to list all the types of decompositions into irreducible components of the curves
of~$\left|3\ell - p_1 - p_2 - p_3\right|$, the sub linear system of cubics passing through the points~$p_1,p_2,p_3$.
The orbits of lines, conics, cubics which have degree at most~$3$ and pass through some of the points~$p_i$'s are
\begin{align*}
&\ell_1\cup\ell_2\cup\ell_3,
&
&\ell_{123},
&
&c_{123},
\end{align*}
(of course, implicitly~$\ell_2=\ell_1^\sigma$, $\ell_{3} = \ell_{1}^{\sigma^2}$ where $\sigma$ is a generator of $\Gal(\overline{\F}_q/\F_q)$). Indeed an absolutely irreducible conic
$q_i$ or~$q_{ij}$ cannot be defined over~$\F_q$ and they have at least three conjugates; combining these curves with their conjugates lead
to plane curves of degree greater than~$6$ and thus they cannot appear in our case. A conic~$q_{123}$ cannot be absolutely
irreducible otherwise it would have three intersection points with the line~$\ell_{123}$. Let us combine these rational irreducible
decompositions in order to construct plane curves in the expected sub-linear system.

First suppose that the decomposition contains a line.
\begin{itemize}
\item If this line is~$\ell_1$, then
its conjugates~$\ell_2,\ell_3$ must also be geometric components; the only possibility is~$\ell_1\cup\ell_2\cup\ell_3$ (line~1 in the
tabular below) which is an element of~$\left|3\ell - p_1 - p_2 - p_3\right|$.
\item  A component~$\ell$ cannot be completed by a conic passing through
the three points and thus if there is a line in the geometric decomposition, $\ell_{123}$ must be one of them. Since~$\ell_{123}$
already passes through~$p_1,p_2,p_3$ it can be completed by any conic (irreducible or not); this leads to the decompositions
of the lines~3 to~7 in the tabular below.
\end{itemize}
Last, if the decomposition does not contain any line, it must be an irreducible cubic which
passes through the three points;
this cubic can be smooth or not and we recover the two last lines of the tabular.
$$
{\renewcommand{\arraystretch}{1.5}
\begin{array}{|l|l|l|l||l|}
\hline
&\text{$\left|3\ell-p_1-p_2-p_3\right|$} & \left|-K_X\right| & \left|-K_{X_s}\right| & \text{Max}   \\
&\text{on~$\P^2$}                       & \text{on~$X$}     & \text{on~$X_s$}        & \text{nb. of pts} \\
\hline\hline
1&\ell_1 \cup \ell_2 \cup \ell_3 &\widetilde{\ell}_1 \cup \widetilde{\ell}_2 \cup \widetilde{\ell}_3 & \bigcup_{i=1}^3 \plongement_*(\widetilde{\ell}_i) & 1\\
\hline
2&\ell_{123} \cup q,\; \ell_{123} \cap q \not\subset\P^2(\F_q) &\widetilde{\ell}_{123} \cup \widetilde{q} & \plongement_*(\widetilde{q}) & q+2\\
\hline
3&\ell_{123} \cup q,\; \ell_{123} \cap q \subset\P^2(\F_q) &\widetilde{\ell}_{123} \cup \widetilde{q} & \plongement_*(\widetilde{q}) & q\\
\hline
4&\ell_{123} \cup \ell \cup \ell',\;  \ell_{123}\cap\ell\cap\ell' \not= \emptyset &\widetilde{\ell}_{123} \cup \widetilde{\ell} \cup \widetilde{\ell}' & \plongement_*(\widetilde{\ell}) \cup \plongement_*(\widetilde{\ell}') & 2q+1\\
\hline
5&\ell_{123} \cup \ell \cup \ell',\;  \ell_{123}\cap\ell\cap\ell' = \emptyset &\widetilde{\ell}_{123} \cup \widetilde{\ell} \cup \widetilde{\ell}' & \plongement_*(\widetilde{\ell}) \cup \plongement_*(\widetilde{\ell}') & 2q\\
\hline
6&2\ell_{123} \cup \ell &2\widetilde{\ell}_{123} \cup \widetilde{\ell} \cup \bigcup_{i=1}^3 E_i & \plongement_*(\widetilde{\ell}) \cup \bigcup_{i=1}^3\plongement_*(E_i)& q+1\\
\hline
7&3\ell_{123} &3\widetilde{\ell}_{123} \cup \bigcup_{i=1}^3 2E_i &   \bigcup_{i=1}^3 2\plongement_*(E_i) & 1\\
\hline
8&c_{123},\; \text{singular} &\widetilde{c}_{123} & \plongement_*(\widetilde{c}_{123}) & q+2\\
\hline
9&c_{123},\;\text{smooth} &\widetilde{c}_{123}  & \plongement_*(\widetilde{c}_{123}) & \Nqg{q}{1}\\
\hline
\end{array}}
$$
Some comments about the three first columns of the previous tabular. The unique irreducible effective root of~$X$ is nothing else than the strict
transform~$\ell_{123}$ and this explains why this curve disappears in the third column: this line on~$X$
is mapped by~$\plongement_*$ to the unique singular point~$s\in X_s$. Note also that except in the cases~6 and~7,
all the curves have exactly multiplicities~$1$ at the~$p_i$ and thus their strict or virtual transforms are equal. On the contrary,
in the remaining cases, the curves on~$X$ are the virtual transforms of the ones on~$\P^2$.
Last, in the decomposition~$\plongement_*(\widetilde{\ell})\cup\plongement_*(\widetilde{\ell}')$, it is worth noticing that irreducible
components involves divisors that are not Cartier divisors but only Weil ones on~$X_s$. Indeed the class of~$\widetilde{\ell}$
in~$\cldivweil(X)$ is~$E_0$, which is mapped to~$E_1 + E_2 + E_3$ in~$\cldivweil(X_s)$, which is not an element of~$\cldivcartier(X_s)$
(equivalently~$E_0\not\in\RR^\perp$).

Now we make some comments on the numbers of rational points.

{\bfseries Case~$1$.} Since the lines~$\ell_1,\ell_2,\ell_3$ are conjugate a rational point on their union must be at their intersection
which contains at most one point. On~$X$ the strict transforms~$\widetilde{\ell}_1,\widetilde{\ell}_2,\widetilde{\ell}_3$
do not meet the root~$\ell_{123}$ and the contraction does not add any point.

{\bf Cases~$2,3,4$ \& $5$.} If the two points of~$q\cap\ell_{123}$ are not rational then they are still unrational
on~$\widetilde{q}\cap\widetilde{\ell}_{123}$ and they are contracted to the singular point~$s$
in~$X_s$ and thus the image~$\plongement_*(\widetilde{q})$ has one more rational point; otherwise, if the two points of~$q\cap\ell_{123}$
are rational then they are
contracted in~$X_s$ and thus the image~$\plongement_*(\widetilde{q})$ looses a rational point. The same is true on lines~$4$ and~$5$.

{\bf Cases~$6$ \& $7$.} The line~$\widetilde{\ell}_{123}$ is contracted by~$\plongement_*$ and there are no rational points on
the lines~$E_i$.

{\bf Cases~$8$ \& $9$.} The starting cubic~$c_{123}$ has $(q+1)$ or less than~$\Nqg{q}{1}$ rational points depending on whether it is
singular or smooth. On~$X_s$ the number of rational points of~$\plongement_*(\widetilde{c}_{123})$ is increased by~$1$ since the
line~$\ell_{123}$ meets the cubic at three conjugate points.
The multiplicity of intersection of~$c_{123}$ and~$\ell_{123}$ at each
point~$p_i$ is one (since otherwise, these two curves would have too many intersection points counting with multiplicities). Therefore,
the blowing ups at~$p_1,p_2,p_3$ separate the strict transforms~$\widetilde{\ell}_{123}$ and~$\widetilde{c}_{123}$.
Thus~$\widetilde{c}_{123}$ and~$\plongement_*(\widetilde{c}_{123})$ are isomorphic and have the same number of rational points.
Finally, we remark that for every~$q$, one has~$\Weil \leq 2q+1$ (with equality if and only if~$q\in\{2,3,4\}$)
and thus:
$$
N_q\left(-K_{X_s}\right) = 2q+1.
$$
Last we note that, except for the cases~$1$ and~$9$, all the maximum numbers of points are in fact exact
numbers of points. Thus we are not far from having the distribution of weights if the code. 

\medbreak

Since~$p_1,p_2,p_3$ are not rational, the three blowing ups do not add any rational point and~$\#X(\F_q) = q^2+q+1$. Then,
the root~$\widetilde{\ell}_{123}$ is contracted via the anticanonical morphism and thus~$\#X_s(\F_q) = q^2+1$.
Except if~$q=2$, one has~$\#X_s(\F_q) > N_q(-K_{X_s})$ and the evaluation map is injective. With this choice of weak del Pezzo surface,
the code of definition~\ref{defiCode} satisfies the following proposition.

\begin{prop}
Let~$p_1,p_2,p_3$ be conjugate collinear point in~$\P^2_{\F_q}$, with~$q\not=2$. The anticanonical
%code~$\Ccal_{X_s}\left(-K_{X_s},X_s(\F_q)\right)$
code associated to the weak del Pezzo surface obtained
by blowing up these points has parameters~$[q^2+1,7,q^2-2q]$.
\end{prop}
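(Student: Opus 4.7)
The plan is to collect the three parameters from the case analysis carried out immediately above the statement. The length $n=q^2+1$ has essentially already been obtained: the three blown up points are non-rational, so $\#X(\F_q)=q^2+q+1$, and contracting the unique effective root $\widetilde{\ell}_{123}$ (which carries $q+1$ rational points, all distinct from the $p_i$) to the singular point $s$ collapses them into a single $\F_q$-point.

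For the dimension, I would invoke Riemann--Roch together with the standard vanishing $H^i(X,-K_X)=0$ for $i\geq 1$ on a weak del Pezzo surface (valid since $-K_X$ is big and nef), which gives $\dim H^0(X,-K_X)=K_X^{\cdot 2}+1=7$; the pullback under $\plongement$ identifies this space with $H^0(X_s,-K_{X_s})$. Injectivity of the evaluation map then follows from the case analysis: a nonzero section vanishes on a curve of $|-K_{X_s}|$ supported on at most $N_q(-K_{X_s})=2q+1$ rational points, whereas the hypothesis $q\neq 2$ gives $q^2+1>2q+1$, so at least one evaluation point lies off the curve. Hence $k=7$.

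The minimum distance splits into a lower bound and a matching construction. Proposition~\ref{propParameters} yields directly $\dmin\geq n-N_q(-K_{X_s})=q^2-2q$. To see this bound is sharp, I would realize a codeword of weight exactly $q^2-2q$ using case~$4$ of the decomposition table: pick an $\F_q$-rational point $p\in\ell_{123}$ (the line carries $q+1$ such points, all disjoint from $\{p_1,p_2,p_3\}$) and two distinct rational lines $\ell,\ell'$ through $p$, both different from $\ell_{123}$ --- possible since $q$ such lines pass through $p$, and $q\geq 3$. The plane cubic $\ell_{123}\cup\ell\cup\ell'$ lies in $|3\ell-p_1-p_2-p_3|$, and its image in $X_s$ is $\plongement_*(\widetilde{\ell})\cup\plongement_*(\widetilde{\ell}')$: two projective lines meeting only at $s$ (the image of their common lift above $p$, which is also on $\widetilde{\ell}_{123}$), hence carrying exactly $2(q+1)-1=2q+1$ rational points. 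The associated codeword has weight $n-(2q+1)=q^2-2q$.

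The main obstacle --- the case-by-case decomposition analysis that pins down $N_q(-K_{X_s})=2q+1$ --- has already been handled, so the remaining work is bookkeeping. The one real subtlety is the injectivity argument, which is precisely why the statement must exclude $q=2$: for $q=2$ one has $\#X_s(\F_q)=N_q(-K_{X_s})=5$, and a section whose divisor exhausts all rational points would give a nonzero element in the kernel of evaluation.
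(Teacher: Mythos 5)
Your proof is correct and follows essentially the same route as the paper: length from counting points through the blow-up and contraction, dimension $d+1=7$ via injectivity of evaluation (which is where $q\neq 2$ enters), and minimum distance from $N_q(-K_{X_s})=2q+1$ established by the decomposition table. The only point you make more explicit than the paper is the sharpness of the bound $\dmin\geq q^2-2q$, via the concurrent-lines configuration of case~4; the paper leaves this implicit in its remark that the tabulated maxima are attained, so your construction is a welcome but not divergent addition.
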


\paragraph{Computation of the global sections from~$\P^2$.~---} To construct this kind of codes, one can choose~$\ell_{123}$
to be the line of equation~$Y=0$ in~$\P^2$. For
any~$\zeta \in \F_{q^3} \setminus \F_q$, the point~$p_1=(\zeta:0:1)\in\P^2$ is a degree~$3$ point whose
conjugates~$p_2=(\zeta^\sigma:0:1)$ and~$p_3=(\zeta^{\sigma^2}:0:1)$ are also in~$\ell_{123}$.
Let~$X^3+a_2 X^2 + a_1 X + a_0 \in\F_q[X]$ be the minimal polynomial of~$\zeta$ over~$\F_q$. Then, we easily verify that
$$
\left|3\ell- p_1 - p_2 - p_3\right|
=
\left\langle Y^3,Y^2X,Y^2Z,YX^2,YZ^2,YXZ,X^3+a_2 X^2Z + a_1 XZ^2 + a_0Z^3\right\rangle_{\F_q}.
$$
Last, the evaluation points are nothing else than the points of~$\P^2(\F_q) \setminus \ell_{123}(\F_q)$, plus
one point of~$\ell_{123}(\F_q)$ since the strict
transform of~$\ell_{123}$ is contracted via the anticanonical morphism. Let us denote~$(x_i:1:z_i)$, $1\leq i\leq q^2$, the first $q^2$ points,
and let us choose~$(0:0:1)\in\ell_{123}(\F_q)$, then the corresponding generator matrix of the code is:
\begin{align*}
&\begin{pmatrix}
1 & \cdots & 1 & 0\\
x_1 & \cdots & x_{q^2} & 0\\
z_1 & \cdots & z_{q^2} & 0\\
x_1^2 & \cdots & x_{q^2}^2 & 0\\
z_1^2 & \cdots & z_{q^2}^2 & 0\\
x_1z_1 & \cdots & x_{q^2}z_{q^2} & 0\\
P(x_1,1,z_1) & \cdots & P(x_{q^2},1,z_{q^2}) & a_0
\end{pmatrix},
&
&P(X,Y,Z) = X^3+a_2 X^2Z + a_1 XZ^2 + a_0Z^3.
\end{align*}
We recover the classical Reed-Muller code on~$\A^2$ of degree~$2$, augmented by one point.

%\textcolor{orange}{En faisant~$Y=1$, on retrouve le Reed-Muller de degré (total)~$2$ sur~$\A^2$ auquel on ajoute la colonne~$\begin{pmatrix}0\\\vdots\\0\\1\end{pmatrix}$. A méditer.}

\subsection{Degree~$5$, singularity of type~$2\mathbf{A}_1$}\label{sDeg52A1}

This example corresponds to the type number~$5$ in degree~$5$ \cite{Classification}.

\paragraph{Configuration to blow-up.~---} We blow up~$p_1 \prec p_2$ and~$p_3\prec p_4$, where~$p_1,p_3$ and~$p_2,p_4$ are
conjugate points of degree~$2$.
\begin{align*}
&\begin{tikzpicture}[baseline=0]
  \def\a{1.5} % horizontal radius
  \def\b{1.0} % vertical radius
  \coordinate (p) at (-\a,0); % point of tangency, P
  \coordinate (psigma) at (\a,0); % point of tangency, P
  \draw[thick,<->,gray] (-2,-0.5) node[black,below left] {$\ell_{12}$} -- (-1,0.5) node[black,left,near end,xshift=-0.5em,yshift=0.5em] {$p_2$};
  \draw[thick,<->,gray] (2,-0.5) node[black,below right] {$\ell_{34}$} -- (1,0.5) node[black,right,near end,xshift=0.5em,yshift=0.5em] {$p_4$};
  \draw (p) node {$\bullet$} node[below,yshift=-0.5em] {$p_1$};
  \draw (psigma) node {$\bullet$} node[below,yshift=-0.5em] {$p_3$};
  \draw[thick] (-2.5,0) node[left] {$\ell_{13}$} -- (2,0);
\end{tikzpicture}
&
&\begin{array}{l}p_3=p_1^\sigma\\p_4=p_2^\sigma.\end{array}
\end{align*}
Since the points~$p_2,p_4$ are infinitely near the points~$p_1,p_3$, they are represented by tangent lines or directions on the picture above. The anticanonical model~$X_s$ has two singular points of type~$\mathbf{A}_1$ that are conjugate points.

\paragraph{Computation of the divisor class groups.~---} Over~$\overline{\F}_q$, one has:
\begin{align*}
&\cldivweil(\overline{X})
=
\Z E_0 \oplus \Z E_1 \oplus \Z E_3 \oplus \Z E_2 \oplus \Z E_4
&
&\text{and}
&
&-K_X=3E_0 - E_1 - E_2 - E_3 - E_4.
\end{align*}
There are two conjugate effective roots, the strict transforms of~$E_1$ and~$E_3$ in the sequence of blowing ups; their classes
are~$E_1-E_2$ and~$E_3-E_4$ in such a way that:
\begin{align*}
\overline{\RR} &= \Z (E_1-E_2) \oplus \Z (E_3-E_4),
&
\overline{\RR}^\perp
&=\left\{a_0E_0 + a_1 E_1 + a_2 E_2 + a_3 E_3 + a_4 E_4 \mid a_1=a_2, \; a_3 = a_4\right\}\\
&&&= \Z E_0 \oplus \Z (E_1+E_2) \oplus \Z (E_3+E_4).
\end{align*}
The sub-module~$\overline{\RR}$ is a direct summand, and as a complementary sub-module one can choose:
\begin{align*}
\begin{array}{{rcrcl}}
\cldivweil(\overline{X}) & = & \overline{\RR} & \oplus & \Z E_0 \oplus \Z E_2 \oplus \Z E_4\\
\sum_{i=0}^4 a_i E_i & = & a_1(E_1-E_2) + a_3(E_3-E_4) & + & a_0 E_0 + (a_1+a_2) E_2 + (a_3+a_4)E_4.
\end{array}
%\Cl(\overline{X}) = \overline{\RR} \oplus \Z E_0 \oplus \Z E_2 \oplus \Z E_4
\end{align*}
We deduce the isomorphism:
$$
\begin{array}{rcl}
\cldivweil(\overline{X}_s) \simeq \cldivweil(\overline{X})/\overline{\RR} & \overset{\simeq}{\longrightarrow} & \Z E_0 \oplus \Z E_2 \oplus \Z E_4\\
\sum_{i=0}^4 a_i E_i\bmod{\overline{\RR}}  & \longmapsto & a_0 E_0 + (a_1+a_2) E_2 + (a_3+a_4)E_4
\end{array}.
$$
Since~$\cldivcartier(\overline{X}_s) \simeq \overline{\RR}^\perp$, this class group is a rank~$3$ free sub-group of~$\cldivweil(\overline{X})$.
Via the previous isomorphism it is mapped to the sub-group~$\Z E_0 \oplus \Z 2E_2 \oplus \Z 2E_4$,
of invariant factors~$1,2,2$.

The arithmetic groups~$\cldivcartier(X_s)$ and~$\cldivweil(X_s)$ can be computed by taking the invariants under the Galois
action which is~$(E_0)(E_1E_3)(E_2E_4)$. Via the previous isomorphism, if we set $\EE:= E_2+E_4$, the canonical embedding~$0\rightarrow\cldivcartier(X_s)\rightarrow\cldivweil(X_s)$
is only:
\begin{align*}
\underbrace{\Z E_0 \oplus \Z 2\EE}_{\simeq \cldivcartier(X_s)}
\subset
\underbrace{\Z E_0 \oplus \Z \EE}_{\simeq \cldivweil(X_s)}.
%&
%  \begin{array}{rcl}
%    \Pic(X_s) & \longrightarrow & \Cl(X_s)\\
%     a_0 E_0 + a\EE & \longmapsto & a_0 E_0 + 2\EE_2.
%  \end{array}
\end{align*}
In other terms, $\cldivcartier(X_s)$ and~$\cldivweil(X_s)$ are both free of rank~$2$ and via the canonical embedding, the first one has invariant
factors~$1,2$ inside the second one.

\paragraph{Types of decomposition into irreducible components in~$\left|-K_{X_s}\right|$.~---}
Since the two class groups are not rank one, one expects to find a wide variety of possible decompositions into irreducible components
for the curves in the linear system~$\left|-K_{X_s}\right|$. In order to list all these types, we start form~$\P^2$ and use the one-to-one
correspondences:
$$
\begin{array}{ccccc}
\left|3\ell - p_1 - p_3 - p_2 - p_4\right|
& \longrightarrow
& \left|3E_0 - E_1 - E_3 - E_2 - E_4\right|
& \longrightarrow
& \left|-K_{X_s}\right|\\
C & \longmapsto & C^\sharp & \longmapsto &\plongement_*\left(C^\sharp\right).
\end{array}
$$
The curves of the left linear system are nothing else than the plane cubics over~$\F_q$ passing through~$p_1,p_3$ that are either smooth
at~$p_1,p_3$ with tangent lines~$p_2,p_4$ respectively or singular at these points.

Our notations are the same: $\ell_{13}$ is the line~$(p_1p_3)$ which is rational, $\ell_{12}$ and~$\ell_{34}$ are the lines~$(p_1p_2)$
respectively~$(p_3p_4)$ (that is the lines of~$\P^2$ passing through~$p_1$, respectively~$p_3$, whose strict transform pass
through~$p_2$, respectively~$p_4$); these last two lines are conjugate. The orbits of lines, conics, cubics
having degree less than~$3$ and passing through some of the points~$p_i$'s are
\begin{align*}
&\ell_1\cup\ell_3,
&
&\ell_{13},
&
&\ell_{12}\cup\ell_{34},
&
&q_{13},
&
&q_{1234},
&
&c_{13},
&
&c_{1234}
\end{align*}
(of course, implicitly~$\ell_3=\ell_1^\sigma$, $\ell_{34} = \ell_{12}^\sigma$). We have just to combine these rational irreducible
decompositions in order to construct plane curves in the expected sub-linear system.

Suppose that there is at least one line in the absolute irreducible decomposition.
\begin{itemize}
\item If this line is~$\ell_{12}$, then by rationality,~$\ell_{34}$ is also an absolute irreducible component.
Since~$\ell_{12}\cup\ell_{34}$ already passes
through~$p_1,p_2,p_3,p_4$, one can complete by any rational line~$\ell$ or by the line~$\ell_{13}$
(see cases~$1$ and~$2$ in the tabular below).
\item 
If this line is~$\ell_{13}$, then the two incidence conditions at~$p_1$ and~$p_3$ are satisfied.
The complement component must be a (maybe reducible) conic whose strict transform passes through~$p_2$ and~$p_4$; this conic
must necessarily pass through~$p_1,p_3$.
This conditions suffice since the union of~$\ell_{13}$ with any conic passing through~$p_1,p_3$ is singular. The complement
can be the union~$\ell_{12}\cup\ell_{34}$ (same as case~$2$), or~$\ell_1\cup\ell_3=\ell_1^\sigma$, or~$\ell_{13}$ itself union any other line,
or twice~$\ell_{13}$, or~$q_{13}$, or~$q_{1234}$.
\item If this line is~$\ell$ a line that does not pass through the~$p_i$'s, then the complement conic must be either~$\ell_{12}\cup\ell_{34}$
as in first case, or a conic passing through the four points.
\end{itemize}
Last, if there is not any line in the absolute irreducible decomposition, then the cubic must be absolutely irreducible and it
has to pass through the four points.
% 
%In the sequel, we denote by~$\ell,\delta$ the lines~$(p_1p_3)$ and~$(p_1p_2)$ so that~$\delta^\sigma$ is the
%line~$(p_3 p_4)$. Note that the intersection point of~$\delta$ and~$\delta^\sigma$ must be rational and it is the unique
%rational point of~$\P^2$ through which these lines pass.
% On~$X$, the strict transforms~$\widetilde{\ell}$, $\widetilde{\delta}$,
%$\widetilde{\delta}^\sigma$ are irreducible exceptional curves who have classes equal
%to~$E_0-E_1-E_3$, $E_0-E_1-E_2$, $E_0-E_3-E_4$ respectively. There are two other
%irreducible exceptional curves, the exceptional curves~$E_2$ and~$E_4$. Last there are two (conjugate) effective roots, the strict
%transforms~$\widetilde{E}_1$ and~$\widetilde{E}_3$, whose classes are~$E_1-E_2$ and~$E_3-E_4$.
%Besides, these curves, some other components appear and the letters~$d$, $q$ and~$c$ denote irreducible lines, conics or cubics.

That being, the possible cubics are listed below. The irreducible effective roots of~$X$ are the (conjugate) strict
transforms~$\widetilde{E}_1$ and~$\widetilde{E}_3$; since they do not meet, their contraction lead to two (conjugate) singular
points~$s$ and~$s^\sigma$ on~$X_s$.
$$
{\renewcommand{\arraystretch}{1.5}
\begin{array}{|c|l|l|l||l|l||l|l|}
\hline
&\left|3\ell-\sum_{i_1}^4 p_i\right|  &\left|-K_X\right| & \left|-K_{X_s}\right|  & \text{Max}   \\
&\text{on~$\P^2$}                   &\text{on~$X$}     &\text{on~$X_s$}        & \text{nb. of pts}\\
\hline\hline
1&\ell_{12}\cup\ell_{34}\cup\ell
&\widetilde{\ell_{12}}\cup\widetilde{\ell_{34}}\cup\widetilde{\ell}
&\plongement_*(\widetilde{\ell_{12}})\cup\plongement_*(\widetilde{\ell}_{34})\cup\plongement_*(\widetilde{\ell})
& q+2
\\
2&\ell_{12}\cup\ell_{34}\cup\ell_{13}
&\widetilde{\ell_{12}}\cup\widetilde{\ell_{34}}\cup\widetilde{\ell}_{13}\cup\widetilde{E}_1\cup\widetilde{E}_3\cup E_2\cup E_4
&\plongement_*(\widetilde{\ell_{12}})\cup\plongement_*(\widetilde{\ell}_{34})\cup\plongement_*(\widetilde{\ell}_{13})\cup\plongement_*(E_2)\cup\plongement_*(E_4)
& q+2
\\
\hline
3&\ell_{13} \cup \ell_1\cup \ell_3
&\widetilde{\ell}_{13}\cup\widetilde{\ell}_1\cup\widetilde{\ell}_3\cup\widetilde{E}_1\cup\widetilde{E}_3
&\plongement_*(\widetilde{\ell}_{13})\cup\plongement_*(\widetilde{\ell}_1)\cup\plongement_*(\widetilde{\ell}_3)
& q+2
\\
4&2\ell_{13} \cup \ell%, (\text{rk.~$p_1\not\in d$})
&2\widetilde{\ell}_{13}\cup\widetilde{\ell}\cup\widetilde{E}_1\cup\widetilde{E}_3
&2\plongement_*(\widetilde{\ell}_{13})\cup\plongement_*(\widetilde{\ell})
& 2q+1
\\
5&3\ell_{13}
&3\widetilde{\ell}_{13}\cup 2\widetilde{E}_1\cup 2\widetilde{E}_3\cup E_2\cup E_4
&3\plongement_*(\widetilde{\ell}_{13}) \cup \plongement_*(E_2)\cup\plongement_*(\widetilde{E}_2)
& q+1
\\
6&\ell_{13} \cup q_{13}
&\widetilde{\ell}_{13}\cup \widetilde{q}_{13}\cup \widetilde{E}_1\cup \widetilde{E}_3
&\plongement_*(\widetilde{\ell}_{13})\cup\plongement_*(\widetilde{q}_{13})
& 2q+2
\\
7&\ell_{13} \cup q_{1234}
&\widetilde{\ell}_{13}\cup \widetilde{q}_{1234}\cup \widetilde{E}_1\cup \widetilde{E}_3\cup E_2\cup E_4
&\plongement_*(\widetilde{\ell}_{13})\cup\plongement_*(\widetilde{q}_{1234})\cup\plongement_*(E_2)\cup\plongement_*(E_4)
& 2q+2
\\
\hline
8&\ell \cup q_{1234}
&\widetilde{\ell}\cup \widetilde{q}_{1234}
&\plongement_*(\widetilde{\ell})\cup\plongement_*(\widetilde{q}_{1234})
& 2q+2
\\
\hline
9&c_{1234}
&\widetilde{c}_{1234}
&\plongement_*(\widetilde{c}_{1234})
&\Nqg{q}{1}\\
\hline
\end{array}}
$$
We draw all the preceding decompositions in order to illustrate what is going on.
The blowing up~$\pi : X \rightarrow \P^2$ is decomposed into two blowing ups~$\pi=\pi_2\circ\pi_1$, where~$\pi_1 : X_1 \to \P^2$ is the
blowing up at~$p_1$ and~$p_3$, and where~$\pi_2 : X \to X_1$ is the blowing up at~$p_2$ and~$p_4$. The left column is the drawing
of the starting configuration in~$\P^2$, the middle one the configuration after having blowing up~$p_1$ and~$p_3$, the right one
the configuration in~$X$. The operation from a column to the next one is the virtual transform. 
Curves drawn in gray are not part of virtual transform, curves drawn in red are the effective roots; these curves are contracted
in~$X_s$ (but we do not draw this step). In brackets, to the right of the name
of a curve, we put its self-intersection. 
We draw all the cases of the preceding tabular, except the cubic case.

In any case, one can verify that the union of the black
curves passes through~$p_1,p_3,p_2,p_4$ and that the divisor class is equal to~$-K_X=3E_0-E_1-E_3-E_2-E_4$.

\begin{center}
%1
\begin{tikzpicture}[scale=0.7]
\draw[thick] (-3,2) -- (3,2);
\draw[thick] (-3,2) node[left] {$\ell(1)$};
\draw[thick] (-1.5,0) node[below,xshift=1em] {$p_1$} node {$\bullet$};
\draw[thick] (1.5,0) node[below,xshift=-1em] {$p_3$} node {$\bullet$};
\draw[thick] (-1.5,0) -- ++(-.5,-.5);
\draw[thick] (-1.5,0) -- ++(2.5,2.5) node[above right] {$\ell_{12}(1)$};
\draw[thick,>=stealth,->] (-1.5,0) -- ++(1,1) node[midway,above,xshift=-0.5em] {$p_2$};
\draw[thick] (1.5,0) -- ++(.5,-.5);
\draw[thick] (1.5,0) -- ++(-2.5,2.5) node[above left] {$\ell_{34}(1)$};
\draw[thick][>=stealth,->] (1.5,0) -- ++(-1,1) node[midway,above,xshift=0.5em] {$p_4$};
\end{tikzpicture}
\begin{tikzpicture}[scale=0.7]
\draw[thick] (-2.5,4) -- (2.5,4);
\draw[thick] (-2.5,4) node[left] {$\widetilde{\ell}(1)$};
\draw[thick] (-1.5,2) node[below right] {$p_2$} node {$\bullet$};
\draw[thick] (1.5,2) node[below left] {$p_4$} node {$\bullet$};
\draw[thick] (-1.5,2) -- ++(-0.5,-0.5);
\draw[thick] (-1.5,2) -- ++(2.5,2.5) node[above right] {$\widetilde{\ell}_{12}(0)$};
\draw[thick] (1.5,2) -- ++(0.5,-0.5);
\draw[thick] (1.5,2) -- ++(-2.5,2.5) node[above left] {$\widetilde{\ell}_{34}(0)$};
\draw[thick] (-1.5,.5) -- (-1.5,2.5) node[above] {$E_1(-1)$};
\draw[thick] (1.5,.5) -- (1.5,2.5) node[above] {$E_3(-1)$};
\end{tikzpicture}
\begin{tikzpicture}[scale=0.7]
\draw[thick] (-2.5,4.3) -- (2.5,4.3);
\draw[thick] (-2.5,4.3) node[left] {$\widetilde{\ell}(1)$};
\draw[thick] (-1.5,2) -- ++(-0.5,-0.5) node[below left] {$E_2(-1)$};
\draw[thick] (-1.5,2) -- ++(1.3,1.3);
\draw[thick] (1.5,2) -- ++(0.5,-0.5) node[below right] {$E_4(-1)$};
\draw[thick] (1.5,2) -- ++(-1.3,1.3);
\draw[thick,red] (-1.5,.5) -- (-1.5,2.5) node[above left] {$\widetilde{E}_1(-2)$};
\draw[thick,red] (1.5,.5) -- (1.5,2.5) node[above right] {$\widetilde{E}_3(-2)$};
\draw[thick] (-.7,2.6) to[bend left] ++(1.5,2) node[above right] {$\widetilde{\ell}_{12}(-1)$};
\draw[thick] (.7,2.6) to[bend right] ++(-1.5,2) node[above left] {$\widetilde{\ell}_{34}(-1)$};
\end{tikzpicture}

%2
\begin{tikzpicture}[scale=0.7]
\draw[thick] (-2,0) -- (2,0);
\draw[thick] (-2,0) node[left] {$\ell(1)$};
\draw[thick] (-1.5,0) node[below,xshift=1em] {$p_1$} node {$\bullet$};
\draw[thick] (1.5,0) node[below,xshift=-1em] {$p_3$} node {$\bullet$};
\draw[thick] (-1.5,0) -- ++(-0.5,-0.5);
\draw[thick] (-1.5,0) -- ++(2,2) node[above right] {$\ell_{12}(1)$};
\draw[thick,>=stealth,->] (-1.5,0) -- ++(1,1) node[midway,above,xshift=-0.5em] {$p_2$};
\draw[thick] (1.5,0) -- ++(0.5,-0.5);
\draw[thick] (1.5,0) -- ++(-2,2) node[above left] {$\ell_{34}(1)$};
\draw[thick][>=stealth,->] (1.5,0) -- ++(-1,1) node[midway,above,xshift=0.5em] {$p_4$};
\end{tikzpicture}
\begin{tikzpicture}[scale=0.7]
\draw[thick] (-2,0) -- (2,0);
\draw[thick] (-2,0) node[left] {$\widetilde{\ell}(-1)$};
%\draw[thick] (-1.5,0) node[below,xshift=1em] {$p_1$} node {$\bullet$};
%\draw[thick] (1.5,0) node[below,xshift=-1em] {$p_3$} node {$\bullet$};
\draw[thick] (-1.5,2) -- ++(-0.5,-0.5);
\draw[thick] (-1.5,2) -- ++(2,2) node[above right] {$\widetilde{\ell}_{12}(0)$};
\draw[thick] (1.5,2) -- ++(0.5,-0.5);
\draw[thick] (1.5,2) -- ++(-2,2) node[above left] {$\widetilde{\delta}_{34}(0)$};
\draw[thick] (-1.5,2) node[below right] {$p_2$} node {$\bullet$};
\draw[thick] (1.5,2) node[below left] {$p_4$} node {$\bullet$};
\draw[thick] (-1.5,-0.5) -- (-1.5,2.5) node[above] {$E_1(-1)$};
\draw[thick] (1.5,-0.5) -- (1.5,2.5) node[above] {$E_3(-1)$};
\end{tikzpicture}
\begin{tikzpicture}[scale=0.7]
\draw[thick] (-2,0) -- (2,0);
\draw[thick] (-2,0) node[left] {$\widetilde{\ell}(-1)$};
%\draw[thick] (-1.5,0) node[below,xshift=1em] {$p_1$} node {$\bullet$};
%\draw[thick] (1.5,0) node[below,xshift=-1em] {$p_3$} node {$\bullet$};
\draw[thick] (-1.5,2) -- ++(-0.5,-0.5) node[below left] {$E_2(-1)$};
\draw[thick] (-1.5,2) -- ++(1.3,1.3);
\draw[thick] (1.5,2) -- ++(0.5,-0.5) node[below right] {$E_4(-1)$};
\draw[thick] (1.5,2) -- ++(-1.3,1.3);
%\draw[thick] (-1.5,2) node[below right] {$p_2$} node {$\bullet$};
%\draw[thick] (1.5,2) node[below left] {$p_4$} node {$\bullet$};
\draw[thick,red] (-1.5,-0.5) -- (-1.5,2.5) node[above left] {$\widetilde{E}_1(-2)$};
\draw[thick,red] (1.5,-0.5) -- (1.5,2.5) node[above right] {$\widetilde{E}_3(-2)$};
\draw[thick] (-1.2,2.1) to[bend left] ++(1.5,2) node[above right] {$\widetilde{\ell}_{12}(-1)$};
\draw[thick] (1.2,2.1) to[bend right] ++(-1.5,2) node[above left] {$\widetilde{\ell}_{34}(-1)$};
\end{tikzpicture}

%3
\begin{tikzpicture}[scale=0.7]
\draw[thick] (-2,0) -- (2,0);
\draw[thick] (-2,0) node[left] {$\ell_{13}(1)$};
\draw[thick] (-1.5,0) node[below,xshift=1em] {$p_1$} node {$\bullet$};
\draw[thick] (1.5,0) node[below,xshift=-1em] {$p_3$} node {$\bullet$};
\draw[thick] (-1.5,0) -- ++(-0.5,-0.5);
\draw[thick] (-1.5,0) -- ++(2,2) node[above right] {$\ell_1(1)$};
\draw[thick,>=stealth,->] (-1.5,0) -- ++(1,0.6) node[midway,above,xshift=-0.5em] {$p_2$};
\draw[thick] (1.5,0) -- ++(0.5,-0.5);
\draw[thick] (1.5,0) -- ++(-2,2) node[above left] {$\ell_1^\sigma(1)$};
\draw[thick][>=stealth,->] (1.5,0) -- ++(-1,0.6) node[midway,above,xshift=0.5em] {$p_4$};
\end{tikzpicture}
\begin{tikzpicture}[scale=0.7]
\draw[thick] (-2,0) -- (2,0);
\draw[thick] (-2,0) node[left] {$\widetilde{\ell}_{13}(-1)$};
%\draw[thick] (-1.5,0) node[below,xshift=1em] {$p_1$} node {$\bullet$};
%\draw[thick] (1.5,0) node[below,xshift=-1em] {$p_3$} node {$\bullet$};
\draw[thick] (-1.5,2) -- ++(-0.5,-0.5);
\draw[thick] (-1.5,2) -- ++(2,2) node[above right] {$\widetilde{\ell}_1(0)$};
\draw[thick] (1.5,2) -- ++(0.5,-0.5);
\draw[thick] (1.5,2) -- ++(-2,2) node[above left] {$\widetilde{\ell}_1^\sigma(0)$};
\draw[thick] (-1.5,1.2) node[below right] {$p_2$} node {$\bullet$};
\draw[thick] (1.5,1.2) node[below left] {$p_4$} node {$\bullet$};
\draw[thick] (-1.5,-0.5) -- (-1.5,2.5) node[above] {$E_1(-1)$};
\draw[thick] (1.5,-0.5) -- (1.5,2.5) node[above] {$E_3(-1)$};
\end{tikzpicture}
\begin{tikzpicture}[scale=0.7]
\draw[thick] (-2,0) -- (2,0);
\draw[thick] (-2,0) node[left] {$\widetilde{\ell}_{13}(-1)$};
%\draw[thick] (-1.5,0) node[below,xshift=1em] {$p_1$} node {$\bullet$};
%\draw[thick] (1.5,0) node[below,xshift=-1em] {$p_3$} node {$\bullet$};
\draw[thick] (-1.5,2) -- ++(-0.5,-0.5);
\draw[thick] (-1.5,2) -- ++(2,2) node[above right] {$\widetilde{\ell}_1(0)$};
\draw[thick] (1.5,2) -- ++(0.5,-0.5);
\draw[thick] (1.5,2) -- ++(-2,2) node[above left] {$\widetilde{\ell}_1^\sigma(0)$};
%\draw[thick] (-1.5,1.2) node[below right] {$p_2$} node {$\bullet$};
%\draw[thick] (1.5,1.2) node[below left] {$p_4$} node {$\bullet$};
\draw[thick,gray] (-1,1.2) -- (-2.5,1.2) node[left] {$E_2(-1)$};
\draw[thick,gray] (1,1.2) -- (2.5,1.2) node[right] {$E_4(-1)$};
\draw[thick,red] (-1.5,-0.5) -- (-1.5,2.5) node[above] {$E_1(-2)$};
\draw[thick,red] (1.5,-0.5) -- (1.5,2.5) node[above] {$E_3(-2)$};
\end{tikzpicture}

%4
\begin{tikzpicture}[scale=0.7]
\draw[thick] (-2,0) -- (2.5,0);
\draw[thick] (-2,0) node[left] {$2\ell_{13}(1)$};
\draw[thick] (-1.5,0) node[below,xshift=1em] {$p_1$} node {$\bullet$};
\draw[thick] (1.5,0) node[below,xshift=-1em] {$p_3$} node {$\bullet$};
\draw[thick] (2,-0.5) -- ++(0,2) node[above] {$\ell(1)$};
\draw[thick,>=stealth,->] (-1.5,0) -- ++(1,0.6) node[midway,above,xshift=-0.5em] {$p_2$};
\draw[thick][>=stealth,->] (1.5,0) -- ++(-1,0.6) node[midway,above,xshift=0.5em] {$p_4$};
\end{tikzpicture}
\begin{tikzpicture}[scale=0.7]
\draw[thick] (-2,0) -- (2.5,0);
\draw[thick] (-2,0) node[left] {$2\widetilde{\ell}_{13}(-1)$};
%\draw[thick] (-1.5,0) node[below,xshift=1em] {$p_1$} node {$\bullet$};
%\draw[thick] (1.5,0) node[below,xshift=-1em] {$p_3$} node {$\bullet$};
\draw[thick] (-1.5,-0.5) node[below] {$E_1(-1)$} -- ++(0,2) ;
\draw[thick] (1.5,-0.5) node[below] {$E_3(-1)$} -- ++(0,2) ;
\draw[thick] (-1.5,1.2) node[right] {$p_2$} node {$\bullet$};
\draw[thick] (1.5,1.2) node[left] {$p_4$} node {$\bullet$};
\draw[thick] (2,-0.5) -- ++(0,2) node[above] {$\widetilde{\ell}(1)$};
\end{tikzpicture}
\begin{tikzpicture}[scale=0.7]
\draw[thick] (-2,0) -- (2.5,0);
\draw[thick] (-2,0) node[left] {$2\widetilde{\ell}_{13}(-1)$};
%\draw[thick] (-1.5,0) node[below,xshift=1em] {$p_1$} node {$\bullet$};
%\draw[thick] (1.5,0) node[below,xshift=-1em] {$p_3$} node {$\bullet$};
\draw[thick,red] (-1.5,-0.5) node[below] {$\widetilde{E}_1(-2)$} -- ++(0,2) ;
\draw[thick,red] (1.5,-0.5) node[below] {$\widetilde{E}_3(-2)$} -- ++(0,2) ;
%\draw[thick] (-1.5,1.2) node[right] {$p_2$} node {$\bullet$};
\draw[thick,gray] (-1.2,0.9) -- (-2.5,2.2) node[left] {$E_2(-1)$};
%\draw[thick] (1.5,1.2) node[left] {$p_4$} node {$\bullet$};
\draw[thick,gray] (1.8,0.9) -- (0.5,2.2) node[left] {$E_4(-1)$};
\draw[thick] (2,-0.5) -- ++(0,2) node[above] {$\widetilde{\ell}(1)$};
\end{tikzpicture}

%5
\begin{tikzpicture}[scale=0.7]
\draw[thick] (-2,0) -- (2.5,0);
\draw[thick] (-2,0) node[left] {$3\ell_{13}(1)$};
\draw[thick] (-1.5,0) node[below,xshift=1em] {$p_1$} node {$\bullet$};
\draw[thick] (1.5,0) node[below,xshift=-1em] {$p_3$} node {$\bullet$};
\draw[thick,>=stealth,->] (-1.5,0) -- ++(1,0.6) node[midway,above,xshift=-0.5em] {$p_2$};
\draw[thick][>=stealth,->] (1.5,0) -- ++(-1,0.6) node[midway,above,xshift=0.5em] {$p_4$};
\end{tikzpicture}
\begin{tikzpicture}[scale=0.7]
\draw[thick] (-2,0) -- (2.5,0);
\draw[thick] (-2,0) node[left] {$3\widetilde{\ell}_{13}(-1)$};
%\draw[thick] (-1.5,0) node[below,xshift=1em] {$p_1$} node {$\bullet$};
%\draw[thick] (1.5,0) node[below,xshift=-1em] {$p_3$} node {$\bullet$};
\draw[thick] (-1.5,-0.5) node[below] {$2E_1(-1)$} -- ++(0,2) ;
\draw[thick] (1.5,-0.5) node[below] {$2E_3(-1)$} -- ++(0,2) ;
\draw[thick] (-1.5,1.2) node[right] {$p_2$} node {$\bullet$};
\draw[thick] (1.5,1.2) node[left] {$p_4$} node {$\bullet$};
\end{tikzpicture}
\begin{tikzpicture}[scale=0.7]
\draw[thick] (-2,0) -- (2.5,0);
\draw[thick] (-2,0) node[left] {$3\widetilde{\ell}_{13}(-1)$};
%\draw[thick] (-1.5,0) node[below,xshift=1em] {$p_1$} node {$\bullet$};
%\draw[thick] (1.5,0) node[below,xshift=-1em] {$p_3$} node {$\bullet$};
\draw[thick,red] (-1.5,-0.5) node[below] {$2\widetilde{E}_1(-2)$} -- ++(0,2) ;
\draw[thick,red] (1.5,-0.5) node[below] {$2\widetilde{E}_3(-2)$} -- ++(0,2) ;
%\draw[thick] (-1.5,1.2) node[right] {$p_2$} node {$\bullet$};
\draw[thick] (-1.2,0.9) -- (-2.5,2.2) node[left] {$E_2(-1)$};
%\draw[thick] (1.5,1.2) node[left] {$p_4$} node {$\bullet$};
\draw[thick] (1.8,0.9) -- (0.5,2.2) node[left] {$E_4(-1)$};
\end{tikzpicture}

%6
\begin{tikzpicture}
  \def\a{1.5} % horizontal radius
  \def\b{1.0} % vertical radius
  \def\q{2} % distance center-external point q = |OQ|
  \def\x{{\a^2/\q}} % x coordinate P
  \def\y{{\b*sqrt(1-(\a/\q)^2}} % y coordinate P
  \coordinate (O) at (0,0); % circle center O
  \coordinate (p) at (-\a,0); % point of tangency, P
  \coordinate (psigma) at (\a,0); % point of tangency, P
  \draw[thick] (O) ellipse({\a} and {\b});
  \draw[<->,gray] (-2,-0.5) -- (-1,0.5) node[left,near end,xshift=-0.5em,yshift=0.5em] {$p_2$};
  \draw[<->,gray] (2,-0.5) -- (1,0.5) node[right,near end,xshift=0.5em,yshift=0.5em] {$p_4$};
  \draw (-\x,-\y) node[xshift=-0.5cm] {$q_{13}(4)$};
  \draw (p) node {$\bullet$} node[below right,xshift=1em] {$p_1$};
  \draw (psigma) node {$\bullet$} node[below left,xshift=-1em] {$p_3$};
  \draw[thick] (-2,0) node[left] {$\ell_{13}(1)$} -- (2,0);
\end{tikzpicture}
\begin{tikzpicture}
  \def\a{1.5} % horizontal radius
  \def\b{1.0} % vertical radius
  \def\q{2} % distance center-external point q = |OQ|
  \def\x{{\a^2/\q}} % x coordinate P
  \def\y{{\b*sqrt(1-(\a/\q)^2}} % y coordinate P
  \coordinate (O) at (0,0); % circle center O
  \coordinate (q) at (-\q,0); % external point Q
  \coordinate (p) at (-\x,\y); % point of tangency, P
  \coordinate (qsigma) at (\q,0); % external point Q
  \coordinate (psigma) at (\x,\y); % point of tangency, P
  %\draw[dashed] (\x,0) |- (0,\y);
  \draw[thick] (O) ellipse({\a} and {\b});
  \draw (-\x,-\y) node[xshift=-0.5cm] {$\widetilde{q}_{13}(2)$};
  \draw[thick] ($(p)-(0,0.5)$) -- ($(p)+(0,1.3)$) node[above] {$E_1(-1)$};
  \draw[thick] ($(psigma)-(0,0.5)$) -- ($(psigma)+(0,1.3)$) node[above] {$E_3(-1)$};
  %\fill[black] (p) circle(0.05) node[below=7,right=1] {$p_2$};
  %\fill[black] (psigma) circle(0.05) node[below=7,left=1] {$p_4$};
  \draw ($(-\x,\y)+(0,0.60)$) node {$\bullet$} node[below=7,right=1] {$p_2$};
  \draw ($(\x,\y)+(0,0.60)$) node {$\bullet$} node[below=7,left=1] {$p_4$};
  \draw[thick] ($(p)+(-0.5,1)$) node[left] {$\widetilde{\ell}_{13}(-1)$} -- ($(psigma)+(0.5,1)$);
\end{tikzpicture}
\begin{tikzpicture}
  \def\a{1.5} % horizontal radius
  \def\b{1.0} % vertical radius
  \def\q{2} % distance center-external point q = |OQ|
  \def\x{{\a^2/\q}} % x coordinate P
  \def\y{{\b*sqrt(1-(\a/\q)^2}} % y coordinate P
  \coordinate (O) at (0,0); % circle center O
  \coordinate (q) at (-\q,0); % external point Q
  \coordinate (p) at (-\x,\y); % point of tangency, P
  \coordinate (qsigma) at (\q,0); % external point Q
  \coordinate (psigma) at (\x,\y); % point of tangency, P
  %\draw[dashed] (\x,0) |- (0,\y);
  \draw[thick] (O) ellipse({\a} and {\b});
  \draw (-\x,-\y) node[xshift=-0.5cm] {$\widetilde{q}_{13}(2)$};
  \draw[thick,red] ($(p)-(0,0.5)$) -- ($(p)+(0,1.3)$) node[above] {$\widetilde{E}_1(-2)$};
  \draw[thick,red] ($(psigma)-(0,0.5)$) -- ($(psigma)+(0,1.3)$) node[above] {$\widetilde{E}_3(-2)$};
  %\fill[black] (p) circle(0.05) node[below=7,right=1] {$p_2$};
  %\fill[black] (psigma) circle(0.05) node[below=7,left=1] {$p_4$};
  %\draw ($(-\x,\y)+(0,0.60)$) node {$\bullet$} node[below=7,right=1] {$p_2$};
  %\draw ($(\x,\y)+(0,0.60)$) node {$\bullet$} node[below=7,left=1] {$p_4$};
  \draw[thick,gray] ($(-\x,\y)+(-1,0.60)$) node[left] {$E_2(-1)$} -- ($(-\x,\y)+(0.3,0.60)$) ;
  \draw[thick,gray] ($(\x,\y)+(-0.3,0.60)$) -- ($(\x,\y)+(1,0.60)$) node[right] {$E_4(-1)$};
  \draw[thick] ($(p)+(-0.5,1)$) node[left] {$\widetilde{\ell}_{13}(-1)$} -- ($(psigma)+(0.5,1)$);
\end{tikzpicture}

%7
\begin{tikzpicture}
  \def\a{1.5} % horizontal radius
  \def\b{1.0} % vertical radius
  \def\q{2} % distance center-external point q = |OQ|
  \def\x{{\a^2/\q}} % x coordinate P
  \def\y{{\b*sqrt(1-(\a/\q)^2}} % y coordinate P
  \coordinate (O) at (0,0); % circle center O
  \coordinate (q) at (-\q,0); % external point Q
  \coordinate (p) at (-\x,\y); % point of tangency, P
  \coordinate (qsigma) at (\q,0); % external point Q
  \coordinate (psigma) at (\x,\y); % point of tangency, P
  %\draw[dashed] (\x,0) |- (0,\y);
  \draw[thick] (O) ellipse({\a} and {\b});
  \draw (-\x,-\y) node[xshift=-0.8cm] {$q_{1234}(4)$};
  \draw[<->,gray] ($(q)!0.25!(p)$) -- ($(q)!1.75!(p)$) node[above,near end] {$p_2$};
  \draw[<->,gray] ($(qsigma)!0.25!(psigma)$) -- ($(qsigma)!1.75!(psigma)$)  node[above,near end] {$p_4$};
  %\draw[green,thick] ($(O)!-0.3!(P)$) -- ($(O)!1.4!(P)$);
  %\fill[red] (O) circle(0.05) node[below right] {O};
  %\fill[red] (qsigma) circle(0.05) node[below left] {qsigma};
  %\fill[black] (p) circle(0.05) node[below=7,right=1] {$p_1$};
  %\fill[black] (psigma) circle(0.05) node[below=7,left=1] {$p_3$};
  \draw (p) node {$\bullet$} node[below=7,right=1] {$p_1$};
  \draw (psigma) node {$\bullet$} node[below=7,left=1] {$p_3$};
  \draw[thick] ($(p)!-0.3!(psigma)$) node[left] {$\ell_{13}(1)$} -- ($(psigma)!-0.3!(p)$);
\end{tikzpicture}
\begin{tikzpicture}
  \def\a{1.5} % horizontal radius
  \def\b{1.0} % vertical radius
  \def\q{2} % distance center-external point q = |OQ|
  \def\x{{\a^2/\q}} % x coordinate P
  \def\y{{\b*sqrt(1-(\a/\q)^2}} % y coordinate P
  \coordinate (O) at (0,0); % circle center O
  \coordinate (q) at (-\q,0); % external point Q
  \coordinate (p) at (-\x,\y); % point of tangency, P
  \coordinate (qsigma) at (\q,0); % external point Q
  \coordinate (psigma) at (\x,\y); % point of tangency, P
  %\draw[dashed] (\x,0) |- (0,\y);
  \draw[thick] (O) ellipse({\a} and {\b});
  \draw (-\x,-\y) node[xshift=-0.8cm] {$\widetilde{q}_{1234}(2)$};
  \draw[thick] ($(p)-(0,0.5)$) -- ($(p)+(0,1)$) node[above] {$E_1(-1)$};
  \draw[thick] ($(psigma)-(0,0.5)$) -- ($(psigma)+(0,1)$) node[above] {$E_3(-1)$};
  %\fill[black] (p) circle(0.05) node[below=7,right=1] {$p_2$};
  %\fill[black] (psigma) circle(0.05) node[below=7,left=1] {$p_4$};
  \draw (p) node {$\bullet$} node[below=7,right=1] {$p_2$};
  \draw (psigma) node {$\bullet$} node[below=7,left=1] {$p_4$};
  \draw[thick] ($(p)+(-0.5,0.75)$) node[left] {$\widetilde{\ell}_{13}(-1)$} -- ($(psigma)+(0.5,0.75)$);
\end{tikzpicture}
\begin{tikzpicture}
  \def\a{1.5} % horizontal radius
  \def\b{1.0} % vertical radius
  \def\q{2} % distance center-external point q = |OQ|
  \def\x{{\a^2/\q}} % x coordinate P
  \def\y{{\b*sqrt(1-(\a/\q)^2}} % y coordinate P
  \coordinate (O) at (0,0); % circle center O
  \coordinate (q) at (-\q,0); % external point Q
  \coordinate (p) at (-\x,\y); % point of tangency, P
  \coordinate (qsigma) at (\q,0); % external point Q
  \coordinate (psigma) at (\x,\y); % point of tangency, P
  %\draw[dashed] (\x,0) |- (0,\y);
  \draw[thick] (O) ellipse({\a} and {\b});
  \draw (-\x,-\y) node[xshift=-0.8cm] {$\widetilde{q}_{1234}(0)$};
  \draw[thick,red] ($(p)+(-0.5,-0.2)$) -- ($(p)+(-0.5,1.5)$) node[above] {$\widetilde{E}_1(-2)$};
  \draw[thick,red] ($(psigma)+(0.5,-0.2)$) -- ($(psigma)+(0.5,1.5)$) node[above] {$\widetilde{E}_3(-2)$};
  %\fill[black] (p) circle(0.05) node[below=8,right=-3] {$p_2$};
  %\fill[black] (psigma) circle(0.05) node[below=8,left=-3] {$p_4$};
  %\draw (p) node {$\bullet$} node[below=8,right=-3] {$p_2$};
  %\draw (psigma) node {$\bullet$} node[below=8,left=-3] {$p_4$};
  \draw[thick] ($(p)+(-0.75,1)$) node[left] {$\widetilde{\ell}_{13}(-1)$} -- ($(psigma)+(0.75,1)$);
%  \draw[thick] ($(p1)!-0.8!(-2,0.2)$) -- ($(p1)!1.2!(-2,0.2)$);
  \draw[thick] ($(p)!-0.2!(-2.5,1)$) -- ($(p)!0.8!(-2.5,1)$) node[left] {${E_2(-1)}$};
  \draw[thick] ($(psigma)!-0.2!(2.5,1)$) -- ($(psigma)!0.8!(2.5,1)$) node[right] {${E_4(-1)}$};
\end{tikzpicture}

%8
\begin{tikzpicture}
  \def\a{1.5} % horizontal radius
  \def\b{1.0} % vertical radius
  \def\q{2} % distance center-external point q = |OQ|
  \def\x{{\a^2/\q}} % x coordinate P
  \def\y{{\b*sqrt(1-(\a/\q)^2}} % y coordinate P
  \coordinate (O) at (0,0); % circle center O
  \coordinate (q) at (-\q,0); % external point Q
  \coordinate (p) at (-\x,\y); % point of tangency, P
  \coordinate (qsigma) at (\q,0); % external point Q
  \coordinate (psigma) at (\x,\y); % point of tangency, P
  \draw[thick] (O) ellipse({\a} and {\b});
  \draw (-\x,-\y) node[xshift=-0.8cm] {$q_{1234}(4)$};
  \draw[<->,gray] ($(q)!0.25!(p)$) -- ($(q)!1.75!(p)$) node[above,near end] {$p_2$};
  \draw[<->,gray] ($(qsigma)!0.25!(psigma)$) -- ($(qsigma)!1.75!(psigma)$)  node[above,near end] {$p_4$};
  \draw (p) node {$\bullet$} node[below=7,right=1] {$p_1$};
  \draw (psigma) node {$\bullet$} node[below=7,left=1] {$p_3$};
  \draw[thick] (0,-1.5) node[below] {$\ell(1)$} -- (0,1.5);
\end{tikzpicture}
\begin{tikzpicture}
  \def\a{1.5} % horizontal radius
  \def\b{1.0} % vertical radius
  \def\q{2} % distance center-external point q = |OQ|
  \def\x{{\a^2/\q}} % x coordinate P
  \def\y{{\b*sqrt(1-(\a/\q)^2}} % y coordinate P
  \coordinate (O) at (0,0); % circle center O
  \coordinate (q) at (-\q,0); % external point Q
  \coordinate (p) at (-\x,\y); % point of tangency, P
  \coordinate (qsigma) at (\q,0); % external point Q
  \coordinate (psigma) at (\x,\y); % point of tangency, P
  \draw[thick] (O) ellipse({\a} and {\b});
  \draw (-\x,-\y) node[xshift=-0.8cm] {$\widetilde{q}_{1234}(2)$};
  \draw[thick,gray] ($(p)-(0,0.5)$) -- ($(p)+(0,1)$) node[above] {$E_1(-1)$};
  \draw[thick,gray] ($(psigma)-(0,0.5)$) -- ($(psigma)+(0,1)$) node[above] {$E_3(-1)$};
  \draw (p) node {$\bullet$} node[below=7,right=1] {$p_2$};
  \draw (psigma) node {$\bullet$} node[below=7,left=1] {$p_4$};
  \draw[thick] (0,-1.5) node[below] {$\ell(1)$} -- (0,1.5);
\end{tikzpicture}
\begin{tikzpicture}
  \def\a{1.5} % horizontal radius
  \def\b{1.0} % vertical radius
  \def\q{2} % distance center-external point q = |OQ|
  \def\x{{\a^2/\q}} % x coordinate P
  \def\y{{\b*sqrt(1-(\a/\q)^2}} % y coordinate P
  \coordinate (O) at (0,0); % circle center O
  \coordinate (q) at (-\q,0); % external point Q
  \coordinate (p) at (-\x,\y); % point of tangency, P
  \coordinate (qsigma) at (\q,0); % external point Q
  \coordinate (psigma) at (\x,\y); % point of tangency, P
  \draw[thick] (O) ellipse({\a} and {\b});
  \draw (-\x,-\y) node[xshift=-0.8cm] {$\widetilde{q}_{1234}(0)$};
  \draw[thick,gray] ($(p)+(-0.5,-0.2)$) -- ($(p)+(-0.5,1.5)$) node[above] {$\widetilde{E}_1(-2)$};
  \draw[thick,gray] ($(psigma)+(0.5,-0.2)$) -- ($(psigma)+(0.5,1.5)$) node[above] {$\widetilde{E}_3(-2)$};
  %\draw (p) node {$\bullet$} node[below=8,right=-3] {$p_2$};
  %\draw (psigma) node {$\bullet$} node[below=8,left=-3] {$p_4$};
  \draw[thick,gray] ($(p)!-0.2!(-2.5,1)$) -- ($(p)!1!(-2.5,1)$) node[left] {${E_2(-1)}$};
  \draw[thick,gray] ($(psigma)!-0.2!(2.5,1)$) -- ($(psigma)!1!(2.5,1)$) node[right] {${E_4(-1)}$};
  \draw[thick] (0,-1.5) node[below] {$\ell(1)$} -- (0,1.5);
\end{tikzpicture}
\end{center}
Some comments about the number of points.

{\bf Cases~$1$, $2$, \& $3$.} The unions of lines~$\ell_{12}\cup\ell_{34}$, or~$\ell_1\cup\ell_3$ (recall that~$\ell_3=\ell_1^\sigma$),
contain a unique rational point, the intersection point of the two lines. Except~$\ell$ (cases~$1$ and~$2$) or~$\ell_{13}$ (case~$3$),
all the lines in the decomposition are not defined over~$\F_q$ and di not contain any rational points. Thus to the previous single point
we have to add the $(q+1)$ rational points of the line~$\ell$ or~$\ell_{13}$.

{\bf Case~$4$.} The two (black) components have $(q+1)$ rational points but they meet at a rational point, thus their union
contains~$2q+1$ rational points.

{\bf Case~$5$.} The only component that contains rational points is~$\plongement_*(\widetilde{\ell}_{13})$.

{\bf Cases~$6$, $7$, \& $8$.} In these cases, they are two disjoint components that contain $(q+1)$ rational points.

Finally:
$$
N_q\left(-K_{X_s}\right) = 2q+2.
$$

\medbreak

Since none of the points~$p_i$ is rational, the surfaces~$X$ and~$X_s$ still have~$q^2+q+1$ points. For every~$q$,
one has~$\#X_s(\F_q) > N_q\left(-K_{X_s}\right)$ and the evaluation map is always injective. The parameters of the code are thus
given by:

\begin{prop}
Let~$p_1 \prec p_2$ and~$p_3\prec p_4$ be such that~$p_1,p_3$ and~$p_2,p_4$ are conjugate points of degree~$2$.
The anticanonical code of the weak del Pezzo surface obtained by blowing up these points has parameters~$[q^2+q+1,6,q^2-q-1]$.
\end{prop}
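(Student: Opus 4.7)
The plan is to assemble the three invariants from the preparatory work already carried out in this subsection. For the length, the key observation is that none of $p_1,p_2,p_3,p_4$ is $\F_q$-rational, since they form two Galois orbits of degree~$2$; thus the sequence of blowing-ups $\pi$ preserves the count of rational points and $\#X(\F_q) = \#\P^2(\F_q) = q^2+q+1$. The anticanonical morphism $\plongement$ then contracts the two conjugate $(-2)$-curves $\widetilde{E}_1$ and $\widetilde{E}_3$ to a pair of conjugate (non-rational) singular points of~$X_s$; as these curves satisfy $\widetilde{E}_1 \cap \widetilde{E}_3 = \emptyset$, their union carries no $\F_q$-point, so the pushforward preserves the count and $n = q^2+q+1$. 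For the dimension, since $H^0(X_s,-K_{X_s}) \simeq H^0(X,-K_X)$ has dimension $d+1=6$ for a weak del Pezzo surface of degree~$5$, the dimension of the code is at most~$6$; equality follows from injectivity of the evaluation map, which in turn reduces to the inequality $N_q(-K_{X_s}) < \#X_s(\F_q)$. From the tabular $N_q(-K_{X_s}) = 2q+2$, and $q^2+q+1 > 2q+2$ holds for every $q\geq 2$.

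For the minimum distance, Proposition~\ref{propParameters} gives the lower bound $\dmin \geq n - N_q(-K_{X_s}) = q^2-q-1$. I would establish the matching upper bound by exhibiting a codeword of weight exactly $q^2-q-1$, following case~$6$ of the tabular: pick an absolutely irreducible $\F_q$-conic $q_{13}$ through $p_1$ and $p_3$ (such a conic exists as a generic member of the pencil $|2\ell - p_1 - p_3|$) and pair it with $\ell_{13}$ to form a cubic in $|3\ell-p_1-p_3-p_2-p_4|$; its image in $X_s$ is the union $\plongement_*(\widetilde{\ell}_{13}) \cup \plongement_*(\widetilde{q}_{13})$ of two disjoint $\F_q$-curves, each carrying $q+1$ rational points. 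Their intersection on $X$ lies on the conjugate roots $\widetilde{E}_1,\widetilde{E}_3$ and pushes forward to the two conjugate non-rational singular points of~$X_s$, so the union contains exactly $2q+2$ rational points, giving a codeword of weight $n-(2q+2) = q^2-q-1$.

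The heart of the argument, and its main obstacle, is the equality $N_q(-K_{X_s}) = 2q+2$. Bounding this quantity from above requires the exhaustive case analysis summarised in the nine-line tabular: one enumerates the Galois orbits of absolutely irreducible plane curves of degree at most~$3$ meeting some of the $p_i$, combines them into elements of $|3\ell - p_1 - p_3 - p_2 - p_4|$, translates to $|-K_X|$ via virtual transforms, and finally pushes forward by $\plongement$. The delicate step is the last one: each intersection of a component with a conjugate root $\widetilde{E}_i$ becomes a singular point of $X_s$, and whether such a point is $\F_q$-rational shifts the count by $\pm 1$, which must be tracked decomposition by decomposition. Once the bound $N_q(-K_{X_s})\leq 2q+2$ is in place, the three parameters $[q^2+q+1,6,q^2-q-1]$ assemble immediately.
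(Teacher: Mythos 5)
Your proposal is correct and follows the paper's own route: the length and dimension counts, the injectivity criterion $N_q(-K_{X_s}) < \#X_s(\F_q)$, and the reduction of the minimum distance to the bound $N_q(-K_{X_s}) = 2q+2$ obtained from the case-by-case tabular are exactly the arguments of this subsection. Your explicit weight-$(q^2-q-1)$ codeword built from the configuration $\ell_{13}\cup q_{13}$ (case~6 of the tabular) merely makes explicit the attainment of the bound, which the paper leaves implicit in the equality $N_q(-K_{X_s})=2q+2$.
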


\paragraph{Computation of the global sections from~$\P^2$.~---} Let~$d\in\F_q$ be a non-square and put~$\zeta = \sqrt{d}\in\F_{q^2}$. We
choose~$p_1 = (\zeta:0:1)$ and~$p_2=(\zeta:1)$. Then~$p_3 = (-\zeta:0:1)$, the line~$\ell_{13} = (p_1p_3)$ has equation~$Y=0$,
the line~$\ell_{12} = (p_1p_2)$ corresponds to the zeros of the linear form~$L=X-\zeta(Y+Z)$ and the
line~$\ell_{34} = (p_3 p_4)$ corresponds to the zeros of the linear form~$L^\sigma=X+\zeta(Y+Z)=0$.
The linear forms~$Y,L,L^\sigma$ generate the global sections of~$\ell$ and we easily prove that
\begin{align*}
\left|3\ell - p_1 - p_3 - p_2 - p_4\right|
&=
\left\langle Y^3,Y^2L,Y^2L^\sigma,YLL^\sigma,L^2L^\sigma,L(L^\sigma)^2\right\rangle_{\overline{\F}_q}\\
&=
\left\langle Y^3,Y^2(L+L^\sigma),Y^2\zeta(L-L^\sigma),YLL^\sigma,LL^\sigma(L+L^\sigma),LL^\sigma\zeta(L-L^\sigma)\right\rangle_{\F_q}\\
&=
\left\langle Y^3,Y^2X,Y^2(Y+Z),Y\Pi,X\Pi,(Y+Z)\Pi\right\rangle_{\F_q},
\end{align*}
where~$\Pi = L L^\sigma = X^2-d(Y+Z)^2$.
The evaluation points are nothing else than all the points of~$\P^2(\F_q)$.

\subsection{Degree~$4$, singularity of type~$\mathbf{A}_1$}\label{sDeg4A1}

This example corresponds to the type number~$8$ in degree~$4$ \cite{Classification}.

\paragraph{Configuration to blow-up and down.~---}
We blow up six points on a conic, the first two~$p_1$ and~$p_2$ being conjugate (or rational), the last four~$p_3,p_4,p_5,p_6$
being conjugate. This leads to a degree~$3$ weak Del Pezzo surface~$Y\overset{\pi}{\longrightarrow}\P^2$ as in~\eqref{eqP2YXXs}.
In this surface, the strict transform of the line~$\ell_{12}$
passing through~$p_1$ and~$p_2$ is a rational $(-1)$-curve that can be contracted: the codomain of the
contraction~$Y \overset{\chi}{\longrightarrow} X$ is the degree~$4$ weak Del Pezzo surface we want to work with in this section.
\begin{align*}
&\begin{tikzpicture}[baseline=0]
  \def\a{1.5} % horizontal radius
  \def\b{1.0} % vertical radius
  \def\x{-1}\def\xx{0}\def\xxx{1}
  \coordinate (p1) at (\x,{-\b*sqrt(1-(\x/\a)^2)});
  \coordinate (p2) at (\x,{\b*sqrt(1-(\x/\a)^2)});
  \coordinate (p3) at (\xx,{\b*sqrt(1-(\xx/\a)^2)});
  \coordinate (p6) at (\xx,{-\b*sqrt(1-(\xx/\a)^2)});
  \coordinate (p4) at (\xxx,{\b*sqrt(1-(\xxx/\a)^2)});
  \coordinate (p5) at (\xxx,{-\b*sqrt(1-(\xxx/\a)^2)});
  \draw[thick] (0,0) ellipse({\a} and {\b});
  \fill (p1) circle (2pt) node[below left] {$p_1$};
  \fill (p2) circle (2pt) node[above left] {$p_2$};
  \fill (p3) circle (2pt) node[above] {$p_3$};
  \fill (p4) circle (2pt) node[above right] {$p_4$};
  \fill (p5) circle (2pt) node[below right] {$p_5$};
  \fill (p6) circle (2pt) node[below] {$p_6$};
  \draw[thick,red] ($(p1)!-0.5!(p2)$) -- ($(p2)!-0.5!(p1)$) node[above] {$\ell_{12}$};
  \draw (-2,0) node[xshift=-0.1cm] {$q_{123456}$};
\end{tikzpicture}
&
&
\begin{array}{ll}
p_2=p_1^\sigma&\\
\hline
p_4=p_3^\sigma\\
p_5=p_3^{\sigma^2}\\
p_6=p_3^{\sigma^3}
\end{array}
\end{align*}
The anticanonical model of this weak del Pezzo surface~$X$ has a unique singular point.

\paragraph{Computation of the divisor class groups.~---}
On~$\overline{Y}$, one has~$\cldivweil(\overline{Y}) = \bigoplus_{i=0}^6 \Z E_i$ and 27 exceptional classes of divisor, namely the 6 exceptional lines~$E_i$, $1\leq i\leq 6$,
the~$15$ strict transforms of the lines passing through two of the six points, $E_{ij} = E_0-E_i-E_j$, $1\leq i<j\leq 6$, and the
$6$ strict transforms of the quadrics passing through five of the six points, $Q_{i} = 2E_0- \sum_{j\not= i} E_j$, $1\leq 6$.
Due to weaknesses, among these classes, the quadric ones are not represented by irreducible curves.
Indeed~$2E_0- \sum_{j\not= i} E_j = E_i + \left(2E_0- \sum_{j= 1}^6 E_j\right)$ and the last class is nothing else
than the class of the unique effective root, the strict transform of the quadric~$q_{123456}$ passing through all the six points.

The group~$\cldivweil(\overline{X})$ can be identified with~$\Z(E_0-E_1-E_2)^\perp$ via the orthogonal projection onto this space. This
projection is given by:
$$
\begin{array}{ccrcl}
  \cldivweil(\overline{Y}) & = & \Z(E_0-E_1-E_2) &\oplus &\left(\Z (E_0-E_1) \oplus \Z (E_0-E_2) \oplus \Z E_3 \oplus \cdots \oplus \Z E_6\right)\\
  \sum_{i=0}^6 a_i E_i & = & (-a_0-a_1-a_2)(E_0-E_1-E_2) &+& \left[(a_0+a_2)(E_0-E_1) + (a_0+a_1)(E_0-E_2) +\sum_{i=3}^6 a_i E_i\right]
\end{array},
$$
and thus
\begin{align*}
\cldivweil(\overline{X})
=
\Z L_1 \oplus \Z L_2 \oplus \Z E_3 \oplus \cdots \oplus \Z E_6,
&
&\text{where}
&
&L_i = E_0-E_i,\quad i=1,2.
\end{align*}
In particular, the anticanonical divisors are related by
$$
-K_Y
=
3E_0 - \sum_{i=1}^6 E_i = -(E_0-E_1-E_2) + \underbrace{2L_1+2L_2 - \sum_{i=3}^6 E_i}_{-K_X}
$$
Only the exceptional classes of~$\overline{Y}$ that do not meet~$E_0-E_1-E_2$ are mapped to exceptional classes on~$\overline{X}$;
for~$3\leq i\leq 6$, this leaves
the classes:
\begin{align*}
&E_i \longmapsto E_i,
&
&E_{1i} \longmapsto L_1-E_j,
&
&E_{2i} \longmapsto L_2-E_j,
&
&Q_i \longmapsto L_1+L_2-\sum_{j\in\{3,\ldots,6\}\setminus\{i\}}E_j.
\end{align*}
As for the unique effective root of~$\overline{Y}$, it is mapped to the root~$L_1+L_2-E_3-E_4-E_5-E_6$ and the last four exceptional
classes are not represented by irreducible curves. Thus one has
\begin{align*}
&\overline{\RR} = \Z (L_1+L_2-E_3-\cdots-E_6),
&
&\overline{\RR}^\perp = \left\{a_1L_1+a_2L_2+\sum_{i=3}^6 E_i \mid a_1+a_2+\sum_{i=3}^6 a_i = 0\right\}.
\end{align*}
In order to take into account the Galois action, which acts via~$(L_1L_2)(E_3E_4E_5E_6)$,
we put~$\L = L_1 + L_2$ and~$\EE = \sum_{i=3}^6 E_i$. We easily verify that
\begin{align*}
&\cldivweil(X) = \cldivcartier(X) = \Z \L \oplus \Z \EE = \Z(\L-\EE)\oplus\Z\EE,
&
&\RR = \Z (\L-\EE),
&
&\RR^\perp = \Z(2\L-\EE)=\Z K_X.
\end{align*}
This leads to the following isomorphism:
$$
\begin{array}{ccc}
\cldivweil(X_s) \simeq \cldivweil(X)/\RR & \overset{\simeq}{\longrightarrow} & \Z\EE\\
a\L+b\EE \bmod{\RR} & \longmapsto & (a+b)\EE
\end{array}
$$
Via this isomorphism, the sub-module~$\cldivcartier(X_s)=\RR^\perp = \Z (2\L-\EE) = \Z K_X$ is mapped to~$\Z\EE$ itself.
In conclusion both~$\cldivcartier(X_s)$ and~$\cldivweil(X_s)$ are free $\Z$-module of rank~$1$ and the canonical embedding turns to be an isomorphism.

\paragraph{Types of decomposition into irreducible components in~$\left|-K_{X_s}\right|$.~---} Recall that
$$
-K_X
=
2L_1+2L_2 - \sum_{i=3}^6 E_i
=
4E_0 -2E_1 -2E_2 - \sum_{i=3}^6 E_i.
$$
Global sections of~$-K_X$ are thus related to quartics of~$\P^2$. More precisely, one has the following one-to-one correspondences:
$$
\begin{array}{ccccc}
\left|4\ell - 2p_1 - 2p_2 - \sum_{i=3}^6 p_i\right|_Y
& \longrightarrow
& \left|4E_0 -2E_1 -2E_2 - \sum_{i=3}^6 E_i\right|_X
& \longrightarrow
& \left|-K_{X_s}\right|_{X_s}\\
C & \longmapsto & \chi_*\left(C^\sharp\right) & \longmapsto &\plongement_*\left(\chi_*\left(C^\sharp\right)\right),
\end{array}
$$
and we need to list all the quadrics of~$\P^2$ having multiplicity at least~$2$ at~$p_1$ and~$p_2$ and passing through the~$p_i$
for~$3\leq i\leq 6$. Note that in the correspondences above, we skip the surface~$Y$. Recall that, as in~\eqref{eqP2YXXs},
we have~$\P^2 \overset{\pi}{\longleftarrow} Y \overset{\chi}{\longrightarrow} X$ and the morphism~$\chi$ here is the contraction
of the strict transform of the line passing through~$p_1$ and~$p_2$.

%In the tabular below, the letters~$\ell,q,c,t$ denote respectively a line, a quadric, a cubic and a quartic of~$\P^2$. An
%index~$i_1\ldots i_r$ under one of these letters mean that the curve pass through~$p_{i_1}, \ldots, p_{i_r}$. For example~$\ell_{12}$
%denotes the (unique) line passing through~$p_1$ and~$p_2$ and~$q_{123456}$ denotes the (unique) quadric passing
%through~$p_1,\ldots,p_6$.

The orbits of lines, respectively conics, having degree less than~$4$ and passing through some of the points~$p_i$'s are
\begin{align*}
&\ell_1\cup\ell_2,
&
&\ell_3\cup\ell_4\cup\ell_5\cup\ell_6,
&
&\ell_{12},
&
&\ell_{35}\cup\ell_{46},
&
&\ell_{13}\cup\ell_{24}\cup\ell_{15}\cup\ell_{26},
&
&\ell_{14}\cup\ell_{25}\cup\ell_{16}\cup\ell_{23},
\end{align*}
respectively:
\begin{align*}
&q_1\cup q_2,
&
&q_{12},
&
&q_{35}\cup q_{46},
&
&q_{3456},
&
&q_{1235}\cup q_{1246},
&
&q_{123456}.
\end{align*}
The only orbits of cubics or quartics having degree less than~$4$ that pass through the points~$p_i$ are~$c_{123456}$ and~$t_{123456}$. We now combine the rational irreducible decompositions in order to construct plane curves in the expected sub-linear system.

First, suppose that the decomposition into absolute irreducible components contains a line
\begin{itemize}
\item If this line joins one of the first two points
to one of the last four points, i.e. a line~$\ell_{ij}$ with~$i\in\{1,2\}$ and~$j\in\{3,4,5,6\}$, then this line has degree~$4$ and it
turns out that its orbit under the Galois action lies in the linear system;
(cases~$1$ and~$2$ in the tabular below).
\item  If this line is~$\ell_{12}$, which is rational, then this line appears with multiplicity at
most~$2$. If~$2\ell_{12}$ is a part of the decomposition then the complementary conic must be rational and pass through the last four
points: the conic must be~$\ell_{35}\cup\ell_{46}$ or~$q_{3456}$ or~$q_{123456}$ (cases~$3,4,5$). If~$\ell_{12}$ has multiplicity~$1$, then the complementary cubic passes through the six points. Since, except~$\ell_{12}$, all the lines passing through
some~$p_i$ have even degree, this cubic cannot be a union of three lines. The remaining cases are thus~$q_{123456}\cup\ell$
or~$c_{123456}$ (cases~$6$ and~$7$).
\item If the line is~$\ell_i$ for~$i\geq 3$, then it has degree (at least)~$4$ and its orbit under
Galois has degree~$4$ (or greater) without passing through~$p_1$ and~$p_2$. It does not work.
\item If the line is~$\ell_1$
then its conjugate~$\ell_1^\sigma$ passes through~$p_2$; the complement is a conic passing through the six points, and it
must be~$q_{123456}$ (case~$8$).
\item  Last if this line is~$\ell$ a line passing through none of the six points then the complementary cubic
passes through the six points with multiplicity~$2$ at~$p_1$ and~$p_2$. Since an irreducible plane cubic has at most one singular
point, this cubic must be reducible and it is the union of a line and a conic, whose meeting points are the singular points, that
is~$p_1$ and~$p_2$. Thus the line must be~$\ell_{12}$, and the conic is~$q_{123456}$ and we recover case~$6$.
\end{itemize}

Secondly, suppose that there are only two absolutely irreducible conics in the decomposition. For the union of these two
conics to be singular at~$p_1$ and~$p_2$, they must pass through~$p_1$ and~$p_2$. Taking into account the rationality,
there are only three possibilities, cases~$9,10,11$.

Last if the quartic is absolutely irreducible, then it must pass through the six points with multiplicity~$2$ at~$p_1$ and~$p_2$.

In the tabular below, we summarize all the possibilities. As noted below, the strict transform~$\widetilde{\ell}_{12}$ in~$Y$
is contracted in~$X$ via the morphism~$Y\overset{\chi}{\longrightarrow} X$; this explains why the curve disappears in the middle column.
Then from~$X$ to~$X_s$,
it is the irreducible effective root~$\widetilde{q}_{123456}$ that is contracted by the morphism~$X \overset{\varphi}{\longrightarrow} X_s$.
Thus on~$X_s$, there
are two specific rational points,~$p$ the image of the contraction of~$\widetilde{\ell}_{12}$ and~$s$
the image of the contraction of~$\widetilde{q}_{123456}$.
$$
{\renewcommand{\arraystretch}{1.5}
\begin{array}{|r|l|l|l||l|l||l|l|}
\hline
&\left|4\ell -2p_1-2p_2-\sum_{i=3}^6 p_i\right|  & \left|-K_X\right|& \left|-K_{X_s}\right| & \text{Max}   \\
&\text{on~$\P^2$}                 & \text{on~$X$}             & \text{on~$X_s$}                & \text{nb. of pts}\\
\hline\hline
1&
\ell_{13}\cup\ell_{24}\cup\ell_{15}\cup\ell_{26}
&\widetilde{\ell}_{13}\cup\widetilde{\ell}_{24}\cup\widetilde{\ell}_{15}\cup\widetilde{\ell}_{26}
&\plongement_*(\widetilde{\ell}_{13})\cup\plongement_*(\widetilde{\ell}_{24})\cup\plongement_*(\widetilde{\ell}_{15})\cup\plongement_*(\widetilde{\ell}_{26})
&0
\\
2&
\ell_{14}\cup\ell_{25}\cup\ell_{16}\cup\ell_{23}
&\widetilde{\ell}_{14}\cup\widetilde{\ell}_{25}\cup\widetilde{\ell}_{16}\cup\widetilde{\ell}_{23}
&\plongement_*(\widetilde{\ell}_{14})\cup\plongement_*(\widetilde{\ell}_{25})\cup\plongement_*(\widetilde{\ell}_{16})\cup\plongement_*(\widetilde{\ell}_{23})
&0
\\
\hline
3&
2\ell_{12}\cup\ell_{35}\cup\ell_{46}
&\widetilde{\ell}_{35}\cup\widetilde{\ell}_{46}
&\plongement_*(\widetilde{\ell}_{35})\cup\plongement_*(\widetilde{\ell}_{46})
&2
\\
4&
2\ell_{12} \cup q_{3456}
&\widetilde{q}_{3456}
&\plongement_*(\widetilde{q}_{3456})
&q+2
\\
5&
2\ell_{12}\cup q_{123456}
&\widetilde{q}_{123456}\cup E_1 \cup E_2
&\{p,s\}\cup \plongement_*(E_1) \cup \plongement_*(E_2)
&2
\\
6&
\ell_{12}\cup q_{123456}\cup \ell
&\widetilde{q}_{123456}\cup\widetilde{\ell}
&\plongement_*(\widetilde{\ell})
&q+2
\\
7&
\ell_{12} \cup c_{123456}
&\widetilde{c}_{123456}
&\plongement_*(\widetilde{c}_{123456})
&\Nqg{q}{1}
\\
\hline
8&
\ell_{1}\cup \ell_{1}^\sigma\cup q_{123456}
&\widetilde{\ell}_{1}\cup\widetilde{\ell}_{1}^\sigma\cup\widetilde{q}_{123456}
&\plongement_*(\widetilde{\ell}_{1})\cup\plongement_*(\widetilde{\ell}_{1}^\sigma)
&2
\\
\hline
9&
q_{12} \cup q_{123456}
&\widetilde{q}_{12}\cup\widetilde{q}_{123456}
&\plongement_*(\widetilde{q}_{12})
&q+2
\\
10&
q_{1235} \cup q_{1246}
&\widetilde{q}_{1235}\cup\widetilde{q}_{1246}
&\plongement_*(\widetilde{q}_{1235})\cup\plongement_*(\widetilde{q}_{1246})
&2
\\
11&
2q_{123456}
&\widetilde{q}_{123456} \cup \bigcup_{i=3}^6 E_i
&\{s\}  \cup \bigcup_{i=3}^6 \plongement_*(E_i)
&1
\\
\hline
12&
t_{123456} \text{ singular at~$p_1,p_2$}
&\widetilde{t}_{123456}
&\plongement_*(\widetilde{t}_{123456})
&\Nqg{q}{1}
\\
\hline
\end{array}}
$$
Some comments about the numbers of points are in order.

{\bf Cases~1 \& 2.} The four lines~$\ell_{13},\ell_{24},\ell_{15},\ell_{26}$ are conjugate, they do not meet and thus their union in~$\P^2$
does not contain any rational point. In the blowing-up of~$\P^2$ at the six points, the strict transforms of the
lines~$\ell_{13},\ell_{24},\ell_{15},\ell_{26}$ no longer meet the strict transform of~$\ell_{12}$. Thus the contraction of
this line does not add any rational point. Since none of the lines~$\ell_{13},\ell_{24},\ell_{15},\ell_{26}$ can be a tangent line
to~$q_{123456}$ at some~$p_i$ (otherwise the line and the quadric would have too many intersection points by Bezout), blowing-up
the~$p_i$, $1\leq i\leq 6$, separates the strict transforms of the lines and the strict transform of the quadric. The contraction of
this quadric neither add rational points. This proves that these configurations do not contain any rational point.

{\bf Case~3.} The lines~$\ell_{35}$ and~$\ell_{46}$ are conjugate to each other. Their intersection point is the unique rational point
of their union in~$\P^2$. This point is still on~$\plongement_*(\widetilde{\ell}_{35})\cup\plongement_*(\widetilde{\ell}_{46})$.
The added point is~$p$ which comes from the contraction of~$\widetilde{\ell}_{12}$. Note that the contraction of~$\widetilde{q}_{123456}$ does
not add any point since the lines~$\ell_{35}$ and~$\ell_{46}$ are separated from~$\widetilde{q}_{123456}$ in the blow-ups. This is because
none of these lines can be a tangent to~$q_{123456}$ at one of the six points (otherwise the line and the quadric would have too
many intersection points by Bezout).

{\bf Case~4.} The conics~$q_{123456}$ and~$q_{3456}$ are separated by the blowing up of the last four points and thus the point~$s$
does not belong to the final section. The strict transforms of~$\ell_{12}$ and of~$q_{3456}$ meet at two points that are mapped to~$p$
by the contraction of~$\widetilde{\ell}_{12}$. If these two points are not rational,~$p$ is an added rational point of the final section.

{\bf Case~5 \& 11.} The resulting sections contain some exceptional curves~$E_i$ in their supports since the multiplicities at some
points~$p_i$ are strictly greater than the ones expected. Since the points~$p_i$ are not rational, none of the~$E_i$ contain
rational points and the only rational points are~$\{p,s\}$ or~$\{s\}$.

{\bf Case~6.} The point~$p$ lies on~$\plongement_*(\widetilde{\ell})$ but it comes from the intersection point between~$\ell$ and~$\ell_{12}$
(which cannot be one of the~$p_i$ since it is a rational point) so no points are added in the contraction of $\ell_{12}$.
As for the point~$s$, it lies also on~$\plongement_*(\widetilde{\ell})$ and it could add one more point
if~$\ell$ meet~$q_{123456}$ at two conjugate points.

{\bf Case~7.} The cubic must be smooth at~$p_1$ and~$p_2$; indeed if it would
be singular at one of these points, by Galois conjugation, it would be singular at both of them and it would have
too many singular points. Thus, to make the multiplicity greater than~$2$ at~$p_1, p_2$, the complementary line must be~$\ell_{12}$.
This line meets the cubic at~$p_1$ and~$p_2$ and a third point which must be rational and not on~$q_{123456}$. Therefore, the contraction
of~$\widetilde{\ell}_{12}$ pass through~$p$ but does not add any rational points to~$\plongement_*(\widetilde{\ell}_{12})$.
As for the contraction of the strict transform of~$q_{123456}$, it does not add points either since blowing up the six points
separates the cubic and~$q_{123456}$.

%The cubic must be smooth, otherwise it would have two singular points and thus the complementary line must be~$\ell_{12}$.
%If this line meets the cubic at unrational points, then its contraction adds the rational point~$p$ to the initial rational points
%of the cubic. The contraction of the strict transform of~$q_{123456}$ does not add points since blowing up the six points
%separates the cubic and~$q_{123456}$.

{\bf Case~8.} The meeting point of the curves~$\ell_1$ and~$\ell_1^\sigma$ is necessarily rational and it is the unique rational point
of their union in~$\P^2$. The strict transforms~$\widetilde{\ell}_1$ and~$\widetilde{\ell}^\sigma_1$ do not meet~$\widetilde{\ell}_{12}$ and
thus the point~$p$ does belong to the final section. The contraction of the root~$\widetilde{q}_{123456}$ add the point~$s$ except
if the meeting of the curves~$\ell_1$ and~$\ell_1^\sigma$ already belongs to~$q_{123456}$.

{\bf Case~9.} The conics~$q_{12}$ and~$q_{123456}$ are separated by the blowing ups. If the conic~$q_{12}$ is chosen in such a way that the
tangent line at~$p_1$ equals~$\ell_{12}$, then~$\widetilde{q}_{12}$ and~$\widetilde{\ell}_{12}$ meet at two unrational points in~$Y$ and the
contraction of~$\widetilde{\ell}_{12}$ adds the rational point~$p$ to~$\widetilde{q}_{12}$ in~$X$. This explains why the final
number of points is~$(q+1)+1$.

{\bf Case~10.} The two conics are conjugate. Besides~$p_1$ and~$p_2$, they meet at two other points (Bezout) that can be
rational. If so these points are the
only points of~$\P^2(\F_q)$ that belong to the union of the two conics. Blowing up the six points disconnect the two conics
from the strict transforms of~$\ell_{12}$ and~$q_{123456}$. So no points are added.
%\textcolor{orange}{impossible que les coniques aient~$\ell_{12}$ comme tangente commune en~$p_1,p_2$~? Ca rajouterait un point}

{\bf Case~12.} Since the quartic has at least two singular points, it geometric genus must be at most~$1$.

As predicted by the class group computations, all the curves on~$X_s$ in the linear system are irreducible; they are not necessarily
absolutely irreducible but it turns out that curves that are not absolutely irreducible never contain too many rational points.

In any case, one has
$$
N_q\left(-K_{X_s}\right) \leq N_q(1).
$$
%If~$p_1$ and~$p_2$ are chosen to be rational over~$\F_q$ then
%Put~$L_i = E_0 - E_i$ for~$i = 1,2$ and~$\EE = \sum_{i=3}^6 E_i$, then~$L_i\cdot L_i = 0$ and~$L_1\cdot L_2 = 1$, $\EE\cdot\EE = -4$.

\medbreak

Since none of the points~$p_i$ is rational, the surface~$Y$ has~$q^2+q+1$ rational points. Since~$X$ is obtained by
contracting~$\widetilde{\ell}_{12}$, it contains~$q^2+1$ rational points. In the same way, after contracting~$\widetilde{q}_{123456}$,
the surface~$X_s$ has~$q^2-q+1$ rational points. Since~$q^2-q+1 \leq N_q(1)$ for~$q\in\{2,3\}$, the evaluation map may be non injective
and we do not consider the codes with these two values.

\begin{prop}
Suppose that~$q\not=2,3$.
Let~$p_1,\ldots,p_6\in\P^2_{\F_q}$ be six conconic points, such that~$p_1,p_2$ and~$p_3,p_4,p_5,p_6$ are conjugate.
The anticanonical code of the weak del Pezzo surface obtained
by blowing up these six points and then blowing down the strict transform of the line~$(p_1p_2)$ has
parameters~$[q^2-q+1,5,\geq q^2 -q +1 - \Nqg{q}{1}]$.
\end{prop}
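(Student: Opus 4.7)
The proof assembles the three parameters $[n,k,\dmin]$ from the case analysis already carried out above. The plan is as follows.

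For the length $n$, I would track rational points through the diagram $\P^2 \overset{\pi}{\longleftarrow} Y \overset{\chi}{\longrightarrow} X \overset{\plongement}{\longrightarrow} X_s$. Since none of the six points $p_i$ is rational (they split into a pair and a quadruple of conjugate points), Proposition~\ref{propCodeBlowingup}\eqref{itemCodesBlowingupNR} applied six times gives $\#Y(\F_q)=\#\P^2(\F_q)=q^2+q+1$. Contracting the rational $(-1)$-curve $\widetilde{\ell}_{12}\simeq\P^1_{\F_q}$ via $\chi$ replaces $q+1$ rational points by a single one, so $\#X(\F_q)=q^2+1$. Finally, the unique effective root of $X$ is the strict transform $\widetilde{q}_{123456}$ of the smooth conic through the six points; it is isomorphic to $\P^1_{\F_q}$ and is contracted by $\plongement$ to the singular point, hence $n=\#X_s(\F_q)=q^2+1-q=q^2-q+1$.

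For the dimension, I would invoke the standard fact that for a weak del Pezzo surface of degree $d$ one has $\dim H^0(X,-K_X)=d+1$, so here the space of sections has dimension $5$. It remains to check injectivity of the evaluation map, which by Proposition~\ref{propParameters} amounts to $\#X_s(\F_q)>N_q(-K_{X_s})$. Using the upper bound $N_q(-K_{X_s})\leq N_q(1)$ obtained from the case analysis (see next step) together with the Weil--Serre bound $N_q(1)\leq q+1+\lfloor 2\sqrt{q}\rfloor$, one verifies that $q^2-q+1>N_q(1)$ as soon as $q\geq 4$, which is exactly the reason to exclude $q=2,3$.

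For the minimum distance, by Proposition~\ref{propParameters} it suffices to show that $N_q(-K_{X_s})\leq N_q(1)$. This is the content of the twelve-line tabular immediately preceding the statement: every curve in $\left|-K_{X_s}\right|$ is obtained from a quartic in $\left|4\ell-2p_1-2p_2-\sum_{i=3}^6 p_i\right|$ via the virtual transform, then $\chi_*$, then $\plongement_*$, and its decomposition into absolutely irreducible components falls into one of the twelve listed types. In each of the eleven reducible or non absolutely irreducible types (cases $1$--$11$), the number of rational points of the resulting curve on $X_s$ is bounded by an explicit quantity which is at most $q+2$, hence a fortiori by $N_q(1)$ (using again the Weil--Serre bound for $q\geq 4$). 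In the remaining case $12$, the curve is the image of an absolutely irreducible quartic singular at $p_1$ and $p_2$; its geometric genus is at most one, so by Weil--Serre it carries at most $N_q(1)$ rational points.

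The only step that requires genuine care is the bookkeeping in case~$12$: one must argue that each of the two singularities $p_1,p_2$ of such a quartic drops the geometric genus by at least one (so that the normalization has genus $\leq 1$), and that the contractions performed by $\chi$ and $\plongement$ do not spuriously create additional rational points on $\plongement_*\chi_*\widetilde{t}_{123456}$; this is what ultimately forces the bound $N_q(-K_{X_s})\leq N_q(1)$ rather than a strictly smaller one, and explains why the minimum distance can only be asserted as the inequality $\dmin\geq q^2-q+1-N_q(1)$.
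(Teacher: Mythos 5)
Your proposal is correct and follows essentially the same route as the paper: the length is obtained by tracking rational points through $\pi$, $\chi$ and $\plongement$ ($q^2+q+1 \to q^2+1 \to q^2-q+1$), the dimension is $d+1=5$ with injectivity failing only when $q^2-q+1\leq N_q(1)$ (i.e. $q\in\{2,3\}$), and the minimum distance comes from the twelve-case decomposition tabular giving $N_q\left(-K_{X_s}\right)\leq N_q(1)$. One harmless slip: case $7$ ($\ell_{12}\cup c_{123456}$, whose image on $X_s$ is a possibly smooth cubic) is bounded by $N_q(1)$ rather than by $q+2$, but the inequality actually needed still holds.
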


\paragraph{Computation of the global sections from~$\P^2$.~---} Let~$Q$ be a quadratic polynomial that defines~$q_{123456}$ and~$L_{ij}$
a linear form that defines the line~$\ell_{ij}$. Then
$$
\left|
4\ell - 2p_1 - 2p_2 - \sum_{i=3}^6 p_i
\right|
=
%\langle QL_{12}X,QL_{12}Y,QL_{12}Z,L_{13}L_{24}L_{15}L_{26},L_{14}L_{25}L_{16}L_{23}\rangle_{\F_q}
\langle QL_{12}X,QL_{12}Y,QL_{12}Z,L_{12}^2L_{35}L_{46},L_{13}L_{24}L_{15}L_{26}\rangle_{\F_q}
$$
The three first sections are clearly linearly independent. The fourth one cannot be a linear combination
of the three first ones since otherwise~$Q$ would be reducible. Last, the fifth one cannot be a linear combination of the four first ones
since otherwise~$L_{12}$ would divide~$L_{13}L_{24}L_{15}L_{26}$.

\subsection{Degree~$4$, singularity of type~$4\mathbf{A}_1$} \label{sDeg44A1}

This example corresponds to the type number~$48$ in degree~$4$ \cite{Classification}.
We recover an example already studied by Koshelev \cite[\S1.2]{Koshelev}. Our point of view slightly
differs from Koshelev's one, so even if this example appears in the literature, we choose to go into details.

\paragraph{Configuration to blow-up and down.~---} The context is still the one described in~\eqref{eqP2YXXs} with a non
trivial map~$Y \overset{\chi}{\longrightarrow} X$.

Let~$p_1,p_2=p_1^{\sigma},p_3=p_1^{\sigma^2},p_4=p_1^{\sigma^3}\in\P^2$ be four conjugate points in general position (no three of them are
collinear) and, as usual, let~$\ell_{12},\ell_{23},\ell_{34},\ell_{14}$ denote the lines~$(p_1p_2),(p_2p_3),(p_3p_4),(p_1p_4)$; they are conjugate.
Let~$p_5$ be the intersection point of~$\ell_{12}$ and~$\ell_{34}$ and~$p_6$ be the intersection point of~$\ell_{23}$ and~$\ell_{14}$.
They are also conjugate and we denote by~$\ell_{56}$ the rational line passing through~$p_5,p_6$.
\begin{align*}
&\begin{tikzpicture}[baseline=0]
\def\l{2.5};
\def\h{0.7};
\coordinate (p5) at (0,0);\coordinate (a5) at (\l,\h);\coordinate (b5) at (\l,-\h);
\coordinate (p6) at (1.5,1.5);\coordinate (a6) at (1.5+\h,1.5-\l);\coordinate (b6) at (1.5-\h,1.5-\l);
\draw[thick] ($(p5)!-0.2!(a5)$)--(a5);\draw[thick] ($(p5)!-0.2!(b5)$)--(b5);
\draw[thick] ($(p6)!-0.2!(a6)$)--(a6);\draw[thick] ($(p6)!-0.2!(b6)$)--(b6);
\fill (intersection cs:first line={(p5)--(a5)},second line={(p6)--(b6)}) circle (2pt) node[above left] {$p_1$};
\fill (intersection cs:first line={(p5)--(a5)},second line={(p6)--(a6)}) circle (2pt) node[above right] {$p_2$};
\fill (intersection cs:first line={(p5)--(b5)},second line={(p6)--(a6)}) circle (2pt) node[below right] {$p_3$};
\fill (intersection cs:first line={(p5)--(b5)},second line={(p6)--(b6)}) circle (2pt) node[below left] {$p_4$};
%\draw[thick,red] (p5) .. controls +(90:0.5cm) and +(180:0.5cm) .. (p6);
%\draw[thick,red] (p5) .. controls +(90:0) and +(90:0) .. ($(p5)+(0,-0.5)$);
%\draw[thick,red] (p6) .. controls +(180:0) and +(180:0) .. ($(p6)+(0.5,0)$);
\draw (p5) node {$\bullet$} node[above left] {$p_5$};
\draw (p6) node {$\bullet$} node[right] {$p_6$};
\draw[thick,red] ($(p5)!-0.2!(p6)$) -- ($(p6)!-0.2!(p5)$);
\end{tikzpicture}
&
&\begin{array}{l}
p_2=p_1^\sigma\\
p_3=p_1^{\sigma^2}\\
p_4=p_1^{\sigma^3}\\
p_6=p_5^\sigma
\end{array}
\end{align*}
We blow up these six points to obtain a degree~$3$ weak del Pezzo surface~$Y$.
The strict transform of the line~$\ell_{56}$, of class~$E_0-E_5-E_6$, is an exceptional curve that can be contracted to obtain
the degree four weak del Pezzo surface~$X$ we consider here. The anticanonical model of this surface has four
singular points of type~$\mathbf{A}_1$ (since the four irreducible effective roots do not intersect, see below).

\paragraph{Computation of the divisor class groups.~---} Over~$\overline{\F}_q$, we know
that~$\cldivweil(\overline{Y}) = \bigoplus_{i=0}^6 \Z E_i$ and that~$-K_Y=3E_0 - \sum_{i=1}^6 E_i$. Moreover
the surface~$Y$ has four irreducible effective roots, the strict transforms
of the lines~$\ell_{125},\ell_{236},\ell_{345},\ell_{146}$ whose conjugate classes in~$\cldivweil(\overline{Y})$ are:
\begin{align*}
&E_0-E_1-E_2-E_5,
&
&E_0-E_2-E_3-E_6,
&
&E_0-E_3-E_4-E_5,
&
&E_0-E_1-E_4-E_6.
\end{align*}
The group~$\cldivweil(\overline{X})$ identifies
with~$\Z(E_0-E_5-E_6)^\perp$ inside~$\cldivweil(\overline{Y})$. Since
\begin{align}\label{eqDeg4Type48OrthoDecomp}
\begin{array}{rcrcl}
\cldivweil(\overline{Y}) & = &\Z(E_0-E_5-E_6) & \overset{\perp}{\oplus} & \left[\Z(E_0-E_5)\oplus\Z(E_0-E_6) \oplus \bigoplus_{i=1}^4\Z E_i \right]\\
\sum_{i=0}^6 a_i E_i & = &(-a_0-a_5-a_6)(E_0-E_5-E_6) & + & (a_0+a_6)(E_0-E_5)+(a_0+a_5)(E_0-E_6)+\sum_{i=1}^4 a_i E_i
\end{array}
\end{align}
one has
$$
\cldivweil(\overline{X})
=
\Z(E_0-E_5)\oplus\Z(E_0-E_6) \oplus \Z E_1 \oplus \Z E_2 \oplus \Z E_3 \oplus \Z E_4.
$$
In particular,
$$
-K_X
=
2(E_0-E_5)+2(E_0-E_6)-E_1-E_2-E_3-E_4
=
4E_0-2E_5-2E_6-E_1-E_2-E_3-E_4.
$$
On~$\overline{X}$, there are still four effective
roots, the image by the contraction of the effective roots on~$\overline{Y}$:
\begin{align*}
\overline{\RR}
=
\Z (E_0-E_1-E_2-E_5)
\oplus
\Z (E_0-E_2-E_3-E_6)
\oplus
\Z (E_0-E_3-E_4-E_5)
\oplus
\Z (E_0-E_1-E_4-E_6).
\end{align*}
On~$\overline{X}$ there are $16$ exceptional classes,
\begin{align*}
&(E_0-E_i)-E_j, \; i\in\{5,6\},\, j\in\{1,2,3,4\},
&
&(E_0-E_5)+(E_0-E_6)-E_{i_1}-E_{i_2}-E_{i_3}, \; \{i_1,i_2,i_3\} \subset \{1,2,3,4\},
%&
%&E_i,\; i\in\{1,2,3,4\}
\end{align*}
and~$E_1,E_2,E_3,E_4$. Only these last four classes are represented by irreducible exceptional curves. We recover the graph number~$9$
of the Proposition~6.1 of Coray \& Tsfasman \cite{CorayTsfasman}.

The Galois group acts as as a $4$-cycle on the roots and as~$\left(E_1E_2E_3E_4\right)$ on the $(-1)$-curves.
Let us put~$\Fcal = (E_0-E_5)+(E_0-E_6)$ and~$\EE = E_1+E_2+E_3+E_4$, in such a way that~$\Fcal^{\cdot 2} = 2$,
$\EE^{\cdot 2} = -4$ and~$\Fcal\cdot\EE = 0$. Then one has:
\begin{align*}
&\begin{cases}\RR = \Z 2(\Fcal-\EE)\\\cldivweil(X) = \Z (\Fcal-\EE) \oplus \Z\EE\end{cases}
&
&\Longrightarrow
&
&\begin{array}{ccrcl}
\cldivweil(X_s) = \cldivweil(X)/\RR &\overset{\simeq}{\longrightarrow} & \Z/2\Z (\Fcal-\EE) &\oplus &\Z\EE\\
a\Fcal+b\EE & \longmapsto & a(\Fcal-\EE) \bmod{\RR} &+& (a+b)\EE
\end{array}
\end{align*}
As for the Cartier class group, we find~$\cldivcartier(X_s) = \RR^\perp = \Z (2\Fcal-\EE) = \Z(-K_X)$ which embeds in~$\cldivweil(X_s)$
via~$-K_X \mapsto \EE$. Thus, via the canonical embedding,~$\cldivcartier(X_s)$ and the free part of~$\cldivweil(X_s)$ are isomorphic.

\paragraph{Types of decomposition into irreducible components in~$\left|-K_{X_s}\right|$.~---} The situation looks like the preceding
one except that the multiplicity is at points~$p_5,p_6$ here. One has:
$$
\begin{array}{ccccc}
\left|4\ell - \sum_{i=1}^4 p_i - 2p_5 - 2p_6 \right|_Y
& \longrightarrow
& \left|4E_0 -2E_5 -2E_6 - \sum_{i=1}^4 E_i\right|_X
& \longrightarrow
& \left|-K_{X_s}\right|_{X_s}\\
C & \longmapsto & \chi_*\left(C^\sharp\right) & \longmapsto &\plongement_*\left(\chi_*\left(C^\sharp\right)\right).
\end{array}
$$
Thus we are reduced to list all the types of irreducible decompositions of quadrics passing through the six points, the last two
with multiplicities at least~$2$.

The orbits of lines of degree less than~$4$ that involve the six points are
\begin{align*}
&\ell_5\cup\ell_6,
&
&\ell_1\cup\ell_2\cup\ell_3\cup\ell_4,
&
&\ell_{56},
&
&\ell_{13}\cup\ell_{24},
&
&\text{and}
&
&\ell_{125}\cup\ell_{236}\cup\ell_{345}\cup\ell_{146}.
\end{align*}
There are only two ways (cases~$1$ and~$2$ below) to combine these configurations in order to obtain a curve in the expected linear system.

The orbits of conics of degree less than~$4$ that involve the six points are~$q_{1234}, q_{56}$ and~$q_{1356}\cup q_{2456}$.
There are only two ways (cases~$3$ and~$4$ below) to combine the configurations of lines and conics in order to obtain a curve in the
expected linear system.

If the decomposition contains a cubic, it must be smooth at $p_5$ and $p_6$ and the complement must be~$\ell_{56}$; this is case~$5$. 
This leads to the list below. Let us note that on~$X$ the curve~$\widetilde{\ell}_{56}$ in~$Y$ is contracted
by~$\chi$. On~$X_s$, this contraction is mapped to a smooth rational point~$p$.
This surface contains also four singular points~$s_i$, $1\leq i\leq 4$, coming from the
contraction of the four roots; they are conjugate and of degree~$4$.
$$
{\renewcommand{\arraystretch}{1.5}
\begin{array}{|r|l|l|l||l|l||l|l|}
\hline
&\left|4\ell - \sum_{i=1}^4 p_i -2p_5-2p_6\right|  &\left|-K_X\right| & \left|-K_{X_s}\right| & \text{Max}   \\
&\text{on~$\P^2$}                 & \text{on~$X$}             & \text{on~$X_s$}  & \text{nb. of pts}\\
\hline\hline
1&
2\ell_{56}\cup\ell_{13}\cup\ell_{24}
&\widetilde{\ell}_{13}\cup\widetilde{\ell}_{24}
&\plongement_*(\widetilde{\ell}_{13})\cup\plongement_*(\widetilde{\ell}_{24})
&2
\\
\hline
2&
\ell_{125}\cup\ell_{236}\cup\ell_{345}\cup\ell_{146}
&\widetilde{\ell}_{125}\cup\widetilde{\ell}_{236}\cup\widetilde{\ell}_{345}\cup\widetilde{\ell}_{146}\cup\bigcup_{i=1}^4 E_i
&\{s_i\}\cup\bigcup_{i=1}^4 \plongement_*(E_i)
&0
\\
\hline
3&
2\ell_{56}\cup q_{1234}
&\widetilde{q}_{1234}
&\plongement_*(\widetilde{q}_{1234})
&q+2
\\
\hline
4&
q_{1356}\cup q_{2456}
&\widetilde{q}_{1356}\cup\widetilde{q}_{2456}
&\plongement_*(\widetilde{q}_{1356})\cup\plongement_*(\widetilde{q}_{2456})
& 2
\\
\hline
5&
c_{123456} \cup \ell_{56}
&\widetilde{c}_{123456}
&\plongement_*(\widetilde{c}_{123456})
&\Nqg{q}{1}%q+2+\lfloor 2\sqrt{q}\rfloor
\\
\hline
6&
t_{123456} \text{ singular at~$p_5,p_6$}
&\widetilde{t}_{123456}
&\plongement_*(\widetilde{t}_{123456})
&\Nqg{q}{1}
\\
\hline
\end{array}}
$$
Some comments on the number of rational points.

{\bf Case~1.} The lines~$\ell_{13}$ and~$\ell_{24}$ are conjugate, their meeting point is the unique rational point of their union.
After the contraction of~$\widetilde{\ell}_{56}$, these two lines have one more rational point in common, the point~$p$.

{\bf Case~3.} If~$q_{1234}$ meets~$\ell_{56}$ at two conjugate points, the contraction of~$\widetilde{\ell}_{56}$ adds the point~$p$
to the other rational points.

{\bf Case~4.} Blowing up~$p_5$ and~$p_6$ separates the strict transforms~$\widetilde{q}_{1356}$ and~$\widetilde{q}_{2456}$ from~$\ell_{56}$.
So the contraction of this curve do not add any point. On~$\P^2$ the union of conics~$\widetilde{q}_{1356}$ and~$\widetilde{q}_{2456}$
has at most two rational points: their meeting points that differ from~$p_5,p_6$ if they are rational.

{\bf Case~5.} Necessarily the cubic is smooth at~$p_5,p_6$ and the tangent lines at these points cannot be equal to~$\ell_{56}$. Therefore
blowing up~$p_5,p_6$ separates the curves~$\widetilde{\ell}_{56}$ and~$\widetilde{c}_{123456}$ above~$p_5$ and~$p_6$.
Besides~$p_5,p_6$ the curves~$\ell_{56}$ and~$c_{123456}$ meet at a third point (Bezout) which is necessarily rational.
Via the contraction of~$\widetilde{\ell}_{56}$, this line concentrates at this third point and no points are added on~$\widetilde{c}_{123456}$.

{\bf Case~6.} Necessarily blowing up~$p_5$ and~$p_6$ disconnects the strict transforms~$\widetilde{t}_{123456}$ and~$\widetilde{\ell}_{56}$.
Moreover, it desingularizes the quartic~$t$ since the singularities at~$p_5$ and~$p_6$ must be ordinary. Blowing up~$p_1,\ldots,p_4$
also disconnects~$\widetilde{t}$ from all the effective roots. Finally~$\plongement_*({t}_{123456})$ turns to be an elliptic curve.

Finally~$N_q(-K_{X_s}) \leq N_q(1)$.

\medbreak

Since none of the points~$p_i$ is rational, $\#X(\F_q) = \#\P^2(\F_q) = q^2+q+1$. Then contracting the rational
line~$\widetilde{\ell}_{56}$ decreases the number by~$q$ and~$\#X_s(\F_q) = q^2+1$. Except for~$q=2$, this number
is stricly greater that~$N_q(1)$ and the evaluation map is injective.

\begin{prop}
Suppose~$q\not=2$.
Let~$p_1,p_2=p_1^{\sigma},p_3=p_1^{\sigma^2},p_4=p_1^{\sigma^3}\in\P^2_{\F_q}$ be four conjugate points in general position (no three of them are
collinear) and let~$p_5$ (resp.~$p_6$) be the point of intersection of the lines~$(p_1p_2)$ and~$(p_3p_4)$ (resp.~$(p_2p_3)$ and~$(p_1p_4)$).
The anticanonical code of the weak del Pezzo surface obtained
by blowing up these six points and then blowing down the strict transform of the line~$(p_5p_6)$ has
parameters~$[q^2+1,5,\geq q^2 + 1 -\Nqg{q}{1}]$.
\end{prop}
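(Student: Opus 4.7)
The plan is to apply Proposition~\ref{propParameters} to the evaluation code $\Ccal_{X_s}(-K_{X_s}, X_s(\F_q))$, after verifying its three ingredients: length, injectivity (giving dimension), and the upper bound on $N_q(-K_{X_s})$ (giving the lower bound on $\dmin$). All the hard preparatory work, namely the analysis of the divisor class groups and, above all, the exhaustive listing of decomposition types in $|-K_{X_s}|$, has already been carried out in the section; the proof will essentially be a collation of these computations.

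First I would compute the length $n = \#X_s(\F_q)$. Starting from $\#\P^2(\F_q) = q^2+q+1$, since the six points $p_1,\ldots,p_6$ split into two Galois orbits (of sizes $4$ and $2$) and none is rational, Proposition~\ref{propCodeBlowingup}~\eqref{itemCodesBlowingupNR} gives $\#Y(\F_q) = q^2+q+1$. The contraction $\chi$ collapses the rational $(-1)$-curve $\widetilde{\ell}_{56}$ (which carries $q+1$ rational points) to a single rational point $p\in X$, so $\#X(\F_q) = q^2+1$. The map $\plongement$ then contracts the four effective roots of $X$; these are conjugate of degree $4$, hence contain no rational points, and their images are the conjugate singular points $s_1,\ldots,s_4$ of degree $4$, which contribute nothing rational either. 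Therefore $\#X_s(\F_q) = q^2+1$.

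Next, to get both the dimension and the minimum distance I would invoke the case-by-case tabular established above. A direct inspection of the eight rows shows that
\[
N_q(-K_{X_s}) \leq \max\!\left(q+2,\; N_q(1)\right),
\]
since the reducible configurations (rows $1$--$4$) contribute at most $q+2$ points, while rows $5$ and $6$ produce (images of) curves of arithmetic genus $1$, bounded by $N_q(1)$. For $q \neq 2$ one checks that $q^2 + 1 > N_q(1) \geq q+2$ using the Weil--Serre bound $N_q(1) \leq q+1+\lfloor 2\sqrt{q}\rfloor$; in particular $\#X_s(\F_q) > N_q(-K_{X_s})$, so the evaluation map is injective. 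The dimension is then $h^0(X_s,-K_{X_s}) = \deg(X)+1 = 5$, and Proposition~\ref{propParameters} yields
\[
\dmin \;\geq\; \#X_s(\F_q) - N_q(-K_{X_s}) \;\geq\; q^2 + 1 - N_q(1).
\]

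The only mildly delicate point in writing this up is justifying rigorously that rows $5$ and $6$ really produce smooth (or at worst irreducible) curves of arithmetic genus $1$ on $X_s$, so that the bound $N_q(1)$ applies; this is exactly what the case-by-case commentary in the table establishes (separation of $\widetilde{c}_{123456}$ from $\widetilde{\ell}_{56}$ and from the effective roots after blowing up, and likewise the desingularization of $t_{123456}$ at its ordinary nodes at $p_5,p_6$). Once this is in hand, the proof is immediate and the exclusion $q \neq 2$ appears only to guarantee injectivity of the evaluation map.
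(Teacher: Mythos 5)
Your proposal is correct and follows essentially the same route as the paper: compute $\#X_s(\F_q)=q^2+1$ from the blow-up/contraction process, read off $N_q(-K_{X_s})\leq N_q(1)$ from the case-by-case table of decompositions in $\left|4\ell-\sum_{i=1}^4 p_i-2p_5-2p_6\right|$, and check $q^2+1>N_q(1)$ for $q\neq 2$ to get injectivity of the evaluation map, whence the parameters via Proposition~\ref{propParameters}. The one point you flag as delicate (that the absolutely irreducible cubics and quartics of rows $5$--$6$ yield irreducible arithmetic-genus-one curves on $X_s$ with at most $N_q(1)$ points) is exactly what the paper's commentary on those cases establishes, so nothing is missing.
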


As proved by Koshelev \cite[\S1.2]{Koshelev}, for some values of~$q$, the minimum distance can be improved by one. Since the argument is very nice, we choose to briefly sketch it below. The idea is to prove that all the elliptic curves
in our linear system must have a rational $2$-torsion point and thus an even number of points. Since for some~$q$, the
maximum~$N_q(1)$ is odd, this means that~$N_q\left(-K_{X_s}\right) < N_q(1)$ and our bound for the minimum distance can be improved by~$1$.
Let~$c$ be
a cubic passing through the six points. Then, for any choice of the origin, the alignments of points permit to show that:
\begin{align*}
&
\begin{array}{rl}
&p_1+p_2+p_5\\=&p_1+p_4+p_6\\=&p_2+p_3+p_6\\=&p_3+p_4+p_5
\end{array}
&
&\Longrightarrow
&
\begin{array}{rl}
&p_1-p_3\\=&p_2-p_4\\=&p_4-p_2\\=&p_6-p_5
\end{array}
&
&\Longrightarrow
&
&2(p_2-p_4) = 0,
\end{align*}
and these points must be rational since~$p_2-p_4 = p_6-p_5$ with~$p_5,p_6$ conjugate. The case of the quartics in the linear system
works in the same way but it is a little bit more technical since we need to know the group law on this kind of curve. 

\paragraph{Computation of the global sections from~$\P^2$.~---} In this example, we do not find a nice explicit basis for the global sections. Instead, we choose to present a {\tt magma} code that permits to construct the generator matrix.

\lstset{frame=lines,linewidth=15cm,basicstyle=\tiny}
\lstset{numbers=left, numberstyle=\tiny, stepnumber=5, numbersep=5pt}
%\lstset{frame=lines,basicstyle=\small,linewidth=16cm,language=Maple,keywordstyle=\bfseries}
%\lstinputlisting[firstline=1,lastline=42]{maple/Shanks.maple}
%\lstinputlisting[title = {\large\bf Deg3Type76.magma}]{Deg3Type76.magma}
\begin{center}
\lstinputlisting{Deg4Sing4A1.magma}  
\end{center}

\subsection{Degree~$4$, singularity of type~$\mathbf{A}_2$} \label{sDeg4A2}

This example corresponds to the type number~$30$ in degree~$4$ \cite{Classification}.
This type works almost as the one described in section~\ref{sDeg44A1}.

\paragraph{Configuration to blow-up and down.~---}
Let~$p_1,p_2=p_1^{\sigma},p_3=p_1^{\sigma^2},p_4=p_1^{\sigma^3}\in\P^2$ be four conjugate points in general position (no three of them are
collinear). The two lines~$(p_1p_3)$ and~$(p_2p_4)$ are conjugate. We choose a degree~$2$ point~$p_5$ on~$(p_1p_3)$ and we
let~$p_6 = p_5^\sigma$ which lies on~$(p_2p_4)$.
\begin{align*}
&\begin{tikzpicture}[baseline=0]
\def\x{0.5};
\def\h{1}
\coordinate (p1) at (-\x,\x);
\coordinate (p2) at (\x,\x);
\coordinate (p3) at (\x,-\x);
\coordinate (p4) at (-\x,-\x);
\coordinate (p6) at (\x+\h,\x+\h);
\coordinate (p5) at (\x+\h,-\x-\h);
\draw (p1) node {$\bullet$} node[left] {$p_1$};
\draw (p2) node {$\bullet$} node[right] {$p_2$};
\draw (p3) node {$\bullet$} node[right] {$p_3$};
\draw (p4) node {$\bullet$} node[left] {$p_4$};
\draw (p5) node {$\bullet$} node[right] {$p_5$};
\draw (p6) node {$\bullet$} node[right] {$p_6$};
\draw[thick] ($(p1)!-0.2!(p5)$) node[above left] {$\ell_{135}$} -- ($(p5)!-0.2!(p1)$);
\draw[thick] ($(p4)!-0.2!(p6)$) node[below left] {$\ell_{246}$} -- ($(p6)!-0.2!(p4)$);
\draw[thick,red] ($(p5)!-0.2!(p6)$) -- ($(p6)!-0.2!(p5)$) node[midway,right] {$\ell_{56}$};
\end{tikzpicture}
&
&\begin{array}{l}
p_2=p_1^\sigma\\
p_3=p_1^{\sigma^2}\\
p_4=p_1^{\sigma^3}\\
p_6=p_5^\sigma
\end{array}
\end{align*}
The surfaces of the diagram~\eqref{eqP2YXXs} are the following:
we blow up the six points to obtain the degree~$3$ weak del Pezzo surface~$Y$. On this surface, the strict transform of the
line~$\ell_{56}$, of class~$E_0-E_5-E_6$, is an exceptional curve that can be contracted to obtain
the weak degree four weak del Pezzo surface~$X$ defined over $\F_q$. The anticanonical model has a unique singular point of type~$\mathbf{A}_2$
(since there are only two irreducible effective root that meet, see below).

\paragraph{Computation of the divisor class groups.~---}  Over~$\overline{\F}_q$, we know
that~$\cldivweil(\overline{Y}) = \bigoplus_{i=0}^6 \Z E_i$ and that~$-K_Y=3E_0 - \sum_{i=1}^6 E_i$. There are only two
irreducible effective roots on~$\overline{Y}$, the strict transforms
of the lines~$\ell_{135}$ and~$\ell_{246}$ whose conjugate classes in~$\cldivweil(\overline{Y})$
are~$E_0-E_1-E_3-E_5$ and~$E_0-E_2-E_4-E_6$.

The group~$\cldivweil(\overline{X})$ identifies with~$\Z(E_0-E_5-E_6)^\perp$ inside~$\cldivweil(\overline{Y})$. We recover the same orthogonal
decomposition as in~$\eqref{eqDeg4Type48OrthoDecomp}$. In particular, we still
have~$-K_X=4E_0 - \sum_{i=1}^4 E_i - 2E_5 - 2E_6$. 
On~$\overline{X}$ there are still two (conjugate) irreducible effective roots, of classes~$E_0-E_1-E_3-E_5$ and~$E_0-E_2-E_4-E_6$.

We follow the same computation as in section~\ref{sDeg44A1} and we still put~$\EE = (E_0-E_5)+(E_0-E_6)$ and~$\Fcal=E_1+E_2+E_3+E_4$.
Then, for~$X$ we have:
\begin{align*}
&-K_X = 2\Fcal - \EE,
&
&\RR = \Z(\Fcal-\EE),
&
&\text{and}
&
&\cldivweil(X) = \Z(\Fcal-\EE)\oplus\Z\EE,
\end{align*}
and for~$X_s$ we deduce that:
\begin{align*}
&\cldivcartier(X_s) = \Z (2\Fcal-\EE) = \Z(-K_X)
&
&\begin{array}{ccc}
\cldivweil(X_s) &\overset{\simeq}{\longrightarrow} & \Z\EE\\
a\Fcal+b\EE & \longmapsto &  (a+b)\EE
\end{array}
&
&
  \begin{array}{ccc}
    \cldivcartier(X_s) & \overset{\simeq}{\rightarrow} & \cldivweil(X_s)\\
     -K_X    & \mapsto & \EE
  \end{array}
\end{align*}
The canonical embedding induces an isomorphism between the two class groups.
%As for the picard group, we find~$\Pic(X_s) = \left(\RR^\Gamma\right)^\perp = \Z (2\Fcal-\EE) = \Z(-K_X)$ which embeds in~$\Cl(X_s)$
%via~$-K_X \mapsto \EE$. Thus~$\Pic(X_s)$ and the free part of~$\Cl(X_s)$ are isomorphic.

\paragraph{Types of decomposition into irreducible components in~$\left|-K_X\right|$.~---} Since the two class groups are isomorphic
and free of rank one, all the sections are necessarily irreducible. However, they can be absolutely reducible and we need to review
all the possibilities.

As in section~\ref{sDeg44A1}, we are reduced to list all the types of irreducible decompositions of quartics passing through the six
points, the last two with multiplicities at least~$2$. We follow the same line.

The orbits of lines of degree less than~$4$ that involve the six points are
\begin{align*}
&\ell_5\cup\ell_6,
&
&\ell_1\cup\ell_2\cup\ell_3\cup\ell_4,
&
&\ell_{56},
&
&\ell_{12}\cup\ell_{23}\cup\ell_{34}\cup\ell_{14},
&
&\ell_{16}\cup\ell_{25}\cup\ell_{36}\cup\ell_{45},
&
&\text{and}
&
&\ell_{135}\cup\ell_{246}.
\end{align*}
There are five ways (cases~$1$ to~$5$ below) to combine these configurations in order to obtain a curve in the expected linear system.

The orbits of conics of degree less than~$4$ that involve the six points are~$q_{1234}$ and~$q_{56}$. Note that
compared to the example of section~\ref{sDeg44A1}, the orbit~$q_{1356}\cup q_{2456}$ does not appear since a conic~$q_{1356}$
cannot be irreducible otherwise it would have three intersection points with the line~$\ell_{135}$.
There are only two ways (cases~$6$ and~$7$ below) to combine the configurations of lines and conics in order to obtain a curve in the
expected linear system.

If the decomposition contains a cubic, it must be smooth at $p_5$ and $p_6$ and the complement must be~$\ell_{56}$; this is case~$8$. 

This leads to the list below. In~$Y$, the strict transforms~$\widetilde{\ell}_{135}$ and~$\widetilde{\ell}_{246}$ are separated from the
strict transform~$\widetilde{\ell}_{56}$. In~$X$ this last curve is contracted to a smooth rational point~$p$.
Last, another difference from the example of section~\ref{sDeg44A1}:
the two effective roots of~$X$ meet and they are thus contracted by the anticanonical morphism~$\plongement_*$
onto the same point~$s$. This point is the unique singular point of~$X_s$ and it is necessarily a rational point.
$$
{\renewcommand{\arraystretch}{1.5}
\begin{array}{|r|l|l|l||l|l||l|l|}
\hline
&\left|4\ell - \sum_{i=1}^4 p_i -2p_5-2p_6\right|  &\left|-K_X\right| & \left|-K_{X_s}\right| & \text{Max}   \\
&\text{on~$\P^2$}                 & \text{on~$X$}             & \text{on~$X_s$}  & \text{nb. of pts}\\
\hline\hline
1&
2\ell_{56}\cup\ell_{135}\cup\ell_{246}
&\widetilde{\ell}_{56}\cup\widetilde{\ell}_{135}\cup\widetilde{\ell}_{246}\cup E_5 \cup E_6
&\plongement_*(E_5) \cup \plongement_*(E_6)\cup \{s, p\} 
& 2 
\\
\hline
2&
\ell_{56}\cup\ell_{135}\cup\ell_{246}\cup\ell
&\widetilde{\ell}_{135}\cup\widetilde{\ell}_{246}\cup\widetilde{\ell}
&\plongement_*(\widetilde{\ell})
&q+2
\\
\hline
3&
\ell_{16}\cup \ell_{25} \cup \ell_{36} \cup \ell_{45}
&\widetilde{\ell}_{16}\cup \widetilde{\ell}_{25} \cup \widetilde{\ell}_{36} \cup \widetilde{\ell}_{45}
&\plongement_*(\widetilde{\ell}_{16})\cup \plongement_*(\widetilde{\ell}_{25}) \cup \plongement_*(\widetilde{\ell}_{36}) \cup \plongement_*(\widetilde{\ell}_{45})
&0
\\
\hline
4&
\ell_{5}\cup\ell_{6}\cup\ell_{135}\cup\ell_{246}
&\widetilde{\ell}_{5}\cup\widetilde{\ell}_{6}\cup\widetilde{\ell}_{135}\cup\widetilde{\ell}_{246}
&\plongement_*(\widetilde{\ell}_{5})\cup \plongement_*(\widetilde{\ell}_{6}) \cup \{s\}
& 2 
\\
\hline
5&
2\ell_{135}\cup2\ell_{246}
&\widetilde{\ell}_{135}\cup\widetilde{\ell}_{246}\cup E_1 \cup E_2 \cup E_3 \cup E_4
&\plongement_*(E_1) \cup \plongement_*(E_2) \cup \plongement_*(E_3) \cup \plongement_*(E_4) 
&1
\\
\hline
6&
\ell_{135}\cup\ell_{246}\cup q_{56}
&\widetilde{\ell}_{135}\cup\widetilde{\ell}_{246}\cup \widetilde{q}_{56}
&\plongement_*(\widetilde{q}_{56}) \cup \{s\}
& q+2
\\
\hline
7&
2\ell_{56}\cup q_{1234}
&\widetilde{q}_{1234}\cup \{p\}
&\plongement_*(\widetilde{q}_{1234}) \cup \plongement_*(\{p\})
&q+2
\\
\hline
8&
c_{123456} \cup \ell_{56}
&\widetilde{c}_{123456}
&\plongement_*(\widetilde{c}_{123456})
&\Nqg{q}{1}%q+2+\lfloor 2\sqrt{q}\rfloor
\\
\hline
9&
t_{123456} \text{ singular at~$p_5,p_6$}
&\widetilde{t}_{123456}
&\plongement_*(\widetilde{t}_{123456})
&\Nqg{q}{1}
\\
\hline
\end{array}}
$$
%$$
%{\renewcommand{\arraystretch}{1.5}
%\begin{array}{|r|l|l|l||l|l||l|l|}
%\hline
%&\text{On~$\P^2$}  &\text{On~$X$} & \text{On~$X_s$} & \text{Max}   \\
%&                 &              &                 & \text{nb. of pts}\\
%\hline\hline
%1&
%2\ell_{56}\cup\ell_{135}\cup\ell_{246}
%&\widetilde{\ell}_{13}\cup\widetilde{\ell}_{24}
%&s
%&1
%\\
%\hline
%2&
%\ell_{56}\cup\ell_{135}\cup\ell_{246}\cup\ell
%&\widetilde{\ell}_{135}\cup\widetilde{\ell}_{246}\cup\widetilde{\ell}
%&\plongement_*(\widetilde{\ell})
%&q+2
%\\
%\hline
%3&
%2\ell_{56}\cup q_{1234}
%&\widetilde{q}_{1234}
%&\plongement_*(\widetilde{q}_{1234})
%&q+2
%\\
%\hline
%4&
%\ell_{16}\cup \ell_{25} \cup \ell_{36} \cup \ell_{45}
%&\widetilde{\ell}_{16}\cup \widetilde{\ell}_{25} \cup \widetilde{\ell}_{36} \cup \widetilde{\ell}_{45}
%&\plongement_*(\widetilde{\ell}_{16})\cup \plongement_*(\widetilde{\ell}_{25}) \cup \plongement_*(\widetilde{\ell}_{36}) \cup \plongement_*(\widetilde{\ell}_{45})
%&0
%\\
%\hline
%5&
%c_{123456} \cup \ell_{56}
%&\widetilde{c}_{123456}
%&\plongement_*(\widetilde{c}_{123456})
%&\Nqg{q}{1}%q+2+\lfloor 2\sqrt{q}\rfloor
%\\
%\hline
%6&
%t_{123456} \text{ singular at~$p_5,p_6$}
%&\widetilde{t}_{123456}
%&\plongement_*(\widetilde{t}_{123456})
%&\Nqg{q}{1}
%\\
%\hline
%\end{array}}
%$$
Some comments about the numbers of points.

{\bf Case 1 \& 5.} The exceptional curves~$E_5$ and~$E_6$ do not contain any rational points. The other components are all contracted
to the points~$p$ or~$s$. The same is true in case~$5$, without the point~$p$. 

{\bf Case 2.} The ending curve passes through~$p$ but the contraction of~$\widetilde{\ell}_{56}$ does not add any rational
point since~$\ell_{56}$ and~$\ell$ meet at a rational point. The ending curve passes through~$s$ and the contraction of the roots add
a point if~$\ell$ meet~$\ell_{135}$ and~$\ell_{246}$ outside the meeting point of these two curves.

{\bf Case 3.} All theses lines and~$\ell_{135}$, $\ell_{246}$ and~$\ell_{56}$ are separated by the blowing ups. Since the four lines
cannot contain any rational point, neither does their image in~$X_s$.

{\bf Case 4.} The conjugate lines~$\ell_5$ and~$\ell_6$ contain a unique rational point, their intersection point, to which
is added the point~$s$.

{\bf Case 6.} The curve~$\widetilde{q}_{56}$ no longer meets~$\widetilde{\ell}_{135}$,~$\widetilde{\ell}_{246}$ and~$\widetilde{\ell}_{56}$. The ending curve contains the rational points of~$q_{56}$ plus the point~$s$.

{\bf Case 7.} If~$q_{1234}$ meets~$\ell_{56}$ at two conjugate points then in~$X$, after~$\widetilde{\ell}_{56}$ being contracted,
the strict transform~$\widetilde{q}_{1234}$ passes through~$p$ which is an additional rational point. Necessarily the blowing ups
of~$p_1,\ldots,p_4$ separate the strict transforms~$\widetilde{q}_{1234}$, $\widetilde{\ell}_{135}$ and~$\widetilde{\ell}_{246}$ and
the roots contraction does not add any point.

{\bf Cases 8 \& 9.} Same as~\S\ref{sDeg44A1}.

Finally~$N_q\left(-K_{X_s}\right)\leq N_q(1)$.

\medbreak

As in the previous example, one has~$\#X_s(\F_q) = q^2+1$, and for~$q=2$, the evaluation map may not be injective.

\begin{prop}
Suppose~$q\not=2$.
Let~$p_1,p_2=p_1^{\sigma},p_3=p_1^{\sigma^2},p_4=p_1^{\sigma^3}\in\P^2$ be four conjugate points in general position (no three of them are
collinear), let~$p_5$ be a point of the line~$(p_1p_3)$ inside~$\P^2(\F_{q^2})$ and let~$p_6 = p_5^{\sigma}$ in such a way
that~$p_6$ lies on~$(p_2p_4)$.
The anticanonical code of the weak del Pezzo surface obtained
by blowing up these six points and then blowing down the strict transform of the line~$(p_5p_6)$ has
parameters~$[q^2+1,5,\geq q^2 + 1 - \Nqg{q}{1}]$.
\end{prop}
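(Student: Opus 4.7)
The plan is to invoke Proposition~\ref{propParameters} with the surface $X_s$ and the Cartier divisor $-K_{X_s}$, plugging in the three quantities that have just been computed or bounded in this subsection. Since the heavy lifting — the computation of class groups and the exhaustive list of decomposition types in $|-K_{X_s}|$ — has already been carried out above, the proof reduces to combining these inputs and checking a single injectivity condition on the evaluation map.

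First I would verify the length $n = \#X_s(\F_q) = q^2+1$ by tracking rational points along the diagram $\P^2 \overset{\pi}{\longleftarrow} Y \overset{\chi}{\longrightarrow} X \overset{\varphi}{\longrightarrow} X_s$. None of the six points $p_1,\ldots,p_6$ is $\F_q$-rational (they are of degrees $4,4,4,4,2,2$), so $\pi$ preserves the count and $\#Y(\F_q) = q^2+q+1$. Contracting the rational $(-1)$-curve $\widetilde{\ell}_{56}$, which carries $q+1$ rational points (the meetings with $E_5,E_6$ being of degree $2$), and replacing them by one rational point yields $\#X(\F_q) = q^2+1$. Finally, the two conjugate effective roots $\widetilde{\ell}_{135}$ and $\widetilde{\ell}_{246}$ meet at the unique $\F_q$-rational point $r = \ell_{135}\cap\ell_{246}$ and contain no other rational point, so contracting them to the rational singular point $s$ leaves the count unchanged.

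Next, the dimension is $k = d+1 = 5$, where $d=4$ is the degree of $X$, provided the evaluation map is injective. By Proposition~\ref{propParameters}(iii) injectivity holds once $\#X_s(\F_q) > N_q(-K_{X_s})$, and the tabular above combined with the Weil--Serre bound gives $N_q(-K_{X_s}) \leq N_q(1) \leq q+1+\lfloor 2\sqrt{q}\rfloor$. The inequality $q^2+1 > q+1+\lfloor 2\sqrt{q}\rfloor$ holds for every $q \geq 3$, so the hypothesis $q\neq 2$ suffices. The bound on the minimum distance then follows immediately from Proposition~\ref{propParameters}(iii):
\[
d_{\min} \;\geq\; n - N_q(-K_{X_s}) \;\geq\; q^2+1 - N_q(1).
\]

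The only genuinely non-trivial ingredient is the bound $N_q(-K_{X_s}) \leq N_q(1)$, i.e., the assertion that no absolutely reducible section of $|-K_{X_s}|$ carries more than $N_q(1)$ rational points; this is precisely what the nine-row case analysis in the tabular (cases~1--9) establishes, using along the way that $\ell_{135}$ and $\ell_{246}$ intersect any absolutely irreducible conic in at most two points, which rules out an orbit of type $q_{1356}\cup q_{2456}$ and so simplifies the list compared with~\S\ref{sDeg44A1}. Once that bound is in hand, assembling the three parameters is a direct application of the general framework laid out in \S\ref{sEvaluationCodes}.
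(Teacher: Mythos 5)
Your proposal is correct and follows essentially the same route as the paper: track the point count through $\pi$, $\chi$ and $\varphi$ to get $n=q^2+1$, deduce injectivity of the evaluation map from $q^2+1>N_q(1)$ for $q\geq 3$ (which is exactly why $q=2$ is excluded), and feed the bound $N_q(-K_{X_s})\leq N_q(1)$ from the decomposition tabular into Proposition~\ref{propParameters}. The paper's own justification is just a terser version of the same argument, so there is nothing to add.
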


\paragraph{Computation of the global sections from~$\P^2$.~---} This example looks like the previous one
and we do not find a nice explicit basis for the global sections. A slightly modification of the code given for the previous
example leads to a program which permits to compute a generator matrix.

\subsection{Degree~$4$, singularity of type~$\mathbf{D}_5$} \label{sDeg4D5}

This example corresponds to the type number~$58$ in degree~$4$ \cite{Classification}.

\paragraph{Configuration to blow-up.~---} In this example, the surfaces~$Y$ and~$X$ of diagram~\eqref{eqP2YXXs} are equal and
we obtain directly the surface~$X$ by blowing up~$\P^2$ at five rational points~$p_1 \prec p_2 \prec \cdots\prec p_5$, with~$p_1,p_2,p_3$
collinear. Let us denote by~$\pi_1,\ldots,\pi_5$ these five blowups at~$p_1,\ldots,p_5$ respectively:
%$$
%\P^2
%\overset{\pi_1}{\longleftarrow} X_1
%\overset{\pi_2}{\longleftarrow} X_2
%\overset{\pi_3}{\longleftarrow} X_3
%\overset{\pi_4}{\longleftarrow} X_4
%\overset{\pi_5}{\longleftarrow} X
%$$
$$
\begin{tikzpicture}[>=latex,baseline=(M.center)]
\matrix (M) [matrix of math nodes,row sep=0.5cm,column sep=1cm]
{
|(P2)| \P^2 & |(X1)| X_1 & |(X2)| X_2 & |(X3)| X_3 & |(X4)| X_4 & |(X)| X\\
};
\draw[->] (X1) -- (P2) node[midway,below] {$\pi_1$} ;
\draw[->] (X2) -- (X1) node[midway,below] {$\pi_2$} ;
\draw[->] (X3) -- (X2) node[midway,below] {$\pi_3$} ;
\draw[->] (X4) -- (X3) node[midway,below] {$\pi_4$} ;
\draw[->] (X) -- (X4) node[midway,below] {$\pi_5$} ;
\draw[->] (X) to[bend right=20] (P2) node[midway,yshift=0.6cm] {$\pi$} ;
\end{tikzpicture}
$$
The fact that~$p_1,p_2,p_3$ are collinear means that there is a line~$\ell_{123}$ of~$\P^2$ whose strict transform by~$\pi_1$ passes
through~$p_2$ and whose strict transform by~$\pi_2\circ\pi_1$ passes through~$p_3$. The anticanonical model of this weak del Pezzo
surface has a unique singular point of type~$\mathbf{D}_5$ (since there are five irreducible effective roots whose intersection
graph is~$\mathbf{D}_5$, see the picture at the end of this example).

\paragraph{Computation of the divisor class groups.~---}
Since all the blown-up points are rational, there is no need to work with the base change~$\overline{X}$.
The irreducible effective classes of roots are the strict transform of~$\ell_{123}$ and of~$E_1,E_2,E_3,E_4$, whose
classes in~$\cldivweil(X)$ are:
\begin{align*}
&E_0-E_1-E_2-E_3,
&
&E_1-E_2,
&
&E_2-E_3,
&
&E_3-E_4,
&
&\text{and}
&
&E_4-E_5.
\end{align*}
The submodule~$\RR$, generated by these classes, is a direct summand and for example~$\cldivweil(X) = \RR \oplus \Z E_5$; the projection onto the factor~$\Z E_5$
leads to an isomorphism~$\cldivweil(X)/\RR \to \Z E_5$. As for the submodule~$\RR^\perp$, it is defined by the
equations~$a_1=a_2=\cdots=a_5$ and~$a_0+a_1+a_2+a_3=0$ and thus~$\RR^\perp = \Z K_X$.
Since
$$
-K_X = 3\left(E_0-E_1-E_2-E_3\right) + 2\left(E_1-E_2\right) + 4\left(E_2-E_3\right) + 6\left(E_3-E_4\right) + 5\left(E_4-E_5\right) + 4E_5,
$$
via the preceding isomorphism, the module~$\RR^\perp$ embeds via~$-K_X \mapsto 4E_5$.

In brief, both divisor class groups~$\cldivcartier(X_s)$ and~$\cldivweil(X_s)$ are free rank one $\Z$-modules, the first one being of index~$4$
in the latter via the canonical embedding. 

For this example, it makes sense to reverse the order of the paragraphs and we start
to compute a basis of the global sections.

\paragraph{Computation of the global sections from~$\P^2$.~---} We need to compute a basis of the sublinear system on~$\P^2$
$$
\left|3\ell - p_1 - \cdots - p_5\right|.
$$
So we consider a cubic of~$\P^2_{X,Y,Z}$ whose restriction
to the affine space~$\A^2_{x_1,y_1}$ ($x_1=\frac{X}{Z}$, $y_1=\frac{Y}{Z}$ and~$Z\not=0$) is defined by the equation:
$$
C_1(x_1,y_1)=
a_{30}x_1^3 + a_{21}x_1^2y_1 + a_{20}x_1^2 + a_{12}x_1y_1^2 + a_{11}x_1y_1 + a_{10}x_1 + a_{03}y_1^3 + a_{02}y_1^2 + a_{01}y_1 + a_{00}
=0
$$
We choose~$p_1=(0,0) \in \A^2_{x_1,y_1}$. The cubic passes through the point~$p_1$ if and only if~$a_{00}=0$.

Let~$x_2,y_2$ be the coordinates of the affine chart of the blowing up of~$\A^2_{x_1,y_1}$ at~$p_1$ defined by
$x_1 = x_2$ and~$y_1=x_2y_2$.
In this chart, the exceptional divisor~$E_1$ has equation~$x_2=0$ and the strict transform of~$C_1$ is defined by:
$$
C_2(x_2,y_2)
=
\frac{1}{x_2}C_1(x_2,x_2y_2)
=
a_{30}x_2^2 + a_{21}x_2^2y_2 + a_{20}x_2 + a_{12}x_2^2y_2^2 + a_{11}x_2y_2 + a_{10} + a_{03}x_2^2y_2^3 + a_{02}x_2y_2^2 + a_{01}y_2
=0.
$$
We choose~$p_2=(0,0) \in \A^2_{x_2,y_2}$ which corresponds to the line~$\ell_{123}$ with affine equation~$y_1=0$ in~$\A^2_{x_1,y_1}$
or~$Y. = 0$ in~$\P^2_{X,Y,Z}$.
The cubic passes through the point~$p_2$ if and only if~$a_{10}=0$.

Let~$x_3,y_3$ be the coordinates of the affine chart of the blowing up of~$\A^2_{x_2,y_2}$ at~$p_2$ defined
by~$x_2 = x_3$ and~$y_2=x_3y_3$. In this chart, the exceptional divisor~$E_2$ has equation~$x_3=0$ and the strict transform of~$C_2$ is
defined by:
$$
C_3(x_3,y_3)
=
\frac{1}{x_3}C_2(x_3,x_3y_3)
=
a_{30}x_3 + a_{21}x_3^2y_3 + a_{20} + a_{12}x_3^3y_3^2 + a_{11}x_3y_3 + a_{03}x_3^4y_3^3 + a_{02}x_3^2y_3^2+ a_{01}y_3
=0.
$$
Since~$p_1,p_2,p_3$ are colinear, we have to choose~$p_3=(0,0) \in \A^2_{x_3,y_3}$. The cubic passes through the point~$p_3$
if and only if~$a_{20}=0$.

Let~$x_4,y_4$ be the coordinates of the affine chart of the blowing up of~$\A^2_{x_3,y_3}$ at~$p_3$ defined
by~$x_3 = x_4$ and~$y_3=x_4y_4$. In this chart, the exceptional divisor~$E_3$ has equation~$x_4=0$ and the strict transform of~$C_3$ is
defined by:
$$
C_4(x_4,y_4)
=
\frac{1}{x_4}C_2(x_4,x_4y_4)
=
a_{30} + a_{21}x_4^2y_4 + a_{12}x_4^4y_4^2 + a_{11}x_4y_4 + a_{03}x_4^6y_4^3 + a_{02}x_4^3y_4^2+ a_{01}y_4
=0.
$$
Since~$p_4\in E_3$, we have to choose~$p_4=(0,\alpha) \in \A^2_{x_4,y_4}$ and since~$p_1,p_2,p_3,p_4$ are {\bfseries not} colinear,
necessarily~$\alpha\not=0$. The cubic passes through the point~$p_4$ if and only if~$a_{30}+\alpha a_{01}=0$.

Last, let~$x_5,y_5$ be the coordinates of the affine chart of the blowing up of~$\A^2_{x_4,y_4}$ at~$p_4$ defined
by~$x_4 = x_5$ and~$y_4=\alpha+x_5y_5$. In this chart, the exceptional divisor~$E_4$ has equation~$x_5=0$ and the strict transform of~$C_4$ is
defined by:
\begin{align}\label{eqC5}
\begin{split}
C_5(x_5,y_5)
&=
\frac{1}{x_5}C_2(x_5,x_5y_5)\\
&=
\frac{1}{x_5}\left[a_{30} + a_{21}x_5^2(\alpha+x_5y_5) + a_{12}x_5^4(\alpha+x_5y_5)^2 + a_{11}x_5(\alpha+x_5y_5) + a_{03}x_5^6(\alpha+x_5y_5)^3\right.\\
&\qquad\quad\left. + a_{02}x_5^3(\alpha+x_5y_5)^2 + a_{01}(\alpha+x_5y_5)\right]\\
&\equiv \alpha a_{11} +\alpha a_{21}x_5 + a_{01}y_5 + a_{11}x_5y_5 \bmod{x_5^2\F_q[x_5,y_5]}.
\end{split}
\end{align}
Since~$p_5\in E_4$, one can choose~$p_5=(0,\beta) \in \A^2_{x_5,y_5}$.
The cubic passes through the point~$p_5$ if and only if~$\alpha a_{11}+\beta a_{01}=0$.

To sum up, the global sections are defined by
\begin{align*}
&a_{00}=a_{10}=a_{20}=0,
&
&a_{30}=-\alpha a_{01},
&
&\text{and}
&
&a_{11} = -\frac{\beta}{\alpha}a_{01}.
\end{align*}
The fact that~$\alpha\not=0$ is important here. In the projective setting, this
leads to the basis
\begin{equation}\label{eqSectionsDegre4Type58}
\left|3\ell - p_1 - \cdots - p_5\right|
=
\left\langle
\alpha YZ^2 - \beta XYZ - \alpha^2 X^3,Y^3,X^2Y,XY^2,Y^2Z
\right\rangle_{\F_q}.
\end{equation}

\paragraph{Types of decomposition into irreducible components in~$\left|-K_X\right|$.~---}
Since~$\cldivcartier(X_s)$ if of index~$4$ inside~$\cldivweil(X_s)$, even if these two groups are free of rank~$1$, an irreducible Cartier divisor may
decompose into Weil irreducible components. In order to lower
bound the minimum distance, we need to review all these kinds of decompositions into irreducible components for the curves of the
anticanonical linear system on~$X_s$. As usual, we start form~$\P^2$ and use the one-to-one correspondences:
$$
\begin{array}{ccccc}
\left|3\ell - p_1 - \cdots - p_5\right|_{\P^2} & \longrightarrow & \left|-K_X\right|_{X} &\longrightarrow & \left|-K_{X_s}\right|_{X_s}\\
C & \longmapsto & C^\sharp & \longmapsto &\plongement\left(C^\sharp\right)
\end{array}
$$
where~$C^\sharp$ denotes the virtual transform of~$C$ in the composition of the five blowups.

Thanks to the preceding computation, for every curve in~$\left|3\ell - p_1 - \cdots - p_5\right|_{\P^2}$ there
exists~$\alpha_1,\ldots,\alpha_5 \in \F_q$ such this curves is defined by
$$
\alpha_1\left(\alpha YZ^2 - \beta XYZ - \alpha^2 X^3\right)+\alpha_2 Y^3+\alpha_3 X^2Y + \alpha_4 XY^2 + \alpha_5 Y^2Z = 0.
$$
We deduce that  such a curve can decompose in six different ways, as listed in the tabular below:
\begin{itemize}
\item either a cubic~$c_{12345}$ for which~$p_1$ is a smooth flex point with tangent line equal to~$\ell_{123}$,
if~$\alpha_1\not=0$ (case~$1$);
\item or a cubic singular at~$p_1$ which contains~$\ell_{123}$ as a component, if~$\alpha_1=0$, the complementary component,
of discriminant~$\alpha_3\alpha_5^2$ (up to a constant), being
either
\begin{itemize}
\item a quadric~$q_{12}$ smooth at~$p_1$ with tangent line~$\ell_{123}$, if~$\alpha_3\not=0$ and~$\alpha_5\not=0$ (case~2),
\item or the union of two lines, if~$\alpha_3\not=0$ and~$\alpha_5=0$, $\alpha_3=0$ and~$\alpha_5\not=0$,
$\alpha_3=\alpha_5=0$ and~$\alpha_4\not=0$, $\alpha_3=\alpha_4=\alpha_5=0$
(cases~$3,4,5,6$ respectively).
\end{itemize}
\end{itemize}
$$
{\renewcommand{\arraystretch}{1.5}
\begin{array}{|l|l|l|l||l|}
\hline
&\left|3\ell - \sum_{i=1}^5 p_i\right|  &\left|-K_X\right| & \left|-K_{X_s}\right| & \text{Max}   \\
&\text{on~$\P^2$}                 & \text{on~$X$}             & \text{on~$X_s$}  & \text{nb. of pts}\\
\hline\hline
1
&c_{12345}
&\widetilde{c}_{12345}
&\plongement_*(\widetilde{c}_{12345})
&\Nqg{q}{1}
\\
\hline
2
&\ell_{123} \cup q_{12}
&\widetilde{\ell}_{123}\cup \widetilde{q}_{12}\cup \widetilde{E}_1\cup 2\widetilde{E}_2\cup 2\widetilde{E}_3\cup \widetilde{E}_4
& \plongement_*(\widetilde{q}_{12}) & q+1
\\
\hline
3
&\ell_{123} \cup \ell_1 \cup \ell_1'
&\widetilde{\ell}_{123}\cup 2\widetilde{E}_1\cup 2\widetilde{E}_2\cup 2\widetilde{E}_3\cup \widetilde{E}_4\cup \widetilde{\ell}_1\cup \widetilde{\ell}_1'
&\plongement_*(\widetilde{\ell}_1) \cup \plongement_*(\widetilde{\ell}_1') & 2q+1
\\
4
&2\ell_{123} \cup \ell
&2\widetilde{\ell}_{123}\cup \widetilde{E}_1\cup 2\widetilde{E}_2\cup 3\widetilde{E}_3\cup 2\widetilde{E}_4\cup E_5\cup \widetilde{\ell}
&\plongement_*(E_5) \cup \plongement_*(\widetilde{\ell})
&2q+1
\\
5
&2\ell_{123} \cup \ell_1
&2\widetilde{\ell}_{123}\cup 2\widetilde{E}_1\cup 3\widetilde{E}_2\cup 4\widetilde{E}_3\cup 3\widetilde{E}_4\cup 2E_5\cup \widetilde{\ell}_1
&2\plongement_*(E_5) \cup \plongement_*(\widetilde{\ell}_1)
&2q+1
\\
6
&3\ell_{123}
&3\widetilde{\ell}_{123}\cup 2\widetilde{E}_1\cup 4\widetilde{E}_2\cup 6\widetilde{E}_3\cup 5\widetilde{E}_4\cup 4E_5
&4\plongement_*(E_5)
&q+1
\\
\hline
\end{array}}
$$
%$$
%{\renewcommand{\arraystretch}{1.5}
%\begin{array}{|l|l|l||l|l||l|l|}
%\hline
%\text{On~$\P^2$}  &\text{On~$X$} & \text{On~$X_s$} & \text{Max}   \\
%                  &              &                 & \text{nb. of pts}\\
%\hline\hline
%c,\quad p_1 \text{ infl. pt}, T_{p_1}c = \ell, + \text{2 cond.}
%&\widetilde{c} 
%&\plongement(\widetilde{c})
%&q+1 + \lfloor 2\sqrt{q}\rfloor
%\\
%\hline
%\ell \cup q,\quad T_{p_1} q = \ell
%&\widetilde{l}\cup \widetilde{q}\cup \widetilde{E}_1\cup 2\widetilde{E}_2\cup 2\widetilde{E}_3\cup \widetilde{E}_4
%& \plongement(\widetilde{q}) & q+2
%\\
%\hline
%\ell \cup \ell_1 \cup \ell_2,\quad \ell\cap\ell_1\cap\ell_2 = \{p_1\}
%&\widetilde{\ell}\cup 2\widetilde{E}_1\cup 2\widetilde{E}_2\cup 2\widetilde{E}_3\cup \widetilde{E}_4\cup \widetilde{\ell}_1\cup \widetilde{\ell}_2
%&\plongement(\widetilde{\ell}_1) \cup \plongement(\widetilde{\ell}_2) & 2q+1
%\\
%\hline
%2\ell \cup \ell_1,\quad \ell\cap\ell_1 = \{p_1\}
%&2\widetilde{\ell}\cup 2\widetilde{E}_1\cup 3\widetilde{E}_2\cup 4\widetilde{E}_3\cup 3\widetilde{E}_4\cup 2E_5\cup \widetilde{\ell}_1
%&2\plongement(E_5) \cup \plongement(\widetilde{\ell}_1)
%&2q+1
%\\
%\hline
%3\ell
%&3\widetilde{\ell}\cup 2\widetilde{E}_1\cup 4\widetilde{E}_2\cup 6\widetilde{E}_3\cup 5\widetilde{E}_4\cup 4E_5
%&4\plongement(E_5)
%&q+1
%\\
%\hline
%\end{array}}
%$$
Let us give details for the computation of the virtual transform~$\pi^\sharp(C)$ if, for example,~$C=\ell_{123}\cup\ell_1\cup\ell'_1$:
\begin{align*}
C_1&= \pi_{1}^\sharp(C) = \widetilde{\ell}_{123} + \widetilde{\ell}_1 + \widetilde{\ell}'_1 + 2E_1
&& \left[p_1\in\ell_{123}\cap\ell_1\cap\ell'_1 \; \Rightarrow \; m_{p_1}(C)=3\right]\\
C_2&= \pi_2^\sharp(C_1) = \widetilde{\ell}_{123} + \widetilde{\ell}_1 + \widetilde{\ell}'_1 + 2\widetilde{E}_1 + 2E_2
&& \left[p_2\in \widetilde{\ell}_{123}\cap E_1 \; \Rightarrow \; m_{p_2}(C_1) = 3\right]\\
C_3&= \pi_3^\sharp(C_2) = \widetilde{\ell}_{123} + \widetilde{\ell}_1 + \widetilde{\ell}'_1 + 2\widetilde{E}_1 + 2\widetilde{E}_2 + 2E_3
&& \left[p_3\in \widetilde{\ell}_{123}\cap E_2 \; \Rightarrow \; m_{p_3}(C_2) = 3\right]\\
C_4&= \pi_4^\sharp (C_3) = \widetilde{\ell}_{123} + \widetilde{\ell}_1 + \widetilde{\ell}'_1 + 2\widetilde{E}_1 + 2\widetilde{E}_2 + 2\widetilde{E}_3 +E_4 && \left[p_4\in E_3 \;\Rightarrow\; m_{p_4}(C_3) = 2\right]\\
C^\sharp &= \pi_5^\sharp (C_4) = \widetilde{\ell}_{123} + \widetilde{\ell}_1 + \widetilde{\ell}'_1 + 2\widetilde{E}_1 + 2\widetilde{E}_2 + 2\widetilde{E}_3 +\widetilde{E}_4
&& \left[p_5\in E_4 \; \Rightarrow \; m_{p_5}(C_4) = 1\right].
\end{align*}
This leads to the following decomposition of the canonical class into a sum of effective classes
$$
-K_X
=
(E_0-E_1-E_2-E_3)+(E_0-E_1)+(E_0-E_1)+2(E_1-E_2)+2(E_2-E_3)+2(E_3-E_4)+(E_4-E_5)
$$
The intersection graph of the irreducible effective roots in~$X$ is connected (see figure below) and all these curves are contracted
by the morphism~$\plongement$ to a single rational singular pointy~$s$ (of singularity type~$D_5$).
\begin{center}
\begin{tikzpicture}[>=latex,scale=0.7] %,baseline=
\draw (-1,1.5) node {on~$X$};
\draw[thick,red] (-1,-1.5) node[below] {$\widetilde{E}_1(-2)$} -- (-1,0.5);
\draw[thick,red] (-1.5,0) node[left] {$\widetilde{E}_2(-2)$} -- (1,0);
\draw[thick,red] (0,-0.8) node[below] {$\widetilde{E}_3(-2)$} -- (0,2.5);
\draw[thick,red] (-0.5,1) -- (2,1) node[right] {$\widetilde{\ell}_{123}(-2)$} ;
\draw[thick,red] (-0.5,2) -- (2,2) node[right] {$\widetilde{E}_4(-2)$};
\draw[thick] (1,1.5) -- (1,3) node[above] {$E_5(-1)$};
\draw[->] (4.5,2) -- (6,2) node[midway,above] {$\plongement_*$};
\draw[thick] (7,1.5) -- (7,3) node[above] {$\plongement_*(E_5)$};
\draw[red] (7,2) node {$\bullet$} node[right] {$s$};
\draw (8,2.5) node {on~$X_s$};
\end{tikzpicture}
\end{center}
We comment on the numbers of points.

{\bfseries Cases~2 \& 6.} All the components in~$X$ are roots that are contracted, except~$\widetilde{q}_{12}$ and~$E_5$ respectively.
These two strict transforms meet the tree of roots at only one point and by~$\plongement_*$ they are mapped to isomorphic curves
that pass through~$s$.

{\bfseries Case~3.} Except~$\widetilde{\ell}_1$ and~$\widetilde{\ell}'_1$, all the components on~$X$ are irreducible effective roots and
they are mapped to the point~$s$ by the morphism~$\plongement_*$. After the contraction,
the curves~$\plongement_*(\widetilde{\ell}_1)$ and~$\plongement_*(\widetilde{\ell}'_1)$ meet at this singular point, thus their union
contains~$2q+1$ rational points.

{\bfseries Cases~4 \& 5.} The line~$E_5$ does not intersect the lines~$\widetilde{\ell}$ or~$\widetilde{\ell}_1$ in~$X$.
However, since~$\ell$ and~$\ell_{123}$ meet at some point of~$\P^2$
(not equal to~$p_1$), the lines~$\widetilde{\ell}$ and~$\widetilde{\ell}_{123}$ intersect in~$X$; in the same way
since~$\ell_1$ passes through~$p_1$, the lines~$\widetilde{\ell}_1$ and~$\widetilde{E}_1$ intersect in~$X$. Therefore,
$\plongement_*(E_5)$ and~$\plongement_*(\widetilde{\ell})$ or~$\plongement_*(E_5)$ and~$\plongement_*(\widetilde{\ell}_1)$ both intersect at~$s$.
Thus the two unions has~$2q+1$ rational points.

Finally~$N_q\left(-K_{X_s}\right) = 2q+1$.

\medbreak

The surface~$X_s$ has a unique singular point~$s$. All the irreducible effective roots of~$X$,
that is~$\widetilde{\ell}_{123},\widetilde{E}_1,\ldots,\widetilde{E_4}$ are contracted to this single point.
The last exceptional curve~$E_5$ meets~$E_4$ and thus~$\plongement(E_5)$ passes through~$s$. In conclusion
the rational points of~$X_s(\F_q)$ are in one-to-one
correspondence with~$\left(\P^2(\F_q)\setminus\ell_{123}(\F_q)\right)\cup E_5(\F_q)$, which counts~$q^2+q+1$ elements.
This number is always strictly greater that~$2q+1$ and the evaluation map is always injective.

\begin{prop}
Let~$p_1\prec p_2\prec p_3\prec p_4\prec p_5$ be infinitely near rational points. Suppose that the first three ones are collinear.
The anticanonical code of the weak del Pezzo surface obtained
by blowing up these points has parameters~$[q^2+q+1,5,q^2-q]$.
\end{prop}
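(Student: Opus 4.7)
The plan is to assemble the three parameters from the preceding computations via Proposition~\ref{propParameters}; since the divisor class groups, the global sections and the decomposition types in $\left|-K_{X_s}\right|$ have all been worked out just above, the proof is mostly a verification that the pieces fit together.

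First I would settle the length and dimension. All five blown-up points are $\F_q$-rational, so $X$ acquires many new rational points, but the chain $\widetilde{\ell}_{123},\widetilde{E}_1,\widetilde{E}_2,\widetilde{E}_3,\widetilde{E}_4$ of effective roots is entirely contracted onto a single rational point $s$ of $X_s$, while $E_5$ survives and meets the singular point; as observed just before the statement this gives the bijection $X_s(\F_q)\leftrightarrow(\P^2(\F_q)\setminus\ell_{123}(\F_q))\cup E_5(\F_q)$ and hence $n=q^2+q+1$. For the dimension, the explicit basis in~\eqref{eqSectionsDegre4Type58} shows $\dim H^0(X_s,-K_{X_s})=5$, and the evaluation map is injective because any nonzero section in its kernel would define a curve of $\left|-K_{X_s}\right|$ with at least $q^2+q+1$ rational points, contradicting $N_q(-K_{X_s})=2q+1<q^2+q+1$ (valid for every prime power $q$).

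Next I would derive the minimum distance. Proposition~\ref{propParameters} immediately gives the lower bound $\dmin\geq n-N_q(-K_{X_s})=q^2-q$, using the value $N_q(-K_{X_s})=2q+1$ read off from the last column of the decomposition table (cases~1 and~2 contribute at most $N_q(1)\leq\Weil\leq 2q+1$ and $q+1$ respectively, while cases~3--5 realise $2q+1$ exactly, case~6 gives $q+1$). For the matching upper bound I would exhibit one explicit codeword of weight $q^2-q$: the global section $Y^2Z\in\eqref{eqSectionsDegre4Type58}$ corresponds on $\P^2$ to $2\ell_{123}\cup\{Z=0\}$ which is case~4 of the table, so its image curve on $X_s$ is $\plongement_*(E_5)\cup\plongement_*(\widetilde{\{Z=0\}})$, the union of two rational curves meeting only at $s$, hence $2q+1$ rational points. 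Consequently the corresponding codeword has weight $(q^2+q+1)-(2q+1)=q^2-q$, and $\dmin=q^2-q$.

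The genuinely delicate point, and the only step I expect to require real care, is verifying that the case-4 configuration does contribute a rational point at $s$ via the contraction of $\widetilde{\ell}_{123}$ (so that one really obtains $2q+1$ and not $2q$): one must check that the strict transform of $\{Z=0\}$ meets the chain of roots exactly where expected and that no rational point of $E_5(\F_q)$ is accidentally zero of the section $Y^2Z$. Everything else is bookkeeping on the basis~\eqref{eqSectionsDegre4Type58} and direct application of Proposition~\ref{propCaClCl} together with the decomposition analysis.
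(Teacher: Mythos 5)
Your proof is correct and follows the paper's own route: the length comes from the point count $\left(\P^2(\F_q)\setminus\ell_{123}(\F_q)\right)\cup E_5(\F_q)$, the dimension from injectivity of the evaluation map since $q^2+q+1>2q+1=N_q\left(-K_{X_s}\right)$, and the lower bound $\dmin\geq q^2-q$ from the decomposition table giving $N_q\left(-K_{X_s}\right)=2q+1$. Your explicit weight-$(q^2-q)$ codeword $Y^2Z$ (case~4 of the table, whose zero curve $\plongement_*(E_5)\cup\plongement_*(\widetilde{\ell})$ with $\ell=\{Z=0\}$ has exactly $2q+1$ rational points, the two components meeting only at $s$) makes the attainment of the bound explicit, which the paper leaves implicit in its case analysis; note only that the entire curve $\plongement_*(E_5)$ lies in the zero divisor of this section, so there is nothing to check about ``accidental'' zeros on $E_5(\F_q)$.
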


The construction of a generator matrix of this code is a nice application of proposition~\ref{propCodeBlowingup}. It
consists of two blocks, the left one, of size~$5\times q^2$, contains the evaluations of the five
global sections of~(\ref{eqSectionsDegre4Type58}) at every point of~$\P^2(\F_q)\setminus\ell_{123}(\F_q)$, the right
one, of size~$5\times(q+1)$ contains the evaluations of the homogeneous parts of degree~$1$ of the five
global sections of~(\ref{eqSectionsDegre4Type58}) at every point of~$\P^1(\F_q)$. Letting~$y_5 = \beta + z_5$ in~\eqref{eqC5}, this
homogeneous part of degree~$1$ equals~$(\alpha a_{21}+\beta a_{11})x_5+a_{01}z_5$. Finally, we get the explicit matrix:
$$
\left(
\begin{array}{cc|cc}
\alpha yz^2 - \beta xyz - \alpha^2 x^3&\vdots
&-\beta^2 u+ \alpha v&\vdots
\\
y^3&\vdots
&0&\vdots
\\
x^2y& (x:y:z)\in\P^2(\F_q) \mid y\not=0
&\alpha u&(u:v)\in\P^1(\F_q)
\\
xy^2&\vdots
&0&\vdots
\\
y^2z&\vdots
&0&\vdots
\end{array}
\right),
$$
where~$\alpha\in\F_q^*$ and~$\beta\in\F_q$.

\subsection{Degree~$3$, singularity of type~$\mathbf{A}_1$} \label{sDeg3A1}

This example corresponds to the type number~$11$ in degree~$3$ \cite{Classification}.

\paragraph{Configuration to blow-up and down.~---} We blow up six conjugate points~$p_1,\ldots,p_6\in\P^2$ on a smooth
conic~$q_{123456}$.
\begin{align*}
&\begin{tikzpicture}[baseline=0]
  \def\a{1.5} % horizontal radius
  \def\b{1.0} % vertical radius
  \def\x{-1}\def\xx{0}\def\xxx{1}
  \coordinate (p6) at (\x,{-\b*sqrt(1-(\x/\a)^2)});
  \coordinate (p1) at (\x,{\b*sqrt(1-(\x/\a)^2)});
  \coordinate (p2) at (\xx,{\b*sqrt(1-(\xx/\a)^2)});
  \coordinate (p5) at (\xx,{-\b*sqrt(1-(\xx/\a)^2)});
  \coordinate (p3) at (\xxx,{\b*sqrt(1-(\xxx/\a)^2)});
  \coordinate (p4) at (\xxx,{-\b*sqrt(1-(\xxx/\a)^2)});
  \draw[thick] (0,0) ellipse({\a} and {\b});
  \fill (p1) circle (2pt) node[above left] {$p_1$};
  \fill (p2) circle (2pt) node[above] {$p_2$};
  \fill (p3) circle (2pt) node[above right] {$p_3$};
  \fill (p4) circle (2pt) node[below right] {$p_4$};
  \fill (p5) circle (2pt) node[below] {$p_5$};
  \fill (p6) circle (2pt) node[below left] {$p_6$};
  \draw (-2,0) node[xshift=-0.5cm] {$q_{123456}$};
\end{tikzpicture}
&
&\begin{array}{l}p_2=p_1^\sigma\\p_3=p_1^{\sigma^2}\\p_4=p_1^{\sigma^3}\\p_5=p_1^{\sigma^4}\\p_6=p_1^{\sigma^5}\end{array}
\end{align*}
The resulting surface~$X$ is a weak del Pezzo of degree~$3$, whose anticanonical model~$X_s$ has a unique singular point
of type~$\mathbf{A}_1$.

\paragraph{Computation of the divisor class groups.~---} We have:
\begin{align*}
&\cldivweil(\overline{X}) = \sum_{i=0}^6 \Z E_i
&
&\text{and}
&
&\cldivweil(X) = \Z E_0 \oplus \Z \EE,
&
&\text{where~$\EE = \sum_{i=1}^6 E_i$.}
\end{align*}
There is a unique irreducible effective root, the strict transform of the conic~$q_{123456}$, whose
class is~$2E_0 - \EE$. The root module~$\RR$, generated this class, is a direct summand, $\cldivweil(X) = \RR \oplus \Z E_0$. The projection
onto the second factor leads to an isomorphism
$$
\begin{array}{ccc}
  \cldivweil(X)/\RR & \longrightarrow & \Z E_0\\
  E_0\bmod\RR & \longmapsto & E_0\\
  \EE\bmod\RR & \longmapsto & 2E_0
\end{array}.
$$
As for the module~$\RR^\perp$, inside~$\cldivweil(\overline{X})$ it is defined by the single equation~$2a_0+a_1+\cdots+a_6 = 0$;
after taking the Galois invariants, we obtain~$\cldivcartier(X_s) = \Z K_X$, whose image by the previous isomorphism is also~$\Z E_0$.
Therefore~$\cldivcartier(X_s)\simeq\cldivweil(X_s)$ and both of them are free of rank one.

\paragraph{Types of decomposition into irreducible components in~$\left|-K_X\right|$.~---} This proves that all the sections of the
anticanonical divisor are irreducible. As in our previous work \cite{AntiCanonical}, we expect that the curves of
the associated linear system can contain at most~$\Nqg{q}{1}$ rational points.
However we need to investigate the types of irreducible decompositions. Here this is easy since one can check that the only
Galois orbits of lines or conics or cubics that pass through at least one point~$p_i$ are~$\ell_{14}\cup\ell_{25}\cup\ell_{36}$
or~$q_{123456}$ or~$c_{123456}$ (all the others lead to $\F_q$-curves of degree strictly greater than~$3$).
Combining them in order to construct a curve in the expected sub-linear system leads to very few decompositions:
$$
{\renewcommand{\arraystretch}{1.5}
\begin{array}{|l|l|l|l||l|l||l|l|}
\hline
&\left|3\ell - \sum_{i=1}^6 p_i\right|  & \left|-K_X\right| & \left|-K_{X_s}\right| & \text{Max}   \\
&\text{on~$\P^2$}  & \text{on~$X$}  & \text{on~$X_s$}  & \text{nb. of pts}\\
\hline\hline
1
&\ell_{14} \cup \ell_{25} \cup \ell_{36}
&\widetilde{\ell}_{14}\cup \widetilde{\ell}_{25}\cup\widetilde{\ell}_{36}
&\plongement_*(\widetilde{\ell}_{14}) \cup \plongement_*(\widetilde{\ell}_{25}) \cup \plongement_*(\widetilde{\ell}_{36}) & 1
\\
\hline
2
&\ell \cup q_{123456}
&\widetilde{\ell}\cup \widetilde{q}_{123456}
& \plongement_*(\widetilde{\ell})\ni s & q+2
\\
\hline
3
&c_{123456}
&\widetilde{c}_{123456}
&\plongement_*(\widetilde{c}_{123456})
&\Nqg{q}{1}
\\
\hline
\end{array}}
$$
The number of rational point in case~$1$ is at most~$1$ if the three lines meet. In case~$2$, during the process, if the two
meeting points of~$\ell$ and~$q_{123456}$ are not rational, then the singular point~$s$ is an additional rational point
on~$\plongement_*(\widetilde{\ell})$. We deduce that~$N_q\left(-K_{X_s}\right) \leq N_q(1)$.

\medbreak

Since the blown up points are not rational, the blowing ups do not add point on the surface and~$\#X(\F_q) = q^2+q+1$. Then, the
irreducible effective root is contracted and thus~$\#X_s(\F_q) = q^2+1$. If~$q=2$, the evaluation map may fail to be injective.

\begin{prop}
Suppose~$q\not=2$.
Let~$p_1,\ldots,p_6\in\P^2$ be six conjugate points lying on a smooth conic.
The anticanonical code of the weak del Pezzo surface obtained
by blowing up these points has parameters~$[q^2+1,4,\geq q^2+1-\Nqg{q}{1}]$.
\end{prop}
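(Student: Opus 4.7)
The plan is to derive the three parameters $[n,k,\dmin]$ in succession from the preceding geometric analysis, following the general strategy described at the start of section~\ref{sAnticanonicalCodes}.

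For the length, I would track rational points through the sequence $\P^2 \overset{\pi}{\leftarrow} X \overset{\varphi}{\rightarrow} X_s$. Since the six points~$p_1,\dots,p_6$ form a single Galois orbit of degree~$6$, none is rational and $\pi$ adds no rational points, giving $\#X(\F_q)=\#\P^2(\F_q)=q^2+q+1$. The only irreducible effective root of~$X$ is the strict transform~$\widetilde{q}_{123456}$, a smooth rational curve isomorphic to~$\P^1$ (hence with $q+1$ rational points), which~$\varphi$ contracts to the unique rational singular point~$s$ of type~$\mathbf{A}_1$. Thus $n=\#X_s(\F_q)=(q^2+q+1)-(q+1)+1=q^2+1$.

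For the dimension, since the anticanonical model lies in~$\P^d=\P^3$ (here $d=3$), one has $\dim H^0(X_s,-K_{X_s})=d+1=4$. I still need the evaluation map to be injective: if not, some nonzero global section would vanish on all rational points, giving a curve of~$|-K_{X_s}|$ passing through every point of~$X_s(\F_q)$, whence $\#X_s(\F_q)\leq N_q(-K_{X_s})$. So I would check $q^2+1>N_q(-K_{X_s})$. Since the analysis above shows $N_q(-K_{X_s})\leq N_q(1)\leq q+1+\lfloor 2\sqrt{q}\rfloor$, this inequality is clear as soon as $q\geq 3$, which is exactly our hypothesis.

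For the minimum distance, proposition~\ref{propParameters} gives $\dmin\geq n-N_q(-K_{X_s})$. The table of decompositions above (cases~$1,2,3$) exhausts the types of curves in $|-K_{X_s}|$: either three conjugate $\plongement_*(\widetilde{\ell}_{14}),\plongement_*(\widetilde{\ell}_{25}),\plongement_*(\widetilde{\ell}_{36})$ meeting in at most one rational point; or $\plongement_*(\widetilde{\ell})$ with at most $q+1$ points of~$\ell$ and possibly the extra point~$s$, bounded by $q+2$; or an image of an absolutely irreducible cubic $\widetilde{c}_{123456}$, itself a smooth genus~$1$ curve with at most $N_q(1)$ points. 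For $q\geq 3$ one has $N_q(1)\geq q+2$, hence $N_q(-K_{X_s})\leq N_q(1)$, giving $\dmin\geq q^2+1-N_q(1)$.

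The only potentially delicate step is confirming that the table of decompositions is complete and that the maximum in case~$3$ is attained by a genuinely smooth elliptic curve; but the enumeration of Galois orbits of lines, conics, and cubics passing through some~$p_i$ has already been carried out (ruling out all but $\ell_{14}\cup\ell_{25}\cup\ell_{36}$, $q_{123456}$, and $c_{123456}$), so nothing further is required beyond collecting the three bounds. The main conceptual obstacle, namely the comparison between Cartier and Weil divisors on the singular model, is trivialized here by the class group computation showing $\cldivcartier(X_s)\simeq\cldivweil(X_s)$, which forces every element of $|-K_{X_s}|$ to be irreducible and reduces the analysis to the short list above.
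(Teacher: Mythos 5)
Your proposal is correct and follows essentially the same route as the paper: compute the length by tracking rational points through the blow-ups and the contraction of the unique effective root, get the dimension from $\deg(X)+1$ together with injectivity of the evaluation map (which holds since $q^2+1>N_q(1)$ for $q\geq 3$), and bound the minimum distance via the three-case decomposition table, whose maximum is $N_q(1)$ because $N_q(1)\geq q+2$. The only superfluous remark is that the maximum in the cubic case need not be attained, since the proposition only claims a lower bound on~$\dmin$.
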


\paragraph{Computation of the global sections from~$\P^2$.~---}

Let~$Q$ denote the conic passing through~$p_1,\ldots,p_6$ and let~$L_{14},L_{25},L_{36}$ be the linear forms whose zeros
are the lines~$\ell_{14}$,~$\ell_{25}$,~$\ell_{36}$. Then
$$
H^0\left(\P^2,3\ell-\textstyle\sum_{i=1}^6 p_i\right)
=
\left\langle XQ,YQ,ZQ,L_{14}L_{25}L_{36}\right\rangle_{\F_q}
$$

\subsection{Degree~$3$, singularity of type~$3\mathbf{A}_2$} \label{sDeg33A2}

This example corresponds to the type number~$76$ in degree~$6$ \cite{Classification}. As in section~\ref{sDeg44A1}, this example
appears in Koshelev's work \cite[\S1.1]{Koshelev} but with another point of view.

\paragraph{Configuration to blow-up.~---}
First we blow up three non-collinear conjugate points~$p_1,p_2,p_3$. This leads to a degree~$6$ del Pezzo surface with
three exceptional conjugate curves~$E_1,E_2,E_3$, the other exceptional curves being the strict transforms~$\ell_{12},\ell_{13},\ell_{23}$ of
the lines joining two of the three points. Then we blow up three other points~$p_4,p_5,p_6$ with~$p_{i+3} \succ p_i$, and
more precisely~$p_4$ is the intersection point of~$E_1$ and~$\widetilde{\ell}_{12}$, $p_5$ is the intersection point of~$E_2$ and~$\widetilde{\ell}_{23}$ and~$p_6$ is the intersection point of~$E_3$ and~$\widetilde{\ell}_{13}$. These points are also conjugate and the resulting
surface~$X$ is a weak degree three del Pezzo surface, with three new exceptional curves~$E_4,E_5,E_6$.
The anticanonical model~$X_s$ has three conjugate singular points of type~$\mathbf{A}_2$.
\begin{align*}
&\begin{tikzpicture}[baseline=0]
%  \draw[thick,<->,gray] (-2,-0.5) node[black,below left] {$\ell_{12}$} -- (-1,0.5) node[black,left,near end,xshift=-0.5em,yshift=0.5em] {$p_2$};
%  \draw[thick,<->,gray] (2,-0.5) node[black,below right] {$\ell_{34}$} -- (1,0.5) node[black,right,near end,xshift=0.5em,yshift=0.5em] {$p_4$};
%  \draw (p) node {$\bullet$} node[below,yshift=-0.5em] {$p_1$};
%  \draw (psigma) node {$\bullet$} node[below,yshift=-0.5em] {$p_3$};
%  \draw[thick] (-2.5,0) node[left] {$\ell_{13}$} -- (2,0);
\coordinate (p1) at (-1,-0.5);
\coordinate (p3) at (1,-0.5);
\coordinate (p2) at (0,0.5);
\draw (p1) -- (p2) -- (p3) -- (p1);
\draw[thick,<->,gray] (-1.5,-1) -- (-0.5,0) node[left] {$p_4$};
\draw[thick,<->,gray] (-0.5,1) node[above right] {$p_5$} -- (0.5,0) ;
\draw[thick,<->,gray] (0.4,-0.5) -- (1.6,-0.5) node[above] {$p_6$};
\draw (p1) node {$\bullet$} node[below right] {$p_1$};
\draw (p2) node {$\bullet$} node[above right] {$p_2$};
\draw (p3) node {$\bullet$} node[below] {$p_3$};
\end{tikzpicture}
&
&\begin{array}{ll}p_2=p_1^\sigma&p_5=p_4^\sigma\\p_3=p_1^{\sigma^2}&p_6=p_4^{\sigma^2}.\end{array}
\end{align*}
The point~$p_4$ lies on the strict transform of the line~$(p_1p_2)$, which we denote by~$\ell_{124}$.
In the same way we introduce the lines~$\ell_{235}$ and~$\ell_{136}$.

\paragraph{Computation of the divisor class groups.~---} There are six irreducible effective roots, the strict transforms of~$E_1,E_2,E_3$
and the strict transforms of~$\ell_{124},\ell_{235},\ell_{136}$; their classes are:
\begin{align*}
&R_1=E_1-E_4,
&
&R_2=E_2-E_5,
&
&R_3=E_3-E_6,
\\
&R'_1=E_0-E_1-E_2-E_4,
&
&R'_2=E_0-E_2-E_3-E_5,
&
&R'_3=E_0-E_1-E_3-E_6.
\end{align*}
The absolute Galois group acts on this six root classes as~$(R_1R_2R_3)(R'_1R'_2R'_3)$ and also on the exceptional
curves as~$(E_1E_2E_3)(E_4E_5E_6)$ (the first three exceptional curves are the total transforms of the exceptional curves on the
degree~$6$ del Pezzo surface, they are no longer irreducible).

We have
\begin{align*}
%&\Pic(\overline{X}) = \Z E_0 \oplus \Z E_1 \oplus \Z E_2 \oplus \Z E_3 \oplus \Z E_4 \oplus \Z E_5 \oplus \Z E_6
&\cldivweil(\overline{X}) = \bigoplus_{i=0}^6 \Z E_i
&
%&\RR = \Z R_1 \oplus \Z R_2 \oplus \Z R_3 \oplus \Z R'_4 \oplus \Z R'_5 \oplus \Z R'_6
&\overline{\RR} = \bigoplus_{i=1}^3 \Z R_i \oplus \Z R'_i
&
&\text{and}
&
&\overline{\RR}^\perp = \Z K_X.
\end{align*}
Let us put:
\begin{align*}
&\EE = E_1+E_2+E_3,
&
&\EE' = E_4+E_5+E_6,
&
&\Rcal = R_1+R_2+R_3 = \EE - \EE',
&
&\Rcal' = R'_1+R'_2+R'_3 = 3E_0 -2\EE - \EE'.
\end{align*}
One easily verify that
\begin{align*}
%&\Pic(\overline{X}) = \Z E_0 \oplus \Z E_1 \oplus \Z E_2 \oplus \Z E_3 \oplus \Z E_4 \oplus \Z E_5 \oplus \Z E_6
&\cldivweil(\overline{X})^\Gamma = \Z E_0 \oplus \Z \EE \oplus \Z \EE',
&
%&\RR = \Z R_1 \oplus \Z R_2 \oplus \Z R_3 \oplus \Z R'_4 \oplus \Z R'_5 \oplus \Z R'_6
&\RR = \overline{\RR}^\Gamma = \Z {\mathcal R} \oplus \Z {\mathcal R}' = \Z (\EE - \EE') \oplus \Z (3E_0 -2\EE - \EE'),
&
&\text{and}
&
&\RR^\perp = \Z K_X.
\end{align*}
It turns out that the submodule~$\RR$ is not a direct summand in~$\cldivweil(X)$; indeed 
$$
\RR = \Z (\EE - \EE') \oplus \Z 3(E_0-\EE)
\subset
\Z (\EE - \EE') \oplus \Z (E_0-\EE) \oplus \Z \EE' = \cldivweil(X)
$$
(we have just replaced~$3E_0-2\EE-\EE'$ by~$(3E_0-2\EE-\EE') - (\EE-\EE')$ in the initial basis). Therefore the projection
onto the two last factors leads to an isomorphism:
$$
\begin{array}{ccrcl}
  \cldivweil(X_s) \simeq \cldivweil(X)/\RR & \longrightarrow & \Z/3\Z (E_0-\EE) &\oplus &\Z\EE'\\
  a_0 E_0 + a\EE + a'\EE'\bmod\RR & \longmapsto & (a_0 \bmod 3)\left(E_0-\EE\right) &+ &(a_0+a+a')\EE'
\end{array}
$$
Via this isomorphism the group~$\cldivcartier(X_s) = \RR^\perp = \Z K_X$ embeds via~$-K_X \mapsto \EE'$; this means that~$\cldivcartier(X_s)$ is isomorphic
to the free part of~$\cldivweil(X_s)$ and these two groups are free of rank one.

\paragraph{Types of decomposition into irreducible components in~$\left|-K_X\right|$.~---}
As in the previous case, the global sections of the divisor~$\left|-K_{X_s}\right|$ are irreducible but not necessarily absolutely
irreducible. As usual, we list the Galois orbits of lines or conics or cubics of degree less than~$3$ that pass through at least
one of the six points. The only possibilities are
\begin{align*}
&\ell_1\cup\ell_2\cup\ell_3,
&
&\ell_{124}\cup\ell_{235}\cup\ell_{136},
&
&q_{123},
&
&c_{123},
&
&c_{123456}.
\end{align*}
(it is important to keep in mind that a curve which passes through~$p_4$ necessarily passes through~$p_1$). There are only
two combinations that lead to a cubic which passes through the six points:
$$
{\renewcommand{\arraystretch}{1.5}
\begin{array}{|l|l|l|l||l|l||l|l|}
\hline
&\left|3\ell - \sum_{i=1}^6 p_i\right|  & \left|-K_X\right| & \left|-K_{X_s}\right| & \text{Max}   \\
&\text{on~$\P^2$}  & \text{on~$X$}  & \text{on~$X_s$}  & \text{nb. of pts}\\
\hline\hline
1
&\ell_{124} \cup \ell_{235} \cup \ell_{136}
&\widetilde{\ell}_{124}\cup \widetilde{\ell}_{235}\cup \widetilde{\ell}_{136}\cup \widetilde{E}_1\cup \widetilde{E}_2\cup \widetilde{E}_3\cup E_4 \cup E_5 \cup E_6
& \plongement_*(E_4) \cup \plongement_*(E_5) \cup \plongement_*(E_6) & 0 
\\
\hline
2
&c_{123456}
&\widetilde{c}_{123456}
&\plongement_*(\widetilde{c}_{123456})
&\Nqg{q}{1}
\\
\hline
\end{array}}
$$
The roots of~$X$ are~$\widetilde{\ell}_{124}$, $\widetilde{E}_2$ (mapped to a singular point~$s\in X_s$),
$\widetilde{\ell}_{235}$, $\widetilde{E}_3$ (mapped to a singular point~$s^{\sigma}\in X_s$), and~$\widetilde{\ell}_{136}$, $\widetilde{E}_1$
(mapped to a singular point~$s^{\sigma^2}\in X_s$). The curves~$E_i$, $i=4,5,6$, are not defined over~$\F_q$ and do not contain any
rational point. In conclusion~$N_q\left(-K_{X_s}\right) \leq N_q(1)$.

Since the points~$p_1,\ldots,p_6$ are not rational the blowing ups do not add any rational point, and since the singular points
are not rational the contractions do not add any rational point also. Thus~$\#X_s(\F_q) = q^2+q+1$, this number is always
strictly greater than~$N_q(1)$ and we deduce the parameters given below.

\begin{prop}
The weak del Pezzo surface of degree~$3$ associated to the configuration specified at the beginning of this section has
parameters~$[q^2+q+1,4,\geq q^2+q+1-\Nqg{q}{1}]$.
\end{prop}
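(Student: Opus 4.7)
The three parameters will be extracted from the three items of Proposition~\ref{propParameters} applied to the anticanonical code $\Ccal_{X_s}(-K_{X_s},X_s(\F_q))$, once the injectivity of the evaluation map is checked. The length and the upper bound on $N_q(-K_{X_s})$ have essentially been established in the text preceding the proposition; what remains is to put them together.

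\medbreak

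First I would justify the length. Since $p_1,\ldots,p_6$ are all of positive degree over $\F_q$, the two successive blow-ups from $\P^2$ to $X$ do not add rational points, so $\#X(\F_q)=\#\P^2(\F_q)=q^2+q+1$. The anticanonical morphism $\plongement\colon X\to X_s$ contracts the six irreducible effective roots $\widetilde{\ell}_{124},\widetilde{\ell}_{235},\widetilde{\ell}_{136},\widetilde{E}_1,\widetilde{E}_2,\widetilde{E}_3$; none of them is defined over $\F_q$ and the three resulting singular points $s,s^\sigma,s^{\sigma^2}$ are conjugate. Therefore $\plongement$ identifies triples of conjugate points on each of the two connected components of the root system on $X$, and a direct count shows $\#X_s(\F_q)=q^2+q+1$ as well.

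\medbreak

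Next, the dimension. For any weak del Pezzo surface $X$ of degree $d$ with anticanonical model $X_s$, one has $\dim H^0(X_s,-K_{X_s})=\dim H^0(X,-K_X)=d+1$ (Demazure), so here the target space has dimension $4$. I would then check the injectivity of the evaluation map: this is guaranteed as soon as $\#X_s(\F_q)>N_q(-K_{X_s})$, because the only way injectivity can fail is that a non-trivial global section vanishes on all rational points, i.e.\ defines a curve of the linear system containing every rational point. Using the case analysis summarised in the tabular just above, $N_q(-K_{X_s})\leq N_q(1)\leq q+1+\lfloor 2\sqrt q\rfloor$, so the inequality $q^2+q+1>N_q(1)$ holds for every $q$, and the evaluation map is indeed injective. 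The dimension of the code is thus exactly $4$.

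\medbreak

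Finally, the minimum distance. By Proposition~\ref{propParameters}\,(iii), $\dmin\geq \#X_s(\F_q)-N_q(-K_{X_s})$. The key input is the inequality $N_q(-K_{X_s})\leq N_q(1)$, which has just been proved in the preceding paragraph using the tabular of decompositions: line~$1$ (the union $\ell_{124}\cup\ell_{235}\cup\ell_{136}$) produces $0$ rational points on $X_s$ (all components are either contracted to non-rational singular points or undefined over $\F_q$), and line~$2$ corresponds to an absolutely irreducible elliptic curve whose number of points is bounded by $N_q(1)$. Combining these inputs yields $\dmin\geq q^2+q+1-N_q(1)$, completing the proof. The one delicate point in this proof plan is simply verifying that the case analysis of the two admissible decomposition types is exhaustive, which is precisely what was done using the listing of possible Galois orbits of lines, conics, and cubics passing through the $p_i$'s; the rest is bookkeeping.
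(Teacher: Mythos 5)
Your proposal is correct and follows essentially the same route as the paper: the non-rational blown-up points and non-rational singular points give length $q^2+q+1$, the two admissible decomposition types (the triple of conjugate lines with $0$ rational points and the absolutely irreducible cubic) give $N_q\left(-K_{X_s}\right)\leq N_q(1)$, injectivity follows from $q^2+q+1>N_q(1)$, and Proposition~\ref{propParameters} yields the stated parameters. One small slip: the effective roots form three (not two) connected components, each a pair of meeting roots contracted to one of the three conjugate $\mathbf{A}_2$ singular points; since these points are not rational, the union of the roots carries no rational point and the contraction neither adds nor removes rational points, which is the cleaner way to justify the length.
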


Koshelev \cite[\S1.1]{Koshelev} proves that the minimum distance can be improved by~$1$ for some~$q$ since he shows that
cubics of the considered linear system must have a $3$-torsion point.

\paragraph{Computation of the global sections from~$\P^2$.~---} Let~$L_{12},L_{23},L_{13}$ be the three conjugate linear forms that
respectively define the lines~$\ell_{124},\ell_{235},\ell_{136}$ in~$\P^2$. The family~$L_{12},L_{23},L_{13}$ is a $\overline{\F}_q$-basis
of~$H^0(\P^2,\ell)$, while the family of degree~$3$ monomials in~$L_{12},L_{23},L_{13}$ is a $\overline{\F}_q$-basis
of~$H^0(\P^2,3\ell)$. A cubic in this space can be written:
$$
a_{1}L_{12}^3+a_{2}L_{23}^3+a_{3}L_{13}^3
+b_{1}L_{12}L_{23}^2+c_{1}L_{13}L_{23}^2
+b_{2}L_{12}L_{13}^2+c_{2}L_{23}L_{13}^2
+b_{3}L_{13}L_{12}^2+c_{3}L_{23}L_{12}^2
+d L_{12}L_{23}L_{13}
$$
Such a cubic pass through~$p_1$ if and only if~$a_2=0$ (since~$p_1$ is a common zero of~$L_{12}$ and~$L_{13}$). In the same way it
passes through~$p_2$ and~$p_3$ if and only if~$a_3=0$ and~$a_1= 0$. Now passing through~$p_4$ means that if this cubic is not singular
at~$p_1$ then its tangent line at this point must be~$\ell_{12}$. After deshomogenizing by putting~$L_{23} = 1$ (this is possible
since~$L_{23}$ does not vanish at~$p_1$) this means that the linear component~$b_1L_{12}+c_1L_{13}$ should be proportional to~$L_{12}$;
necessarily~$c_1=0$. In the same way passing through~$p_5$ (resp.~$p_6$) means that~$b_2=0$ (resp.~$c_3=0$). Finally, one has
$$
H^0\left(\P^2,3\ell-\textstyle\sum_{i=1}^6 p_i\right)
=
\left\langle L_{12}L_{23}^2,L_{23}L_{13}^2,L_{13}L_{12}^2,L_{12}L_{23}L_{13}\right\rangle_{\overline{\F}_q}
$$
In order to deduce a $\F_q$-base, we consider~$\theta$ any primitive element of~$\F_{q^3}$ over~$\F_q$. The linear independence of homomorphisms permits to prove that
the matrix~$(\sigma^i(\theta^j))_{1\leq i,j\leq 3}$ is invertible. Let us put:
\begin{align*}
&C_1 = L_{12}L_{23}^2+L_{23}L_{13}^2+L_{13}L_{12}^2
\\
&C_\theta = \theta L_{12}L_{23}^2+\sigma(\theta)L_{23}L_{13}^2+\sigma^2(\theta)L_{13}L_{12}^2
\\
&C_{\theta^2} = \theta^2 L_{12}L_{23}^2+\sigma(\theta^2)L_{23}L_{13}^2+\sigma^2(\theta^2)L_{13}L_{12}^2
\end{align*}
then~$C,C_\theta,C_{\theta^2}$ are defined over~$\F_q$, as the product~$L_{12}L_{23}L_{13}$ and one has:
$$
H^0\left(\P^2,3\ell-\textstyle\sum_{i=1}^6 p_i\right)
=
\left\langle C_1,C_\theta,C_{\theta^2},L_{12}L_{23}L_{13}\right\rangle_{\F_q}
$$
The birational morphism
$$
\begin{array}{ccc}
\P^2 & \dashrightarrow & \P^4\\
%(X,Y,Z)(X:Y:Z) & \longmapsto & \left(C_1(X,Y,Z):C_\theta(X,Y,Z):C_{\theta^2}(X,Y,Z):L_{12}(X,Y,Z)L_{23}(X,Y,Z)L_{13}(X,Y,Z)\right)
(X:Y:Z) & \longmapsto & \left(C_1:C_\theta:C_{\theta^2}:L_{12}L_{23}L_{13}\right)
\end{array}
$$
has~$X_s$ as image in~$\P^4$. Thus, if~$r_1,\ldots,r_{q^2+q+1}$ denote the rational points of~$\P^2$, one of the generating matrix of this code
is nothing else than:
\begin{align*}
\begin{pmatrix}
C_1(r_1) & \cdots & C_1(r_{q^2+q+1})
\\
C_\theta(r_1) & \cdots & C_\theta(r_{q^2+q+1})
\\
C_{\theta^2}(r_1) & \cdots & C_{\theta^2}(r_{q^2+q+1})
\\
L_{12}L_{23}L_{13}(r_1) & \cdots & L_{12}L_{23}L_{13}(r_{q^2+q+1})
\end{pmatrix}
\end{align*}

%Here is a {\tt magma} program that implements the construction of these codes.
%\lstset{frame=lines,linewidth=13cm,basicstyle=\tiny}
%\lstset{numbers=left, numberstyle=\tiny, stepnumber=2, numbersep=5pt}
%%\lstset{frame=lines,basicstyle=\small,linewidth=16cm,language=Maple,keywordstyle=\bfseries}
%%\lstinputlisting[firstline=1,lastline=42]{maple/Shanks.maple}
%%\lstinputlisting[title = {\large\bf Deg3Type76.magma}]{Deg3Type76.magma}
%\begin{center}
%\lstinputlisting{Degre3Type76.magma}  
%\end{center}

\bibliographystyle{amsalpha}
%\bibliography{CodesConstructions}

\newcommand{\etalchar}[1]{$^{#1}$}
\providecommand{\bysame}{\leavevmode\hbox to3em{\hrulefill}\thinspace}
\providecommand{\MR}{\relax\ifhmode\unskip\space\fi MR }
% \MRhref is called by the amsart/book/proc definition of \MR.
\providecommand{\MRhref}[2]{%
  \href{http://www.ams.org/mathscinet-getitem?mr=#1}{#2}
}
\providecommand{\href}[2]{#2}

\end{document}